\newcommand*{\DashedArrow}[1][]{\mathbin{\tikz [baseline=-0.25ex,-latex, dashed,#1] \draw [#1] (0pt,0.5ex) -- (1.3em,0.5ex);}}%
\newlength{\matrixheight}
\newtheorem{thm}{Theorem}
\newtheorem{lem}[thm]{Lemma}
\newtheorem{cor}[thm]{Corollary}
\theoremstyle{definition}
\newtheorem{dfn}[thm]{Definition}
\newtheorem{rem}[thm]{Remark}
\newtheorem{que}[thm]{Question}
\DeclareMathOperator{\CH}{CH}
\DeclareMathOperator{\Hom}{Hom}
\DeclareMathOperator{\shHom}{\underline{Hom}}
\DeclareMathOperator{\hcf}{hcf}
\DeclareMathOperator{\Pic}{Pic}
\DeclareMathOperator{\Effbar}{\overline{Eff}}
\DeclareMathOperator{\gr}{gr}
\newcommand{\oo}{\mathcal{O}}
\newcommand{\QQ}{\mathbb{Q}}
\newcommand{\HH}{\mathbb{H}}
\newcommand{\RR}{\mathbb{R}}
\newcommand{\PP}{\mathbb{P}}
\newcommand{\ZZ}{\mathbb{Z}}
\newcommand{\NN}{\mathbb{N}}
\newcommand{\FF}{\mathbb{F}}
\newcommand{\CC}{\mathbb{C}}
\renewcommand{\AA}{\mathbb{A}}
\newcommand{\GG}{\mathbb{G}}
\newcommand{\TT}{\mathbb{T}}
\newcommand{\cE}{\mathcal{E}}
\newcommand{\cD}{\mathcal{D}}
\DeclareMathOperator{\Gm}{\CC^\times}
\begin{document}

\bibliographystyle{plain}

\title{Hyperelliptic Integrals and Mirrors of\\ the Johnson--Koll\'ar del Pezzo Surfaces}

\author{Alessio Corti\footnote{a.corti@imperial.ac.uk} }
\author{Giulia Gugiatti\footnote{g.gugiatti16@imperial.ac.uk}}
\affil{Department of Mathematics, Imperial College London, London SW7 2AZ}

\date{25\textsuperscript{th} January, 2019}

\maketitle

\begin{abstract} \noindent For all $k>0$ integer, we show explicitly that the hypergeometric function
\[  
\widehat{I}_k(\alpha)=\sum_{j=0}^\infty
\frac{\bigl((8k+4)j\bigr)!j!}{(2j)!\bigl((2k+1)j\bigr)!^2 \bigl((4k+1)j\bigr)!} \ \alpha^j
\]
is a period of a pencil of curves of genus $3k+1$. The function $\widehat{I}_k$ is the regularised $I$-function of the family of anticanonical del Pezzo hypersurfaces $X=X_{8k+4} \subset \PP(2,2k+1,2k+1,4k+1)$ and the pencil we construct is a candidate LG mirror of the elements of the family. The surfaces $X$ were first constructed by Johnson and Koll\'ar~\cite{MR1821068}. The main feature of these surfaces, which makes the mirror construction especially interesting, is that $|-K_X|=|\oo_X (1)|=\emptyset$; thus, there is no way to form a Calabi--Yau pair $(X,D)$ out of $X$. We also discuss the connection between our constructions and the work of Beukers, Cohen and Mellit \cite{MR3613122} on hypergeometric functions.
\end{abstract}

\tableofcontents{}
 
\section{Introduction}
\label{sec:introduction}

We begin by stating our results; then we briefly comment on the context
and on the methods of our proofs; we conclude with a few open questions. 

\subsection{Results}
\label{sec:results}
We state our results; more detail will be given in Section~\ref{sec:series} and in Section~\ref{sec:relation}.
\smallskip

For all $k>0$ integer, consider the hypergeometric function defined by
the power series:
\begin{equation}
\label{eq:I-function}  
\widehat{I}_k(\alpha)=\sum_{j=0}^\infty
\frac{\bigl((8k+4)j\bigr)!j!}{(2j)!\bigl((2k+1)j\bigr)!^2 \bigl((4k+1)j\bigr)!} \ \alpha^j
\end{equation} 
The function $\widehat{I}_k$ satisfies a hypergeometric
differential equation of order $6k+2$, \emph{see} Remark~\ref{rem:sing-rank}.

Conjecturally $\widehat{I}_k$ is (a shift of) Givental's
$\widehat{G}$-function --- a generating function of certain
Gromov--Witten invariants, \emph{see} Remark~\ref{rem:Gfunction} --- of the family of
anticanonical del Pezzo hypersurfaces 
\begin{equation} \label{eq:0a}
X_{8k+4} \subset \PP(2,2k+1,2k+1,4k+1) 
\end{equation}
These surfaces
form the main series of the classification of anticanonical
quasi-smooth and well-formed two-dimensional weighted hypersurfaces
\cite{MR1821068}, \emph{see} Section~\ref{sec:surfaces}.
\smallskip

In this paper we give positive answers to the following questions
which, as explained in Section~\ref{sec:mirrorsymmetry}, for us are
equivalent:
\begin{que}
Is $\widehat{I}_k$ a period of a pencil of curves?
\end{que}

\begin{que}
Does the family of anticanonical del Pezzo hypersurfaces of
Equation~\eqref{eq:0a} have a Landau--Ginzburg (LG) mirror?
\end{que}
\smallskip

For all $k>0$ integer, our answer to both questions is given by the
one-parameter family of hyperelliptic curves:
\begin{equation}
  \label{eq:1a}
 Y_k = \left(\alpha y^2 -h_{k,\alpha}(t_0, t_1) =0\right)\subset
 \PP(1,1,3k+2)\times \CC^\times  
\end{equation}
where $t_0, t_1, y$ are coordinates on $\PP(1,1,3k+2)$, $\alpha$ is
a coordinate on $\CC^\times$ and
\begin{equation}
\label{eq:1b} 
h_{k,\alpha}(t_0,t_1)=t_1 (4t_1^{2k+1}+\alpha t_0^{2k+1})
\bigl(-64t_1^{4k+2}+t_0t_1^{4k+1}-32\alpha
t_0^{2k+1}t_1^{2k+1}-4\alpha^2 t_0^{4k+2}\bigr) 
\end{equation}
together with the projection $w_k\colon Y_k\to \CC^\times$ to the second factor.

\begin{rem}
\label{rem:hyp}
Let 
\begin{equation}
  \label{eq:alpha0}
\alpha_{k,0}=\frac{{(4k+1)}^{4k+1}}{  4^{8k+3} {(2k+1)}^{2(2k+1)}  }
\end{equation}
For $\alpha\neq \alpha_{k,0}$, the fibre
  $Y_{k,\,\alpha}=w_k^{-1}(\alpha)$ is a nonsingular hyperelliptic curve of
  genus $3k+1$.  
\end{rem}

Our first result is:
\begin{thm}[Main Theorem]
\label{thm:main_theorem} For all $k>0$ integer, $\widehat{I}_k (\alpha)=\pi_k(\alpha)$, where
\begin{enumerate}[(A)]
\item $\widehat{I}_k(\alpha)$ is the hypergeometric function of Equation~\eqref{eq:I-function} above, and 
\item  $\pi_k(\alpha)$ is the period of the family $Y_k$ \eqref{eq:1a} given by:
\begin{equation} \label{eq:period}
\pi_k(\alpha) = \frac{1}{2\pi \text{\texttt{i}}} \oint_{\gamma_{k,\alpha}} \frac{ t^{2k}\,dt}{y }   
\end{equation}
where $\gamma_{k, \alpha}\subset Y_{k,\,\alpha}$  is the explicit cycle described in Section~\ref{sec:periods} and $t=t_1/t_0$. 
\end{enumerate}
\end{thm}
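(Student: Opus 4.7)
The strategy is to compute $\pi_k(\alpha)$ as a formal power series in $\alpha$ about $\alpha=0$ and match it term-by-term to $\widehat{I}_k(\alpha)$. The starting observation is the clean factorisation
$$h_{k,\alpha}(1,t) \;=\; t^{6k+3}\,u\,(1-4tu^{2}), \qquad u := 4 + \alpha\,t^{-(2k+1)},$$
which is just the identity $1 - 64t - 32ts - 4ts^{2} = 1 - 4t(s+4)^{2}$ applied with $s = \alpha\,t^{-(2k+1)}$ to rewrite the cubic factor of Equation~\eqref{eq:1b}. This factorisation is perfectly adapted to the rescaling $\tau = t\,\alpha^{-1/(2k+1)}$, which absorbs all fractional powers of $\alpha$ and (after a short calculation using $\alpha = t^{2k+1}(u-4)$) yields the compact formula
$$\frac{t^{2k}\,dt}{y} \;=\; \frac{d\tau}{\tau\,\sqrt{(1+4\tau^{2k+1})\bigl(1 - \alpha^{1/(2k+1)}R(\tau)\bigr)}}, \qquad R(\tau) := 64\tau + 32\tau^{-2k} + 4\tau^{-(4k+1)}.$$
The next step is to translate the cycle $\gamma_{k,\alpha}$ into the $\tau$-coordinate; the essential geometric input from Section~\ref{sec:periods} is that it projects to a loop picking out the $\ZZ/(2k+1)$-invariant part of the integrand, equivalently, isolating the residue at $\tau = 0$.

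Expanding now both square roots as binomial series and $R(\tau)^{n}$ by the trinomial theorem, the period becomes a sum over $4$-tuples $(m,a,b,c)$ (with $a+b+c = n$) of explicit multinomial coefficients times monomials in $\tau$ and $\alpha^{1/(2k+1)}$. Imposing the $\tau^{-1}$-residue condition $(2k+1)m + a - 2kb - (4k+1)c = 0$ forces $n = (2k+1)j$, and hence automatically delivers integer powers $\alpha^{j}$. Parametrising the solutions by $a = 2kj + c - m$, $b = j + m - 2c$, one obtains the closed-form double sum
$$c_{k,j} \;=\; \frac{\bigl((4k+2)j\bigr)!}{\bigl((2k+1)j\bigr)!^{2}\,4^{(2k+1)j}}\sum_{m,c \ge 0}\frac{(-1)^{m}(2m)!}{(m!)^{2}}\binom{(2k+1)j}{a,b,c}\,64^{a}\,32^{b}\,4^{c}$$
for the coefficient of $\alpha^{j}$ in $\pi_k(\alpha)$. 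The trivial case $j=0$ (forcing $m=a=b=c=0$) gives $c_{k,0}=1$, agreeing with the constant term of $\widehat{I}_k$.

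\textbf{Main obstacle.} The final and principal task is to verify the hypergeometric identity
$$c_{k,j} \;=\; \frac{\bigl((8k+4)j\bigr)!\,j!}{(2j)!\,\bigl((2k+1)j\bigr)!^{2}\,\bigl((4k+1)j\bigr)!}.$$
I expect this to follow from iterated application of the Gauss multiplication theorem for $\Gamma$, which is precisely tailored to collapse multinomial products of factorials into single factorial ratios, combined with a Vandermonde- or Saalsch\"utz-type summation for the remaining terminating hypergeometric sum; a comparison with the Clausen/Beukers--Cohen--Mellit framework cited in the abstract may short-circuit this combinatorics. An alternative, possibly cleaner route is to show by differentiating under the integral sign (using a Griffiths--Dwork reduction on the pencil $Y_k$) that $\pi_k(\alpha)$ satisfies the rank-$(6k+2)$ hypergeometric ODE of Remark~\ref{rem:sing-rank}, and then to match only the initial value $c_{k,0}=1$; this trades the combinatorial identity for a direct but computationally delicate analytic verification.
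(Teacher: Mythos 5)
Your set-up is sound and, up to bookkeeping, is the paper's own: the factorisation $h_{k,\alpha}=t^{6k+3}u(1-4tu^{2})$ with $u=4+\alpha t^{-(2k+1)}$ is exactly the manipulation that puts the period into the form \eqref{eq:period_simple}; your rescaled expression for $t^{2k}\,dt/y$ is correct; the residue condition does force $n=(2k+1)j$, so only integer powers of $\alpha$ survive; and your parametrisation $a=2kj+c-m$, $b=j+m-2c$ and the resulting double sum for $c_{k,j}$ check out. But the proof is not complete: the identity $c_{k,j}=\frac{\bigl((8k+4)j\bigr)!\,j!}{(2j)!\bigl((2k+1)j\bigr)!^{2}\bigl((4k+1)j\bigr)!}$, which you yourself flag as the ``main obstacle'', is the entire content of the theorem at the level of coefficients, and you only indicate a hope that it follows from Gauss multiplication plus a Vandermonde/Saalsch\"utz evaluation; no such evaluation is carried out, and beyond $j=0$ nothing is verified. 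The fallback you sketch (Griffiths--Dwork to show $\pi_k$ satisfies the order-$(6k+2)$ hypergeometric ODE, then match $c_{k,0}=1$) is also incomplete as stated: matching one coefficient identifies $\pi_k$ with $\widehat{I}_k$ only after proving that $\pi_k$ is holomorphic at $\alpha=0$ and that the holomorphic solutions of $H^{\mathrm{red}}_k$ at $0$ form a one-dimensional space, and the Griffiths--Dwork reduction for this pencil is itself a substantial computation you have not done. A smaller omission: you invoke the cycle from Section~\ref{sec:periods} but never check that both binomial expansions converge on it; this is exactly where the sandwich $|\alpha|^{2/(4k+1)}\ll\rho_k(|\alpha|)\ll|\alpha|^{1/(2k+1)}$ and Lemma~\ref{lem:asymptotics} on the roots of $h_{k,\alpha}$ are needed.

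The gap is repairable inside your own framework, and the repair recovers the paper's argument. Your trinomial is a disguised square: $R(\tau)=4\tau^{-(4k+1)}\bigl(1+4\tau^{2k+1}\bigr)^{2}$. If, instead of expanding $R(\tau)^{n}$ trinomially, you write $\bigl(1-\alpha^{1/(2k+1)}R(\tau)\bigr)^{-1/2}=\sum_{n}\binom{-1/2}{n}(-4)^{n}\alpha^{n/(2k+1)}\tau^{-(4k+1)n}\bigl(1+4\tau^{2k+1}\bigr)^{2n}$ and expand the finite power $\bigl(1+4\tau^{2k+1}\bigr)^{2n}$ binomially, the residue at $\tau=0$ forces $n=(2k+1)j$ and leaves a \emph{single} finite sum $\sum_{p}\binom{-1/2}{(4k+1)j-p}\binom{(4k+2)j}{p}$, which Chu--Vandermonde evaluates to $\binom{(4k+2)j-1/2}{(4k+1)j}$; the remaining equality reduces to the double-factorial identity $(2l)!=2^{l}\,l!\,(2l-1)!!$ applied twice. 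This is precisely how the paper closes the computation (its identity \eqref{eq:coefficients}), the only difference being that the paper keeps the square $(8+2\alpha/t^{2k+1})^{2}$ intact from the start, which is what makes the single Chu--Vandermonde application available. As written, your double sum over $(m,c)$ with a genuine trinomial coefficient is not evaluated, so the proposal stops one essential step short of a proof.
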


The solutions of the differential equation satisfied by
$\widehat{I}_k$ are sections of an irreducible complex local system on
$U_k=\PP^1\setminus \{0,\alpha_{k,0},\infty\}$, which we denote by
$\HH_k^\text{red}$, \emph{see} Remark~\ref{rem:sing-rank}. 

Write $Y_{U_k}=w_k^{-1}(U_k)$ and denote by $w_{U_k}\colon Y_{U_k}\to
U_k$ the restriction. 

By the main Theorem and Remark~\ref{rem:local_systems}
$\HH_k^\text{red}$ is a subquotient of the local system
$R^1w_{U_k \, \star} \CC$. Since the two local systems have the same rank, we
obtain:

\begin{cor}
  \label{cor:1} For all $k>0$ integer, 
   $\HH_k^\text{red}$ is isomorphic to $R^1w_{U_k \, \star} \CC$. 
\end{cor}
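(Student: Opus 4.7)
The plan is a short rank count that uses the Main Theorem as a black box. First I would record that, by Remark~\ref{rem:sing-rank}, the local system $\HH_k^\text{red}$ has rank $6k+2$, namely the order of the hypergeometric ODE satisfied by $\widehat{I}_k$; and that, by Remark~\ref{rem:hyp}, every fibre of $w_{U_k}$ is a smooth hyperelliptic curve of genus $3k+1$, so the stalks of $R^1 w_{U_k \, \star}\CC$ have dimension $2(3k+1)=6k+2$. The two local systems therefore have the same rank.

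Next I would use the Main Theorem together with Remark~\ref{rem:local_systems} to exhibit sub-local systems $B \subseteq A \subseteq R^1 w_{U_k \, \star}\CC$ with $A/B \cong \HH_k^\text{red}$. Because $U_k$ is connected and local systems are locally constant sheaves of finite-dimensional $\CC$-vector spaces, rank is additive in short exact sequences, so
\[ \text{rank}(A) - \text{rank}(B) = \text{rank}(\HH_k^\text{red}) = 6k+2 = \text{rank}(R^1 w_{U_k \, \star}\CC). \]
Combined with $\text{rank}(A) \le 6k+2$, this forces $\text{rank}(B) = 0$ and $\text{rank}(A) = 6k+2$. A local system of rank zero on a nonempty connected base is the zero sheaf, and a sub-local system of full rank inside a local system on a connected base must coincide with the ambient sheaf; hence $B = 0$ and $A = R^1 w_{U_k \, \star}\CC$, yielding the required isomorphism $\HH_k^\text{red} \cong R^1 w_{U_k \, \star}\CC$.

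No step is a genuine obstacle: all the mathematical content already sits in the Main Theorem and in the matching of ranks. The only subtlety to invoke is Remark~\ref{rem:local_systems}, which promotes the period identity $\widehat{I}_k(\alpha)=\pi_k(\alpha)$ from an equality of holomorphic functions to the monodromy-level statement that $\HH_k^\text{red}$ is realised as a subquotient of $R^1 w_{U_k \, \star}\CC$; once that is granted, the corollary reduces to the elementary rank argument above.
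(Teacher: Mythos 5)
Your proof is correct and follows essentially the same route as the paper: the Main Theorem plus Remark~\ref{rem:local_systems} realise $\HH_k^\text{red}$ as a subquotient of $R^1w_{U_k\,\star}\CC$, and the rank count $6k+2 = 2(3k+1)$ (Remarks~\ref{rem:sing-rank} and~\ref{rem:hyp}) forces the subquotient to be the whole local system. Your write-up merely makes explicit the elementary rank-additivity step that the paper leaves implicit.
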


Our next result connects our construction to the work of Beukers,
Cohen and Mellit \cite{MR3613122}. More detail on this discussion can
be found in Section~\ref{sec:relation}.  For all $k>0$
integer, consider the manifold:
\begin{equation}
  \label{eq:2a}
  W_k = \left( \alpha \cdot (u_1+u_2+u_3+u_4-1) -
    {u_1}^2{u_2}^{2k+1}{u_3}^{2k+1}{u_4}^{4k+1}=0\right) \subset
  \TT^4\times \CC^\times 
\end{equation}
where $\TT^4\cong (\CC^\times)^4$ is a $4$-dimensional torus with coordinates
$u_1,\dots, u_4$ and $\alpha$ is a coordinate on $\CC^\times$. Denote
by $\upsilon_k\colon W_k \to \CC^\times$ the second projection. The
3-dimensional hypersurface
$W_{k, \alpha} =\upsilon_k^{-1}(\alpha)\subset \TT^4$ is nonsingular if
and only if $\alpha\neq \alpha_{k,0}$.

A very special case of the work~\cite{MR3613122} relates point
counting in characteristic $p$ on $W_{k,\alpha}$ to finite analogs of
the function $\widehat{I}_k$ of Equation \eqref{eq:I-function}. This
result strongly suggests that the hypergeometric function
$\widehat{I}_k$ is a period of the family of \mbox{$3$-folds}
$W_{k,\alpha}$ ($\alpha \in U_k$) or, equivalently, that the variation
$H^3(W_{k,\alpha},\QQ)$ is related to the local system
$\HH_{k}^\text{red}$.\footnote{It would not be very difficult --- but
  it would take us too far --- to prove that $\widehat{I}_k$ is a
  period of the family of Equation~\eqref{eq:2a}.} In
Section~\ref{sec:relation} we prove:

\begin{thm}
  \label{thm:2a} For all $k>0$ integer, write 
  $W_{U_k}=\upsilon_k^{-1}(U_k)$ and
  denote by $\upsilon_{U_k}\colon W_{U_k}\to U_k$ the restriction. Then:  
\begin{equation}  \label{eq:idea} 
\mathrm{gr}_3^W R^3 \upsilon_{U_k\, !} \QQ\, (1)=R^1w_{U_k \, \star} \QQ
\end{equation} 
\end{thm}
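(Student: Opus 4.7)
The plan is to identify both sides with the irreducible complex hypergeometric local system $\HH_k^\text{red}$ and then descend to $\QQ$. For the right hand side of \eqref{eq:idea} this is Corollary~\ref{cor:1}; for the left hand side I would split the argument into (a) exhibiting $\HH_k^\text{red}$ as a subquotient of $\mathrm{gr}_3^W R^3 \upsilon_{U_k\,!}\CC(1)$, and (b) a rank count showing that this subquotient is everything.

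For (a), I would carry out the period computation on $W_k$ foreshadowed by the footnote. Integrating the Poincar\'e residue on $W_{k,\alpha}$ of the logarithmic top form $du_1\wedge du_2\wedge du_3\wedge du_4/(u_1u_2u_3u_4)$ over a real $3$-torus cycle, expanding in $\alpha$, and applying Griffiths--Dwork as in the proof of Theorem~\ref{thm:main_theorem}, identifies the Picard--Fuchs operator as the rank-$(6k+2)$ hypergeometric operator annihilating $\widehat{I}_k$. Because the integrand has top Hodge type $F^3$, the corresponding subquotient sits inside the pure weight-$3$ part, giving an inclusion $\HH_k^\text{red}\hookrightarrow \mathrm{gr}_3^W R^3 \upsilon_{U_k\,!}\CC(1)$.

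For (b), since $W_{k,\alpha}\subset\TT^4$ is a very affine hypersurface with Newton polytope $N_k=\mathrm{conv}\{0,e_1,e_2,e_3,e_4,(2,2k+1,2k+1,4k+1)\}$, the rank of $\mathrm{gr}_3^W H^3_c(W_{k,\alpha})$ can be read off from $N_k$ by the Danilov--Khovanskii formula (equivalently, from the Jacobian ring of the defining Laurent polynomial). The answer $6k+2$ matches the rank of $\HH_k^\text{red}$, and irreducibility promotes the inclusion from (a) to an isomorphism $\mathrm{gr}_3^W R^3 \upsilon_{U_k\,!}\CC(1)\cong R^1 w_{U_k\,\star}\CC$. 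Both sides of \eqref{eq:idea} are pure polarised $\QQ$-variations of Hodge structure of weight $1$ whose complexifications coincide with the rigid irreducible hypergeometric local system, so the descent of the isomorphism to $\QQ$ is automatic.

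The main obstacle is step (b): the Newton polytope $N_k$ is asymmetric and non-reflexive, so one has to build a sufficiently fine toric compactification and resolution of $W_{k,\alpha}$ before the Deligne weight spectral sequence cleanly isolates $\mathrm{gr}_3^W$. The final count $6k+2$ is forced by the hypergeometric side of the problem, but the combinatorial bookkeeping is the technical heart of the argument; step (a) and the descent are essentially mechanical by comparison.
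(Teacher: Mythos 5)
Your strategy is genuinely different from the paper's: you propose to identify both sides of \eqref{eq:idea} with the irreducible hypergeometric system $\HH_k^{\mathrm{red}}$ over $\CC$ and then descend to $\QQ$, whereas the paper constructs the partial compactification $\widehat{W}$ with its degree-$2$ del Pezzo fibration $\phi$, reduces via Lemmas~\ref{lem:ptcpt} and~\ref{lem:onthebase} to $H^1_c(\CC^\times, R^2\phi_\star\QQ)$, and produces in Lemma~\ref{lem:final} an algebraic cycle $Z$ inducing the isomorphism directly over $\QQ$, indeed as mixed sheaves. Your outline could in principle be carried out, but as written it has two genuine gaps. First, step (b), which you yourself call the technical heart, is asserted rather than proved, and the stated justification is circular: nothing on the hypergeometric side ``forces'' $\gr^W_3 H^3_c(W_{k,\alpha},\QQ)$ to have rank exactly $6k+2$; a priori it could strictly contain the rank-$(6k+2)$ subquotient produced in (a), and then your conclusion fails. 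You must actually perform the Danilov--Khovanskii computation for $N_k$ (checking $\Delta$-regularity of the fibre, or reducing to a generic $\alpha$ by local constancy of $\gr^W_3 R^3\upsilon_{U_k\,!}\QQ$); the paper's del Pezzo fibration and the cycle $Z$ are precisely what replaces this computation. Second, the weight-pinning in (a) does not follow from ``the integrand has Hodge type $F^3$'': the Hodge and weight filtrations are different filtrations, and the irreducible subquotient $\HH_k^{\mathrm{red}}$ of the Gauss--Manin $\cD$-module generated by the residue class could a priori sit in any $\gr^W_m$. What pins it to $m=3$ is, for instance, that the graded pieces of weight $<3$ of $H^3_c$ of a nondegenerate torus hypersurface are of Hodge--Tate type, hence have finite monodromy, and so cannot contain an irreducible rank-$(6k+2)$ local system with infinite monodromy; one should also keep track of the duality between the solution local system, where the torus-cycle period naturally lives, and $R^3\upsilon_{U_k\,!}\CC$ (here the self-duality of the weight-one hypergeometric system rescues you, but it must be said).

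The descent to $\QQ$ is also more delicate than ``automatic by rigidity'': isomorphic complexifications do not in general force isomorphic $\QQ$-forms. What you need is the statement the paper itself invokes, that an irreducible complex local system supports at most one rational (polarizable) VHS, \emph{see}~\cite[Proposition~2.1]{MR3484368}, together with the fact that $\gr^W_3 R^3\upsilon_{U_k\,!}\QQ(1)$ is a polarizable variation of weight one, as a graded piece of a geometric variation. Granting this, your argument yields an abstract isomorphism of $\QQ$-local systems, which does suffice for the statement of Theorem~\ref{thm:2a}, though it loses the cycle-theoretic nature of the identification that Lemma~\ref{lem:final} provides.
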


This gives an interpretation of a special case of the results in
\cite{MR3613122} in terms of mirror symmetry.

\subsection{Context and a few words on our proofs}
\label{sec:cont-furth-direct}
Our mirrors of the surfaces
$X=X_{8k+4}\subset \PP(2, 2k+1, 2k+1, 4k+1)$ are not covered by any
mirror construction known to us. Indeed, since
$H^0(X, -K_X)=H^0(X,\mathcal{O}_X(1))=(0)$, there is no Calabi--Yau
pair $(X,D)$, and hence the intrinsic mirror symmetry program
\cite{2016arXiv160900624G} is not applicable in this context.
\smallskip

By \cite[Theorem~5.4.4]{MR1081536} the local systems
$\HH^\text{red}_k$ support a canonical rational variation of Hodge
structures (VHS).\footnote{In fact, \cite{MR1081536} constructs an
  explicit geometric realisation of this VHS, different from the one
  of Equation~\eqref{eq:2a}.} 
  By the criterion of \cite{MR2824960,
  2015arXiv150501704F}, the variation has Hodge weight one. Thus, it
is natural to ask --- even as there is no reason to expect it --- if
$\HH^\text{red}_k$ is a (direct summand of) the variation of $H^1$ of
a one-parameter family of curves: this motivates our Question~1. Since
an irreducible local system supports at most one rational VHS,
\emph{see}~\cite[Proposition~2.1]{MR3484368}, our main Theorem implies
that $\HH^\text{red}_k=R^1w_{U_k\,\star} \QQ$ as VHS.
\smallskip

The general shape of the Fano/LG correspondence suggests that the
mirror of the family of anticanonical del Pezzo hypersurfaces
$X_{8k+4}$ is a function $w_k \colon Y_k\to \CC^\times$ with
one-dimensional fibres, smooth over $U_k$, together with an
identification of a subquotient of $R^1w_{U_k\, !} \CC$
with the hypergeometric local system of solutions of the differential equation
satisfied by $\widehat{I}_k$.  This motivates our Question~2.
\smallskip

Our proof of the main Theorem is elementary: we expand the period in
power series with the help of the residue theorem. The key difficulty
is to find the equation of $Y_k$ (and the integration cycles
$\gamma_k$).
\smallskip

The work \cite{MR3613122} and in particular the pencil of
\mbox{$3$-folds} $\upsilon_k\colon W_k \to \CC^\times$ of
Equation~\eqref{eq:2a} are the starting point for our
investigations. Morally, this work identifies the local system
$\HH^\text{red}_k$ with $\mathrm{gr}_3^W R^3\upsilon_{U_k\, !} \CC$
--- more detail on this point can be found in
Section~\ref{sec:relation} --- and thus provides a mirror \emph{of the
  wrong dimension}.  Then it is natural to ask if there is a morphism
$w_k\colon Y_k \to \CC^\times$ with one-dimensional fibres such that
$\mathrm{gr}_3^W R^3 \upsilon_{U_k\, !} \QQ\, (1)=R^1w_{U_k \, !}
\QQ$.
Our Theorem~\ref{thm:2a} states that our family of hyperelliptic
curves indeed has this property. The constructions in the proof of the
Theorem in Section~\ref{sec:proof} make it clear how the curve
$Y=Y_{k,\alpha}$ arises naturally from a study of the geometry of the
\mbox{$3$-fold} $W=W_{k, \alpha}$ $\forall \alpha \neq \alpha_{k, 0}$. The
fact that the variation $\mathrm{gr}_3^W H^3(W_{k,\alpha},\QQ)$ is the
variation of the $H^1$ of a pencil of curves is nontrivial. There is
no reason to expect it. Looking for an explanation, one is lead to
wonder whether the \mbox{$3$-folds} $W_{k, \alpha}$ are rational. In
Section~\ref{sec:conic-bundle-struct} we construct a conic bundle in
the Mori category, birational to $W_{k, \alpha}$. Interestingly, a
study of the geometry of this conic bundle suggests that
$W_{k,\alpha}$ is not rational, and that the conic bundle may even be
birationally rigid.

\smallskip

The idea of the proof of Theorem~\ref{thm:2a} is to construct a
partial compactification $ W \subset \widehat{W}$ with a del Pezzo
fibration $\phi \colon \widehat{W} \to \CC^\times$. The del Pezzo
fibration becomes visible after a monomial substitition of
coordinates, \emph{see} Equation~\eqref{eq:change}. In the Appendix, we
explain how we discovered the del Pezzo fibration by running a minimal
model program for a partial resolution of a compactification of
$W$. Once we have the del Pezzo fibration, the proof of
Theorem~\ref{thm:2a} is an exercise in mixed Hodge theory for which
models exist in the literature. The key point is
Lemma~\ref{lem:final}, which constructs an algebraic cycle
$Z\subset \CH_2 ( Y \times_{\CC^\times} \widehat{W} )$ inducing an
isomorphism $p_\star \QQ_{Y} \to R^2\phi_\star \QQ_{\widehat{W}}(1)$.
\smallskip

It is natural to ask if our computation of the cohomology of $W$ in
terms of the del Pezzo fibration
$\phi \colon \widehat{W}\to \CC^\times$ really is the easiest way to
prove Theorem~\ref{thm:2a}. We think that it is. The conic bundle
birational to $W$ that we construct in
Section~\ref{sec:conic-bundle-struct} gives another, more
complicated, way to understand the geometry of $W$.

\subsection{Further questions}
\label{sec:further-question}

It may be possible to determine all hypergeometric functions that are
periods of a pencil of curves, and describe the pencils explicitly. 
\smallskip 

There are several contexts where a construction of mirror symmetry is
available that produces a mirror of the wrong dimension: one of these
is general toric complete intersections \cite{Hori-Vafa, MR3613122};
another is the abelian/nonabelian correspondence \cite{MR2367022}. Our
ideas here can form the basis of a systematic method to extract from
these constructions mirrors of the correct dimension. In particular it
would be extremely attractive to eliminate the assumptions of
Hori--Vafa \cite{Hori-Vafa} and obtain mirrors for all toric complete
intersections.

\smallskip

It will be interesting to see that our mirrors satisfy Homological
Mirror Symmetry.

\subsection{Structure of the paper}
\label{sec:plan-paper}

All sections in this paper logically depend on the Introduction; other
than that, they are logically mutually independent and can be read (or
not read, as the case may be) in any order. 

The paper is structured as follows. In Section~\ref{sec:series} we
expand on the notion of mirror symmetry for Fano anticanonical
weighted hypersurfaces. The families of surfaces \eqref{eq:0a} are an example
of such hypersurfaces; our main Theorem can be interpreted as giving
mirrors of these families. In Section~\ref{sec:periods} we prove the
main Theorem.  In Section~\ref{sec:relation} we show how our
Equation~\eqref{eq:2a} arises from the work \cite{MR3613122} as a mirror
of the wrong dimension. In Section~\ref{sec:proof} we prove
Theorem~\ref{thm:2a}. In the Appendix we explain how we discovered the
del Pezzo fibration $\phi \colon \widehat{W}\to \CC^\times$. The
Appendix ends with the construction of a compactification
$W \subset W^\prime$ with a conic bundle structure
$\psi \colon W^\prime \to \FF_1$.

\subsection{Acknowledgments}
\label{sec:acknow}

It is a pleasure to thank Fernando Rodriguez Villegas for attracting our
attention to the work of Beukers, Cohen and Mellit \cite{MR3613122},
and Tom Coates and Don Zagier for helpful conversations.  A.C. is partially
supported by EPSRC Program Grant \emph{Classification, Computation and
  Construction: New Methods in Geometry}; G.G. is supported by
EPSRC-funded Centre for Doctoral Traning \emph{The London School of
  Geometry and Number Theory}.


\section{Anticanonical weighted hypersurfaces and mirror symmetry}
\label{sec:series}

In this Section we expand on the notion of mirror symmetry for Fano
anticanonical weighted hypersurfaces. The families of surfaces
$X=X_{8k+4}$ of the Introduction are an example of such hypersurfaces;
our main Theorem can be interpreted as giving mirrors of these
families.

\medskip

We denote by  $\PP(a_0, a_1, \dots , a_m)$ the weighted projective $m$-space with weights $a_0\leq a_1 \leq \cdots \leq a_m$; we simply write $\PP$ when $m$ and the weights are clear from the context.

For $X=X_d \in |\mathcal{O}_{\PP}(d)|$ a quasismooth~\cite[6.3]{MR1798982} and wellformed~\cite[6.10]{MR1798982} hypersurface of degree $d$, the adjuction formula for the canonical sheaf $K_X$ \cite[6.14]{MR1798982} states that
\[K_X = \mathcal{O}_X(d-\sum_{i=0}^ma_i) \,.
\] 
By definition $X$ is Fano if and only if $-K_X$ is ample, i.e., if and only if $d < \sum_{i=0}^ma_i$. We say that $X$ is anticanonical if $-K_X=\oo_X(1)$, that is, $d=\sum_{i=0}^ma_i-1$.

\subsection{Our notion of mirror symmetry}
\label{sec:mirrorsymmetry}
We state what we mean by mirror for a quasismooth wellformed Fano anticanonical weighted hypersurface 
$X=X_d \subset \PP(a_0, \dots a_m)$ of dimension $n=m-1$.

\begin{dfn}
  \label{dfn:1}
The regularised $I$-function of $X$ is defined as the hypergeometric series
  \begin{equation}
    \label{eq:3}
  \widehat{ I}_X(\alpha)= \sum_{j=0}^{\infty} \frac{ (d \, j)! \, j! }{(a_0j)! \dots (a_m j)!} \, \alpha^{ j} \quad \big(\alpha \in \CC \big)
\end{equation}
\end{dfn}

\begin{rem}
\label{rem:Gfunction}
The paper \cite[Sections B,C]{2013arXiv1303.3288C} defines the
$G$-function of $X$, a generating series for certain Gromov--Witten
invariants of $X$, and the $I$-function of $X$.\footnote{In
  \cite{2013arXiv1303.3288C} $X$ is a smooth 
  variety. Here we think of a quasi-smooth well-formed weighted
  hypersurface as a smooth Deligne--Mumford stack. The definitions of
  $I_X$ and ${G}_X$ make sense in this more general context.}  The
function $\widehat{I}_X$ \eqref{eq:3} is the Fourier transform of the
$I$-function of $X$.  Conjecturally,
${G}_X(\alpha)=e^{-c\alpha} {I}_X(\alpha)$, where $c$ is the only
rational number such that the right-hand side has the form
$1+O({\alpha}^2)$, \emph{see} \cite[Proposition
D.9]{2013arXiv1303.3288C}.
\end{rem}

\begin{rem}
\label{rem:Hypergeometric_Local_system}
Our functions $\widehat{I}_X$ satisfy a hypergeometric differential
equation on $\PP^1$, nonsingular outside
$\Sigma=\{0,\alpha_0,\infty\}$ where
$\alpha_0=\frac{\prod a_i^{a_i}}{d^d}$, whose solutions are the
sections of an irreducible complex local system on
$\PP^1\setminus \Sigma$ which we denote by $\HH_X^\text{red}$.  To be
a little more specific, write
\[ P_0(j)= - \prod_{i=0}^{m} (a_ij) (a_ij-1) \cdots (a_ij -a_i+1)   \quad \text{and} \quad P_1(j-1)=  j \ (d \, j)(d \, j -1)  \cdots (d \, j -d +1)
\]
Consider the differential operator $H_X= P_0(D) + \alpha P_1(D) \in \ZZ[\alpha, D]$ where $D=\alpha \frac{d}{d\alpha}$, and denote by $H^\text{red}_X$ the operator obtained removing from both $P_0(j)$ and $P_1(j-1)$ a copy of every common factor. By \cite[Corollary~3.2.1]{MR1081536} $H^\text{red}_X$ is irreducible and it is easy to see that $H^\text{red}_X\cdot \widehat{I}_X=0$.
\end{rem}

\begin{dfn} \label{dfn:LGmodel} A \emph{Landau--Ginzburg (LG) model} is a tuple $(Y^n, w, \omega, \gamma)$ where:
\begin{enumerate}[(i)]
\item $Y$ is a smooth algebraic manifold of dimension $n$;
\item $w\colon Y \to \CC^\times$ is a quasi-projective morphism. For $\alpha \in \CC^\times$ we denote by $Y_\alpha=w^{-1}(\alpha)$ the fibre. 
\end{enumerate}
Denote by $U\subset \CC^\times$ the set of regular values of $w$, and by $w_U\colon Y_U=w^{-1}(U) \to U$ the restriction.
\begin{enumerate}[(i)]
\item[(iii)] $\omega\in \Gamma (U, w_{U\, \star}\Omega^{n-1}_{Y_U/U})$. For $\alpha \in U$, we write $\omega_\alpha \in H^0(Y_\alpha, \Omega^{n-1}_{Y_\alpha})$ the corresponding form;
\item[(iv)] $\gamma \in \Gamma (D^\times,R^{n-1}w_{U\, !} \QQ_{Y_U})$ where $0\in D \subset \CC$ is a small disk and $D^\times =D\setminus \{0\}$. For $\alpha \in D^\times$ we denote by $\gamma_\alpha \in H_{n-1}(Y_\alpha, \QQ)$ the corresponding cycle.
\end{enumerate}
 The \emph{period} of the LG model is the function
\begin{equation} \label{eq:periodLG}
\pi(\alpha)=\int_{\gamma_\alpha} \omega_\alpha
 \quad (\alpha \in D^\times)
\end{equation}
\end{dfn}

\begin{dfn} \label{mirror} 
Let $X=X_{d} \subset \PP(a_0, \dots a_m)$ be a quasismooth wellformed Fano anticanonical weighted hypersurface of dimension $n=m-1$.
A LG model $(Y^n, w, \omega, \gamma)$ is \emph{mirror} of $X$ if for
all $\alpha$
\[
\widehat{I}(\alpha) = \pi(\alpha)
\]
\end{dfn}

\begin{rem}
  \label{rem:local_systems}
It follows directly from  Definition \ref{mirror} that $\HH_X^\text{red}$ is a subquotient of $R^{n-1} w_{U\, !} \CC$ and this fact could be taken as a weak version of mirror symmetry.

Indeed, write
\[
\cE=(R^{n-1}w_{U\, \star} \QQ)\otimes \oo_U.
\]
The sheaf $\cE$ carries an algebraic connection $\nabla \colon \cE \to \Omega^1_U \otimes \cE$ (the Gauss--Manin connection) by means of which we can regard it as a $\cD_U$-module, where $\cD_U$ denotes the sheaf of differential operators on $U,$ and
\[
R^{n-1}w_{U\, \star} \CC=\shHom_{\cD_U}(\oo_U, \cE)\,,
\quad\text{and}\quad
R^{n-1}w_{U\, !} \CC=\shHom_{\cD_U}(\cE, \oo_U)
\]
are the local systems of flat sections and of solutions of $\cE$. Now $w_{U\,\ast}\Omega^{n-1}_{Y_U/U}\subset \cE$ as the last piece of the Hodge filtration and we regard $\omega$ as a section of $\cE$ by means of this inclusion. On the other hand we regard $\gamma$ as a solution of $\cE$ and recover the period as $\pi=\gamma (\omega)$. We have an inclusion and a surjection of $\cD_U$-modules:
\[
\cD_U \cdot \omega \subset \cE\,,
\quad \text{and}\quad
\gamma\colon \cD_U\cdot \omega \to \cD_U\cdot \pi 
\]
So we have  $\HH_X^\text{red}=\shHom_{\cD_U}(\cD\cdot \pi, \oo_U)\subset \shHom_{\cD_U}(\cD_U\cdot \omega, \oo_U)$ and  
$R^{n-1}w_{U\, !} \CC=\shHom_{\cD_U}(\cE, \oo_U)\twoheadrightarrow \shHom _{\cD_U}(\cD_U\cdot \omega, \oo_U)$.

\end{rem}

\subsection{Anticanonical del Pezzo hypersurfaces}
\label{sec:surfaces}
In this paper we call a Fano surface a del Pezzo surface.
\smallskip

Johnson and Koll\'ar \cite{MR1821068} classify all anticanonical quasismooth wellformed del Pezzo surfaces in weighted projective $3$-spaces. Their classification consists of $22$ sporadic cases and the series \eqref{eq:0a}, where  $k \in \NN, \  k > 0$. The 22 sporadic cases are all listed in~\cite[Theorem 8]{MR1821068}.
\smallskip

By \eqref{eq:3} for all $k >0$ integer the regularised $I$-function of any surface of the family \eqref{eq:0a} is given by Equation \eqref{eq:I-function}.

\begin{rem} \label{rem:sing-rank}
  By Remark \ref{rem:Hypergeometric_Local_system} the function $\widehat{I}_k$ \eqref{eq:I-function}  satisfies an hypergeometric differential equation
  on $\PP^1$ which is singular on $\Sigma_k=\{0, \alpha_{k,0}, \infty\}$, where $\alpha_{k,0}$ is as in Equation~\eqref{eq:alpha0}. The reduced differential operator associated to $\widehat{I}_k$, which we denote by $H_k^{\mathrm{red}},$ has order $6k+2,$ thus the local system $\HH_k^{\mathrm{red}}$ given by its solutions has rank $6k+2.$
\end{rem}

\begin{rem}
In light of the definitions of Section \ref{sec:mirrorsymmetry}, the data at the beginning of Section~\ref{sec:periods} define a LG model and our main Theorem can be interpreted as stating that this LG model is the mirror to the corresponding family of Johnson and Koll\'ar. Also, by Remarks \ref{rem:local_systems} and \ref{rem:sing-rank}, $\HH_k^{\mathrm{red}} \simeq R^1w_{U_k \, \star} \CC$, as stated in Corollary \ref{cor:1}.
\end{rem}


\section{Proof of the main Theorem}
\label{sec:periods}

In this Section, we prove the main Theorem stated in the
Introduction. We begin by giving data to construct the period
integral. 

\paragraph{The period integral} Fix an integer $k >0$. We define data
$(Y_k, w_k, \omega_k, \gamma_k)$ as follows:
\begin{enumerate}[(i)]
\item $Y_k \subset \PP(1,1,3k+2) \times \CC^{\times}$ is the $2$-dimensional manifold given by \eqref{eq:1a} and \eqref{eq:1b};
\item $w_k \colon Y_k \to \CC^{\times}$ is the projection on the second factor.
\end{enumerate}
Let $\alpha_{k,0}$ be as in Equation \eqref{eq:alpha0} and consider $\alpha \neq \alpha_{k,0}$.
\begin{enumerate}[(i)]
\item[(iii)]  $\omega_{k, \alpha}=\frac{1}{2\pi \text{\texttt{i}}} \frac{ t^{2k}\,dt}{y}$, where $t=t_1/t_0$;
\item[(iv)] $\gamma_{k,\alpha}$ is the cycle that we describe next.
\end{enumerate}
From this data we construct the period integral:
\[
\pi_k(\alpha) = \frac{1}{2\pi \text{\texttt{i}}}
\oint_{\gamma_{k,\alpha}} \omega_{k,\alpha} 
\]
This is the period of $Y_k$ of Equation~\eqref{eq:period}.

\begin{rem}
This data defines a LG model, according to Definition~\ref{dfn:LGmodel}, and the period~\eqref{eq:period} is the period of the LG model~\eqref{eq:periodLG}. Our main
Theorem can be interpreted as stating that this LG model is a mirror of the family of surfaces $X_k$. 
\end{rem}

\paragraph{The cycle of integration} Let us denote by
$p_{k,\alpha} \colon Y_{k, \alpha}\to \PP^1$ the $2:1$ cover;
$p_{k,\alpha}$ is branched at the $6k+4$ roots of the polynomial \
\[
h_{k,\alpha}(t)=t \left( 4t^{2k+1}+\alpha \right)
\left(-64t^{4k+2}+t^{4k+1}-32\alpha t^{2k+1}-4\alpha^2 \right)
\]
\begin{lem}
  \label{lem:asymptotics}
 For $|\alpha |\ll 1$ the polynomial $h_{k,\alpha}$ has:
 \begin{itemize}
 \item A root at $t=0$;
 \item $4k+1$ roots of norm $\sim |\alpha|^{\frac{2}{4k+1}}$;
 \item $2k+1$ roots of norm $\sim |\alpha|^{\frac{1}{2k+1}}$;
 \item A root of norm $\sim \frac{1}{64}$.
 \end{itemize}
\end{lem}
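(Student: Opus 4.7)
The plan is to exploit the evident factorisation of $h_{k,\alpha}(t)$ displayed in the statement, namely $h_{k,\alpha}(t) = t\,(4t^{2k+1}+\alpha)\,q_{k,\alpha}(t)$ with $q_{k,\alpha}(t) = -64t^{4k+2} + t^{4k+1} - 32\alpha t^{2k+1} - 4\alpha^2$, and to analyse each factor separately, appealing to continuity of roots of a polynomial in its coefficients (equivalently, Rouché's theorem on suitably chosen disks) whenever one needs to pass from a limit polynomial to $q_{k,\alpha}$ for $|\alpha|\ll 1$. The first two factors are immediate: $t=0$ gives a single root, and $4t^{2k+1}+\alpha=0$ gives $2k+1$ simple roots with $|t| = (|\alpha|/4)^{1/(2k+1)} \sim |\alpha|^{1/(2k+1)}$. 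Since $\deg h_{k,\alpha} = 1 + (2k+1) + (4k+2) = 6k+4$, it remains to locate the $4k+2$ roots of $q_{k,\alpha}$.

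The key step is to apply two different rescalings to $q_{k,\alpha}$. To detect the small roots, I would substitute $t = \alpha^{2/(4k+1)}s$ and divide by $\alpha^2$. Noting that $(8k+4)/(4k+1) = 2 + 2/(4k+1)$ and $(8k+3)/(4k+1) = 2 + 1/(4k+1)$, both strictly greater than $2$, the equation $q_{k,\alpha}=0$ becomes
\[
-64\,\alpha^{2/(4k+1)}s^{4k+2} + s^{4k+1} - 32\,\alpha^{1/(4k+1)}s^{2k+1} - 4 = 0,
\]
whose $\alpha\to 0$ limit is $s^{4k+1} - 4 = 0$, with $4k+1$ simple nonzero roots of modulus $4^{1/(4k+1)}$. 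Continuity of roots then delivers $4k+1$ roots of $q_{k,\alpha}$ with $|t| \sim |\alpha|^{2/(4k+1)}$. For the remaining root I simply specialise $\alpha = 0$ without any rescaling: $q_{k,0}(t) = t^{4k+1}(1 - 64t)$ has a simple zero at $t = 1/64$, which deforms by continuity to a unique root of $q_{k,\alpha}$ near $1/64$ for $|\alpha|\ll 1$.

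Counting $1 + (2k+1) + (4k+1) + 1 = 6k+4$ confirms that all roots have been accounted for. The only (mild) obstacle is the bookkeeping of fractional exponents: one must verify that under the rescaling $t = \alpha^{2/(4k+1)}s$ the two dominant terms of $q_{k,\alpha}$, namely $t^{4k+1}$ and $-4\alpha^2$, end up at exactly the same order $\alpha^2$, while the remaining two terms acquire strictly positive powers of $\alpha^{1/(4k+1)}$ and so contribute only a vanishing perturbation. The exponent computation above makes this transparent, and the rest of the argument is a direct application of continuity of roots.
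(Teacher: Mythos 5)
Your proof is correct, and its skeleton --- factoring $h_{k,\alpha}$, handling $t=0$ and the $2k+1$ roots of $4t^{2k+1}+\alpha$ directly, and locating the roots of the degree-$(4k+2)$ factor by continuity of roots from the limit polynomial $t^{4k+1}(1-64t)$ --- coincides with the paper's. The one genuine difference is how the size of the $4k+1$ small roots of your $q_{k,\alpha}$ (the paper's $g_{k,\alpha}$) is pinned down: the paper rewrites $q_{k,\alpha}=0$ as $t^{4k+1}=4\left(4t^{2k+1}+\alpha\right)^2$ and asserts that a root tending to $0$ satisfies $|t|^{4k+1}\sim|\alpha|^2$, which strictly speaking needs one to rule out near-cancellation between $4t^{2k+1}$ and $\alpha$ (easy, since $(4k+1)/(2k+1)<2$ makes the alternative balances inconsistent), whereas you perform the Newton-polygon rescaling $t=\alpha^{2/(4k+1)}s$ and pass to the limit equation $s^{4k+1}=4$, which identifies the small roots as $\sim 4^{1/(4k+1)}\zeta\,\alpha^{2/(4k+1)}$ with $\zeta^{4k+1}=1$ and makes both the exponent and the absence of cancellation transparent. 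Your route costs a little bookkeeping --- fixing a branch of $\alpha^{2/(4k+1)}$, and noting that the rescaled polynomial has degree $4k+2$ with leading coefficient tending to $0$, so exactly one root escapes to infinity (the root near $1/64$, which you recover separately by setting $\alpha=0$), with your count $1+(2k+1)+(4k+1)+1=6k+4$ closing the argument --- but it buys sharper asymptotics and replaces the paper's terse one-line deduction by a mechanical dominant-balance computation.
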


\begin{proof}
  Clearly $h_{k, \alpha}$ has a root at $t=0$ and $2k+1$ roots of norm
  $|\alpha/4|^{\frac{2}{4k+1}}$. Consider the polynomial
  $g_{k, \alpha}(t)=-64t^{4k+2}+t^{4k+1}-32\alpha t^{2k+1}-4\alpha^2$;
  then as $\alpha \to 0$ $g_{k,\alpha}( t)$ has a root
  $t_{\alpha} \sim \frac{1}{64}$ and $4k+1$ roots $t_\alpha$ of norm
  $\sim |\alpha|^{\frac{2}{4k+1}}$. Indeed
\[
\lim_{\alpha\to 0} g_{k,\alpha}(t)=t^{4k+1}(-64t +1)
\]
hence for $|\alpha |\ll 1$ $g_{k,\alpha}$ has a root that tends to $ \frac{1}{64}$ and $4k+1$ roots that tend to $0$. 
Now $t_\alpha$ is a root if and only if
\[
t_\alpha^{4k+1}=4(4t_\alpha^{2k+1} +\alpha)^2
\]
and if $t_\alpha \to 0$, then $|t_\alpha|^{4k+1} \sim |\alpha|^2$.
\end{proof}

Choose a continuous function $\rho_k\colon (0,1)\to \RR$ such that for $r\ll 1$:
\[
r^{\frac{2}{4k+1}}\ll \rho_k (r) \ll r^{\frac{1}{2k+1}}
\]
For $|\alpha|\ll 1$ let $\overline{\gamma}_{k,\alpha}$ be the circle of radius $\rho_k(|\alpha|)$
around the origin in $\CC$ starting at $t_0=\rho_k (|\alpha|)$.
By Lemma \ref{lem:asymptotics} this circle divides $\PP^1$ into two
regions each containing an even number of branch points of
$p_{k,\alpha}$, \emph{see} Figure \ref{fig:circle}. Hence
$p_{k, \alpha}^{-1}(\overline{\gamma}_{k,\alpha})\subset Y_{k,\alpha}$ consists of two
disjoint circles: we take $\gamma_{k,\alpha}$ to be the lift
along which the power series expansion in Equation
\eqref{eq:expansion} is valid, \emph{see} Figure
\ref{fig:cover}. \footnote{Alternatively we choose the base point for
  our lift such that $y>0$.}

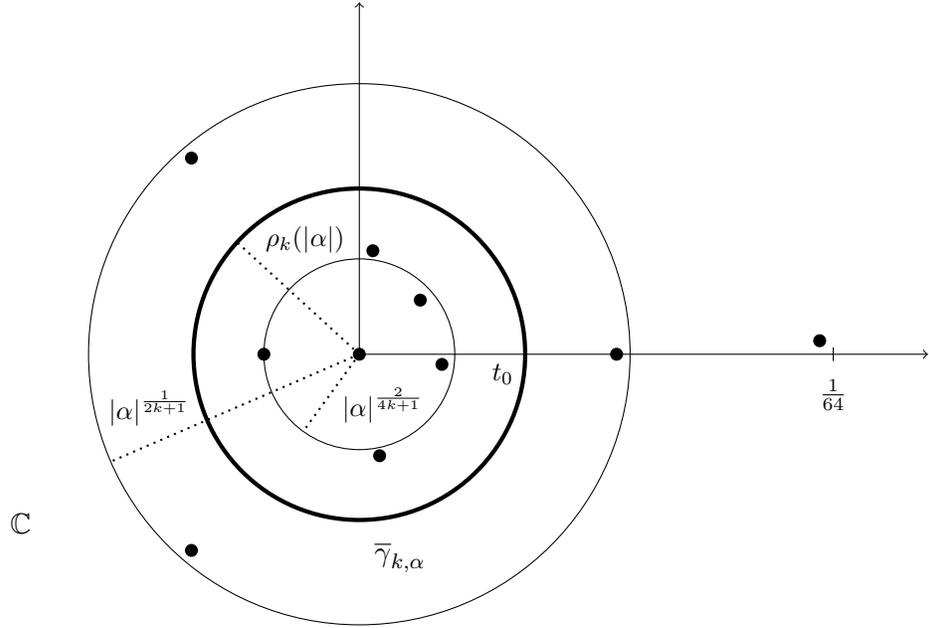
\begin{figure}[!ht] \centering
  \begin{tikzpicture}[scale=0.9] 
    \clip(-5,-0.5) rectangle (14,10);
    
 \draw[ultra thick] (4,4) circle [x radius=2.45cm, y radius=2.45cm]; 
 \draw (4,4) circle [x radius=4cm, y radius=4cm]; 
 \draw (4,4) circle [x radius=1.41cm, y radius=1.41cm];  

  \draw [<->] (4,9.2) -- (4,4) -- (12.4,4); 
 
  \draw [dotted, thick] (4,4) -- (2.2,5.65); 
  \draw [dotted, thick] (4,4) -- (3.2, 2.9);
  \draw [dotted, thick] (4,4) -- (0.3,2.4);
  
 \draw[fill, thick] (4,4) circle [radius=0.08]; 
  \draw[fill, thick] (10.8,4.2) circle [radius=0.08]; 
  \draw[fill, thick] (7.8,4) circle [radius=0.08];  
  \draw[fill, thick] (1.52,6.9) circle [radius=0.08];
  \draw[fill, thick] (1.52,1.1) circle [radius=0.08];
  \draw[fill, thick] (5.22,3.85) circle [radius=0.08]; 
  \draw[fill, thick] (2.59,4) circle [radius=0.08];
   \draw[fill, thick] (4.2,5.53) circle [radius=0.08];
 \draw[fill, thick] (4.3,2.5) circle [radius=0.08];
 \draw[fill, thick] (4.9,4.8) circle [radius=0.08];
  
 \node at (6.12,3.7) {\small{$t_0$}}; 
 \draw[] (11,4.1) -- (11, 3.9); 
 \node at (11,3.4) {\small{$\frac{1}{64}$}};
 \node at (-1,1.5) {$\mathbb{C}$};
 \node at (4.6, 1) {$\overline{\gamma}_{k,\alpha}$};
 \node at (3.2,5.7) {\small{$\rho_k(|\alpha|)$}};
 \node at (4.35,3.25) {\small{${|\alpha|}^\frac{2}{4k+1}$}};
 \node at (0.9,3.2) {\small{${|\alpha|}^\frac{1}{2k+1}$}};
  \end{tikzpicture}
  \caption{The circle $\overline{\gamma}_{k,\alpha} \subset \CC.$ The
    $10$ marked points represent the roots of $h_{k,\alpha}$ for
    $k=1.$} \label{fig:circle} 
  \end{figure}

 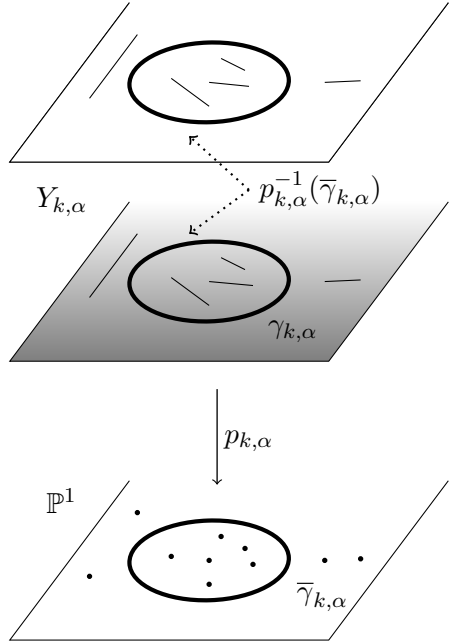
\begin{figure}[!ht] \centering
   \begin{tikzpicture} [scale=0.53] 
     \clip(-1,-12.8) rectangle (13,5.2);
  \draw (3,4) -- (0,0) -- (8,0) -- (11,4);  
  \draw[shade, top color=white, bottom color=white] (3,4) -- (0,0) -- (8,0) -- (11,4); 
  \draw[shade, top color=white, bottom color=gray]  (3,-1) -- (0,-5) -- (8,-5) -- (11,-1);  
  \draw[shade, top color=white, bottom color=white]  (3,-8) -- (0,-12) -- (8,-12) -- (11,-8);
  \draw[->][line width=0.5] (5.2, -5.7) -- (5.2, -8.1); 
  \node at (6,-7) {{$p_{k,\alpha}$}};
  \node at (1.3,-8.5) {{$\mathbb{P}^1$}};
  \node at (1.3,-1) {{$Y_{k,\alpha}$}};
  
 \draw[ultra thick] (5,2) circle [x radius=2cm, y radius=1cm, rotate=2]; 
 \draw[ultra thick] (5,-3) circle [x radius=2cm, y radius=1cm, rotate=2];
 \draw[ultra thick] (5,-10) circle [x radius=2cm, y radius=1cm, rotate=2];
 \draw[thin]  (5,2) -- (6,1.9);  
 \draw[thin] (5.3, 2.6) -- (5.9, 2.3);
 \draw[thin] (4.05,2.1) -- (5,1.4);  
 \draw[thin] (7.9,2) -- (8.8,2.04); 
 \draw[thin] (3.2,3.2) --  (2,1.6);
 \draw[thin]  (5,-3) -- (6.1,-3.1);  
 \draw[thin] (5.3, -2.4) -- (5.9, -2.7);
 \draw[thin] (4.05,-2.9) -- (5,-3.6);  
 \draw[thin] (7.9,-3) -- (8.8,-2.96); 
 \draw[thin] (3.2,-1.8) --  (2,-3.4);
 \draw[fill, thick] (5,-10) circle [radius=0.04]; 
 \draw[fill, thick] (6.1,-10.1) circle [radius=0.04];
 \draw[fill, thick] (5.3,-9.4) circle [radius=0.04];
 \draw[fill, thick] (5.9,-9.7) circle [radius=0.04];
 \draw[fill, thick] (4.05,-9.9) circle [radius=0.04];
 \draw[fill, thick] (5,-10.6) circle [radius=0.04];
 \draw[fill, thick] (7.9,-10) circle [radius=0.04]; 
 \draw[fill, thick] (8.8,-9.96) circle [radius=0.04];
 \draw[fill, thick] (3.2,-8.8) circle [radius=0.04]; 
  \draw[fill, thick] (2,-10.4) circle [radius=0.04];
 \draw [->] [dotted, thick] (6,-0.7) -- (4.5,0.65);
 \draw [->] [dotted, thick] (6,-0.7) -- (4.5,-1.8);
 \node at (7.8,-0.72) {$p_{k,\alpha}^{-1}(\overline{\gamma}_{k,\alpha})$};
 \node at (7.84,-10.9) {$\overline{\gamma}_{k,\alpha}$};
\node at (7.1,-4.25) {$\gamma_{k,\alpha}$};  
\end{tikzpicture}
\caption{The circle $\gamma_{k,\alpha}$ on $Y_{k,\alpha}$. The gray sheet of the cover represents the one where \eqref{eq:expansion} is valid. The $5$ line segments on each sheet indicate a choice of branch cuts for $k=1$.}  \label{fig:cover}
\end{figure}

\paragraph{Proof of the main Theorem} An immediate consequence of the
next two lemmas. \qed

\begin{lem}
Let $\pi_k(\alpha)$ be the period integral described above. Then, setting $m=(2k+1)j$:
 \begin{equation}
  \label{eq:expansion}
\pi_k(\alpha)=
 \sum_{j=0}^{\infty}   \binom{-1/2}{m}(-1)^{m} 4^{3m-j}\sum_{p=j}^{2m} \binom{-1/2}{p-j} \binom{2m}{p}   \ \alpha^j
 \end{equation}
\end{lem}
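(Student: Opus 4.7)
My plan is to expand $1/y$ as a convergent Laurent series in $t$ on the contour $|t|=\rho_k(|\alpha|)$ and extract the coefficient of $t^{-1}$ from $t^{2k}/y$, which by the residue theorem equals $\pi_k(\alpha)$. The essential algebraic input is the identity
\[
-64 t^{4k+2} + t^{4k+1} - 32\alpha t^{2k+1} - 4\alpha^2 = t^{4k+1} - 4A^2,
\qquad A := \alpha + 4t^{2k+1},
\]
which follows by directly expanding $4A^2 = 4\alpha^2 + 32\alpha t^{2k+1} + 64 t^{4k+2}$. This yields the clean factorization $h_{k,\alpha}(t) = tA\bigl(t^{4k+1} - 4A^2\bigr)$, and with the substitutions $u := 4t^{2k+1}/\alpha$ and $v := 4A^2/t^{4k+1}$ one computes
\[
y^2 \;=\; \frac{h_{k,\alpha}(t)}{\alpha} \;=\; t^{4k+2}(1+u)(1-v).
\]

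On the contour, the defining inequalities $|t|^{2k+1}\ll|\alpha|$ and $|t|^{4k+1}\gg|\alpha|^2$ of $\rho_k$ force $|u|,|v|\ll 1$, so both $(1+u)^{-1/2}$ and $(1-v)^{-1/2}$ are absolutely convergent binomial series; by Lemma~\ref{lem:asymptotics} the contour encloses $4k+2$ branch points of $p_{k,\alpha}$, an even number, so $y$ is single-valued on the annulus and the Laurent expansion of $1/y$ is unambiguous. Using $v^p = (4\alpha^2/t^{4k+1})^p(1+u)^{2p}$ I get
\[
\frac{1}{y} \;=\; t^{-(2k+1)} \sum_{p\geq 0}\binom{-1/2}{p}(-1)^p\frac{(4\alpha^2)^p}{t^{(4k+1)p}}(1+u)^{2p-1/2},
\]
and expanding $(1+u)^{2p-1/2} = \sum_q \binom{2p-1/2}{q} u^q$ with $u^q = 4^q t^{(2k+1)q}/\alpha^q$, term-by-term integration selects pairs $(p,q)$ with $(2k+1)q = (4k+1)p$. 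Since $\gcd(2k+1,4k+1)=1$, these are exactly $p=(2k+1)j=:m$ and $q=(4k+1)j = 2m-j$ for $j\geq 0$.

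Collecting the surviving contributions yields
\[
\pi_k(\alpha) \;=\; \sum_{j\geq 0}\binom{-1/2}{m}\binom{2m-1/2}{2m-j}(-1)^m 4^{3m-j}\alpha^j,
\]
and the form in the statement follows from Chu--Vandermonde:
\[
\binom{2m-1/2}{2m-j} \;=\; \sum_{\ell\geq 0}\binom{2m}{\ell}\binom{-1/2}{2m-j-\ell} \;=\; \sum_{p=j}^{2m}\binom{2m}{p}\binom{-1/2}{p-j},
\]
where the second equality substitutes $p = 2m-\ell$ and uses $\binom{2m}{2m-p} = \binom{2m}{p}$. The main obstacle is spotting the algebraic identity at the outset; without it the factorization $h_{k,\alpha} = tA(t^{4k+1} - 4A^2)$ is not evident and the double binomial expansion does not collapse so neatly into a single sum parametrized by $j$.
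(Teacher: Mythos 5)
Your proof is correct. The underlying strategy is the same as the paper's---pass to the factorization $y^2 = t^{4k+2}(1+u)(1-v)$ with $u=4t^{2k+1}/\alpha$ and $v=4A^2/t^{4k+1}$ (the paper writes $v$ as $t(8+2\alpha/t^{2k+1})^2$, which is the same quantity), expand in binomial series on the annulus $|t|=\rho_k(|\alpha|)$, and extract the coefficient of $t^{-1}$---but you organize the bookkeeping a bit differently. Because $v^p = (4\alpha^2/t^{4k+1})^p(1+u)^{2p}$, you merge the stray $(1+u)^{2p}$ with the $(1+u)^{-1/2}$ factor to get a single series in $(1+u)^{2p-1/2}$, so the residue lands you directly on the closed form $\binom{2m-1/2}{2m-j}$, which you then unroll by Chu--Vandermonde to match the double-sum statement. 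The paper instead expands $(1+u)^{-1/2}$ and $(1-v)^{-1/2}$ separately and writes $v^m = 64^m t^m(1+\alpha/(4t^{2k+1}))^{2m}$, expanding this last binomial to get a triple sum; the residue then kills one index and leaves precisely $\sum_{p=j}^{2m}\binom{-1/2}{p-j}\binom{2m}{p}$, deferring the Chu--Vandermonde collapse to the following lemma. Your route is more economical (two series instead of three, and you directly obtain the cleaner collapsed coefficient), at the small cost of having to run Chu--Vandermonde backwards to recover the literal form of the statement. Both derivations are correct, and the supporting details you supply---the identity $h_{k,\alpha}(t)=tA(t^{4k+1}-4A^2)$, the bound $|u|,|v|\ll 1$ on the contour from the defining inequalities of $\rho_k$, the evenness of the count of enclosed branch points making $y$ single-valued, and $\gcd(2k+1,4k+1)=1$ forcing $(p,q)=((2k+1)j,(4k+1)j)$---are all accurate and exactly what is needed.
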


\begin{proof}
By a small manipulation we write the period as:
 \begin{equation} \label{eq:period_simple}
  {\pi}_k(\alpha)= \frac{1}{2\pi \text{\texttt{i}}} \oint_ {\overline{\gamma}_{k,\alpha}}   \frac{1}{\sqrt{  \left(1 + \frac{4t^{2k+1}}{\alpha} \right) }} \frac{1}{\sqrt{\left(1-t \ \left(8+\frac{2\alpha}{t^{2k+1}}\right)^2 \right) }} \ \frac{  dt}{t}.
   \end{equation}
By the defining inequalities of the function $\rho_k$, and our choice of $\gamma_{k,\alpha}$, both the following power series expansions hold:
\begin{equation*}
\begin{split} 
\left( 1 +\frac{4t^{2k+1}}{\alpha}\right)^{-\frac1{2}}& = \sum_{n=0}^{\infty} \binom{-1/2}{n} \frac{4^nt^{(2k+1)n}}{\alpha^n}\\
\left(1-t \ \left(8+\frac{2\alpha}{t^{2k+1}}\right)^2\right)^{-\frac1{2}} &  =\sum_{m=0}^{\infty} \binom{-1/2}{m} (-1)^m 4^{3m} t^m {\left(1+\frac{\alpha}{4t^{2k+1}}\right)}^{2m}
\end{split}
\end{equation*} 

Plugging the two power series in~\eqref{eq:period_simple}, switching the series and the integral signs and using the binomial theorem we obtain:
\begin{equation*}
\begin{split}
  \pi_k(\alpha) =\sum_{n=0}^{\infty} \ \sum_{m=0}^{\infty} \
  \sum_{p=0}^{2m} \binom{-1/2}{n} \binom{-1/2}{m} \binom{2m}{p} (-1)^m
  4^{3m+n-p} \alpha^{p-n} \frac{1}{2\pi \text{\texttt{i}}} \oint_
  {\overline{\gamma}_{k,\alpha}} \frac{dt}{t} t^{m-(2k+1)(p-n)}.
\end{split}
\end{equation*}
Finally, applying the residue theorem around $t=0$ and setting $j=p-n,$ we obtain the result.
\end{proof}

\begin{lem}
For all  $k>0$ and $j\geq 0$ integers, setting $m=(2k+1)j$:
\begin{equation} \label{eq:coefficients}
\binom{-1/2}{m}(-1)^{m} 4^{3m-j}\sum_{p=j}^{2m} \binom{-1/2}{p-j} \binom{2m}{p}= \frac{\bigl((8k+4)j\bigr)!j!}{(2j)!\bigl((2k+1)j\bigr)!^2 \bigl((4k+1)j\bigr)!} 
\ \, 
\end{equation}
\end{lem}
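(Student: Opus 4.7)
The identity is purely combinatorial, and after unraveling the binomial coefficients it reduces to a single application of a Vandermonde-type convolution. My plan is to first rewrite the left-hand side in a way that isolates the inner sum as a coefficient extraction from the product $(1+x)^{-1/2}(1+x)^{2m}$, then collapse that product and simplify the resulting generalized binomial coefficient into factorials.

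Using $\binom{-1/2}{m}(-1)^m=\binom{2m}{m}/4^m$, the LHS becomes
$$\binom{2m}{m}\,4^{2m-j}\sum_{p=j}^{2m}\binom{-1/2}{p-j}\binom{2m}{p}.$$
After the change of variable $q=p-j$ and using $\binom{2m}{q+j}=\binom{2m}{2m-q-j}$, the inner sum is the coefficient of $x^{2m-j}$ in $(1+x)^{-1/2}\cdot(1+x)^{2m}=(1+x)^{2m-1/2}$, that is,
$$\sum_{p=j}^{2m}\binom{-1/2}{p-j}\binom{2m}{p}=\binom{2m-1/2}{2m-j}.$$

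Next I would expand this generalized binomial coefficient by writing the numerator as the product $(j+\tfrac12)(j+\tfrac32)\cdots(2m-\tfrac12)$ of half-integers, clearing denominators by multiplying numerator and denominator by $2^{2m-j}$, and then using the standard identities $(4m-1)!!=(4m)!/(2^{2m}(2m)!)$ and $(2j-1)!!=(2j)!/(2^{j}j!)$ to obtain
$$\binom{2m-1/2}{2m-j}=\frac{(4m)!\,j!}{4^{2m-j}\,(2j)!\,(2m)!\,(2m-j)!}.$$
Multiplying through by the prefactor $\binom{2m}{m}\,4^{2m-j}$ and cancelling a $(2m)!$ yields
$$\text{LHS}=\frac{(4m)!\,j!}{(m!)^{2}\,(2j)!\,(2m-j)!}.$$

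Finally, substituting $m=(2k+1)j$ gives $4m=(8k+4)j$ and $2m-j=(4k+1)j$, and the expression matches the RHS exactly. The only step that is not completely mechanical is the collapse of the inner sum via $(1+x)^{2m}(1+x)^{-1/2}=(1+x)^{2m-1/2}$; everything after that is factorial bookkeeping, so I do not anticipate any serious obstacle.
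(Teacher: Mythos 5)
Your proof is correct and follows essentially the same route as the paper: you convert the prefactor via $\binom{-1/2}{m}(-1)^m=\binom{2m}{m}/4^m$, collapse the inner sum to $\binom{2m-1/2}{2m-j}$ by (the generating-function form of) Chu--Vandermonde, and finish with the double-factorial identities $(4m-1)!!=(4m)!/(2^{2m}(2m)!)$ and $(2j-1)!!=(2j)!/(2^{j}j!)$. The only difference is presentational --- you keep $m$ general and substitute $m=(2k+1)j$ at the very end, which is slightly cleaner than the paper's early substitution, but the underlying identities are identical.
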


\begin{proof} Clearly \eqref{eq:coefficients} holds when $j=0,$ thus we assume $j \geq 1$. In what follows we repeatedly use the identity: 
  \begin{equation}  \label{eq:bifactorial} 
(2l-1)!! = \frac{(2l)!}{2^l \cdot l!} \quad \left(l>0 \ \text{integer}\right)
\end{equation}
We have: 
\begin{equation} \label{eq:binomial} \binom{-1/2}{m}=\frac{(-1)^m (2m-1)!!}{2^mm!} = \frac{(-1)^m}{4^m} \ \binom{2m}{m} \end{equation}
We set $i=p-j$ and we write the finite sum on the left hand side of \eqref{eq:coefficients} as:
\begin{equation} \label{eq:finitesum}
\sum_{p=j}^{2m} \binom{-1/2}{p-j} \binom{2m}{p}=\sum_{i=0}^{2m-j} \binom{-1/2}{i}\binom{2m}{j+i}=\sum_{i=0}^{2m-j} \binom{-1/2}{i} \binom{2m}{2m-j-i}=\binom{2m-1/2}{2m-j} \end{equation}
where the last equality in \eqref{eq:finitesum} follows from the Chu--Vandermonde formula for generalised binomial coefficients:
\begin{equation*} \label{eq:CV}
  \sum_{i=0}^{n} \binom{\beta}{i} \binom{\alpha}{n-i}=\binom{\beta+\alpha}{n} \quad \left( \alpha, \beta \in \mathbb{C}, \  n \in \NN  \right)
\end{equation*} 
Plugging \eqref{eq:binomial} and \eqref{eq:finitesum} in \eqref{eq:coefficients} and using that $m=(2k+1)j$ we can rewrite \eqref{eq:coefficients} as:
\begin{equation}  \label{eq:coefficients1}  4^{(4k+1)j} \  \left((4k+2)j\right)!   \ \binom{(4k+2)j-1/2}{(4k+1)j}=\frac{\left((8k + 4)j\right)! \  j!}{(2j)!\left((4k + 1)j\right)!} 
\end{equation}
Now we note that \[
   \ \binom{(4k+2)j-1/2}{(4k+1)j}= \frac{\left( (8k+4)j-1\right)!!}{(2j-1)!! \ 2^{4k+1} \ \left( (4k+1)j \right)! }
 \]
Using this equality  combined with \eqref{eq:bifactorial} for  $l=j$, we simplify \eqref{eq:coefficients1} as:
\begin{equation} \label{eq:true_equality} 2^{(4k+2)j}  \left((4k+2)j\right)! \left( (8k+4)j-1\right)!!= \left((8k + 4)j\right)! \end{equation}
Since  \eqref{eq:true_equality} manifestly holds (it is \eqref{eq:bifactorial} for $l=(4k+2)j$), the result is proved.
\end{proof}


\section{Relations to the work of Beukers, Cohen, Mellit}
\label{sec:relation}

In this Section we show how our Equation~\eqref{eq:2a} arises from the work \cite{MR3613122} as a mirror of the wrong dimension.

\subsection{Finite hypergeometric functions and point counting}
\label{sec:finitehypergeometric}

We summarise the main result in \cite{MR3613122}.
\smallskip

Let $v,w \in \QQ^d$ be such that $\forall i,j \in \{1, \dots, d\}$ $v_i \neq w_j \ \mathrm{mod} \ \ZZ$ and the polynomials $\prod_{j=1}^d (x-e^{2\pi i v_j})$ and $\prod_{j=1}^d (x-e^{2\pi i w_j})$ are products of cyclotomic polynomials. Then there exist natural numbers $p_1, \dots, p_r$ and $q_1, \dots, q_s,$ with $p_1 + \dots + p_s= q_1 + \dots + q_s,$ such that
$$ \prod_{j=1}^d \frac{x-e^{2\pi i v_j}}{x-e^{2\pi i w_j}}= \frac{\prod_{j=1}^r x^{p_j}-1}{ \prod_{j=1}^s x^{q_j}-1}$$ and the analytic hypergeometric function 
\[
{}_{d}F_{d-1}(v, w | \lambda)=\sum_{n=0}^{\infty} \frac{(v_1)_n \cdots (v_d)_n}{(w_1)_n \cdots (w_d)_n} \  \lambda^n
\quad \text{where} \; (x)_n=
\begin{cases}
x (x+1) \cdots (x+n-1) \; &\text{if} \; n\geq 1\\
1  \; & \text{if} \; n=0
\end{cases}
\]
can be rewritten in the form:
\begin{equation} \label{eq:hypergeo_2nd_form}{}_{d}F_{d-1}(v, w | \lambda)= \sum_{j=0}^{\infty} \frac{(p_1 j)! \dots (p_r j)!}{(q_1 j)! \dots (q_s j)! }  \ M^{-j} \lambda^j, \quad M=\frac{ p_1^{p_ 1} \dots p_r^{p_r}}{ q_1^{q_ 1} \dots q_r^{q_r}}
\end{equation} 

Similarly, the  finite hypergeometric function $H_q(v,w|\lambda)$ --- where $q$ is a prime power coprime with $\hcf (v,w)$ --- can be written in terms of $p_i,q_j$ only \cite[Definition 1.1, Theorem 1.3]{MR3613122}.
\smallskip

For all $\alpha \in \mathbb{F}_q^{\times},$ Beukers, Cohen and Mellit consider the quasiprojective (in fact affine) variety $W_{\alpha}$ given by the homogeneous equations:
\begin{displaymath}  
  \left\{ \begin{array}{ll}
            &y_1+y_2 + \dots +y_r- x_1- \dots - x_s=0 \\
            \vspace{0.02cm}
            &\alpha \cdot y_1^{p_1} \dots y_r^{p_r}= x_1^{q_1} \dots  x_s^{q_s}
\end{array} \right.  \ \ \big(\text{for all}\; j,\;x_j,y_j \neq 0 \big)  
\end{displaymath} 
and prove the following, \emph{see}  \cite[Theorem 1.5]{MR3613122} for the precise statement and \cite[Section 5]{MR3613122} for its proof:

\begin{thm} \label{thm:BCM} If $\mathrm{gcd}(p_1, \dots, p_r, q_1, \dots, q_s)=1$ and $M\cdot \alpha \neq 1,$ there exists a nonsingular completion $\overline{W}_{\alpha}$ of $W_{\alpha}$ such that $|\overline{W}_{\alpha}(\mathbb{F}_q)|=H_q(v,w,M \cdot  \alpha)$ up to factors depending only on $p_i,q_j.$  \end{thm}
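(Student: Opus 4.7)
My plan is to compute the point count $|W_\alpha(\mathbb{F}_q)|$ by direct Fourier analysis on $\mathbb{F}_q$, match the answer against the Gauss-sum formula defining $H_q(v,w\mid M\alpha)$, and then produce a nonsingular compactification whose boundary contributes a combinatorial expression in the exponents $p_i, q_j$ that is independent of $\alpha$.

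First, I would detect the two defining equations of $W_\alpha$ by orthogonality relations. The linear equation $\sum y_i - \sum x_j = 0$ is detected by an additive character $\psi$ of $\mathbb{F}_q$ via $\mathbf{1}_{a=0} = q^{-1}\sum_t \psi(ta)$, while the monomial equation $\alpha\prod y_i^{p_i} = \prod x_j^{q_j}$ is detected by summing $\chi(a)\overline{\chi}(b)$ over all multiplicative characters $\chi$ of $\mathbb{F}_q^\times$. After interchanging sums, the count decouples into products of one-dimensional twisted Gauss sums
\[
g_m(\chi,\psi_t) = \sum_{x\in\mathbb{F}_q^\times} \chi(x)^m\,\psi(tx)
\]
multiplied by a single factor $\chi(\alpha)$. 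The hypothesis $\hcf(p_1,\dots,p_r,q_1,\dots,q_s)=1$ is exactly what is needed to reindex the outer $\chi$-sum so that the resulting collection of Gauss sums matches, character by character, the Gauss-sum definition of $H_q(v,w\mid\lambda)$ from \cite[Definition 1.1]{MR3613122}; the diagonal rescaling $x_j\mapsto x_j/q_j$, $y_i\mapsto y_i/p_i$ absorbs the rational exponents encoded in $v_j, w_j$ and produces the reparametrisation $\lambda = M\alpha$.

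Next, I would take the nonsingular completion $\overline{W}_\alpha$ to be a toric compactification: namely, the closure of $W_\alpha$ in a toric ambient variety whose fan refines the Newton polytope of the monomial equation, resolved if necessary by a standard toric desingularisation. The hypothesis $M\alpha\neq 1$ is precisely what guarantees that $\overline{W}_\alpha$ is smooth and meets every toric stratum transversally. Each boundary stratum is a monomial subvariety of a lower-dimensional torus and admits the same Fourier-analytic treatment; the crucial point is that on any such stratum some $x_j$ or $y_i$ vanishes, forcing the corresponding exponent of $\chi$ to be trivial and eliminating the $\chi(\alpha)$ dependence. Summing over strata produces the ``factors depending only on $p_i, q_j$'' in the statement.

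The main obstacle will be the second step: producing the explicit smooth compactification in characteristic $p$ and verifying that every boundary stratum contributes a character sum genuinely independent of $\alpha$. The first step is essentially mechanical bookkeeping once the normalisation $\lambda = M\alpha$ is identified, whereas the geometric step requires real care --- both to ensure that the toric resolution is smooth over $\mathbb{F}_q$ and to check compatibility of the stratification with Frobenius, so that the stratum-by-stratum counts can be summed cleanly.
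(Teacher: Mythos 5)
The paper does not prove this statement at all: Theorem~\ref{thm:BCM} is quoted from Beukers--Cohen--Mellit, with the precise statement at \cite[Theorem 1.5]{MR3613122} and the proof at \cite[Section 5]{MR3613122}, so there is no in-paper argument to compare yours against. Judged on its own, your sketch follows the same general strategy as the cited proof --- detect the two equations by additive and multiplicative characters, decouple into Gauss sums, match against the Gauss-sum definition of $H_q$, and control the boundary of a toric completion --- but the two steps you defer are exactly where the content lies, and your description of them is not yet a proof.

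Concretely: the matching of the decoupled character sum with $H_q(v,w\mid M\alpha)$ is not ``mechanical bookkeeping after reindexing.'' The definition of $H_q$ involves Gauss sums attached to the characters with exponents $v_j,w_j$ (i.e.\ characters of order dividing the denominators), whereas your point count produces Gauss sums of powers $\chi^{p_i}$, $\chi^{q_j}$; converting one into the other requires the Hasse--Davenport product relation, and the hypothesis $\gcd(p_1,\dots,p_r,q_1,\dots,q_s)=1$ enters through that conversion, not through a change of variables $x_j\mapsto x_j/q_j$ (which in any case needs the $p_i,q_j$ invertible mod $p$ and only rescales $\alpha$, it does not ``absorb'' the rational exponents). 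On the geometric side, the closure of $W_\alpha$ in a toric variety refining the normal fan of its Newton polytope is not automatically smooth, and a ``standard toric desingularisation'' only resolves the ambient space; what one actually uses is nondegeneracy of the hypersurface with respect to its Newton polytope (this is where $M\cdot\alpha\neq 1$ enters), together with a stratum-by-stratum analysis showing that each boundary stratum is again a torus hypersurface of the same shape whose count is independent of $\alpha$. You correctly flag this as the main obstacle, but as it stands it is an unproved assertion, so the proposal is an outline of the Beukers--Cohen--Mellit argument rather than a complete alternative proof; if you want to fill it in, the right move is simply to follow \cite[Section 5]{MR3613122}.
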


\subsection{The family of \mbox{$3$-folds} of Theorem~\ref{thm:2a}}
\label{sec:family-3-folds}

We provide some additional context for our statement of Theorem~\ref{thm:2a}. Specifically, we build a precise connection with Theorem~\ref{thm:BCM}.
\smallskip

Fix $k >0$ integer. We specialise the above discussion to the pair of vectors $v_k,w_k$ in $\QQ^{d_k}, \ d_k=6k+2$:
\begin{equation*}  v_k=\left( \frac{j}{8k+4}\right)_{{ j \in \{1, \dots, 8k+3 \} \setminus \left( \{4i\}_{i\in \{1, \dots, 2k\}} \cup \{4k+2\} \right)}}  \quad   w_k=\left(\frac{l}{2k+1},  \frac{m}{4k+1}\right)_{ l \in \{1, \dots, 2k+1\}, \ m \in \{1, \dots, 4k+1\}} 
\end{equation*}
This leads to
\begin{equation*}  p_k=(8k+4,1) \ \ q_k=(2,2k+1,2k+1,4k+1)
  \end{equation*}
For $\lambda=M_k \cdot \alpha$, by \eqref{eq:hypergeo_2nd_form}
  \begin{equation*}{}_{d_k}F_{d_k-1}(v_k, w_k | \lambda)= \sum_j \frac{((8k + 4)j)! j!}{(2j)!{((2k + 1)j)!}^2 ((4k + 1)j)!} M_k^{-j} \lambda ^j
 = \widehat{I}_k(\alpha) \end{equation*}  
that is, the regularised $I$-function of Equation~\eqref{eq:I-function}.

Still following the above discussion, for all $\alpha$ in $\CC^{\times}$ consider the quasiprojective (in fact, affine) \mbox{$3$-fold} $W_{k,\,\alpha}$ given by the homogeneous equations:
\begin{equation}   \label{eq:BCM_ours}
  \left\{ \begin{array}{ll}
            &y_1+y_2- (x_1+x_2+x_3+x_4)=0 \\
            \vspace{0.02cm}
            
&\alpha \cdot y_1^{8k+4}y_2= x_1^2 x_2^{2k+1} x_3^{2k+1} x_4^{4k+1}
\end{array} \right. \ \ \big(\text{for all}\; j,\;x_j,y_j \neq 0 \big)  
\end{equation}

Next, we manipulate the Equations~\eqref{eq:BCM_ours} to rewrite them as in the Introduction. By solving the first Equation for $y_2$ and writing $x_0=y_1$,  for all $\alpha$ in $\CC^{\times}$ the system of Equations~\eqref{eq:BCM_ours}  leads to the Equation of a \mbox{$3$-fold} in $\PP^4$:
\[
\alpha \cdot {x_0}^{8k+4}(x_1+x_2+x_3+x_4-x_0)={x_1}^2{x_2}^{2k+1}{x_3}^{2k+1}{x_4}^{4k+1}
\]
The \mbox{$3$-fold} $W_{k,\, \alpha} $ is the intersection of the projective \mbox{$3$-fold} defined by the Equation above with the torus 
$\mathbb{T}^4 \subset \PP^4$ and, in the affine chart $ \{x_0\neq 0\}$ with coordinates $u_i=x_i/x_0$, it is described by the Equation:
\begin{equation*}
\alpha \cdot (u_1+u_2+u_3+u_4-1) - {u_1}^2{u_2}^{2k+1}{u_3}^{2k+1}{u_4}^{4k+1}=0 \quad 
\left(\forall i \ \   u_i \neq 0  \right)
\end{equation*}
This is the same as Equation~\eqref{eq:2a}; hence the meaning of the symbol $W_{k,\alpha}$ is the same as in the Introduction.

\smallskip

Theorem~\ref{thm:BCM} computes $|\overline{W}_{k, \alpha}(\FF_q)|$ in terms of the finite analog of $\widehat{I}_k$. The statement can be interpreted as asserting that the manifold $W_{k}$ of Equation~\eqref{eq:2a} is a sort of LG mirror of the family of del Pezzo surfaces $X_{8k+4} \subset \PP(2,2k+1,2k+1, 4k+1)$ of the wrong dimension. Indeed, by~\cite{MR873655} the only interesting cohomology group of $\overline{W}_{k, \alpha}$ occurs in degree $3$, hence by the Weil conjectures the main contribution to $|\overline{W}_{k, \alpha}(\FF_q)|$ comes from $H^3( \overline{W}_{k,\alpha}, \QQ)$. Presumably for this completion we also have $H^3( \overline{W}_{k,\alpha}, \QQ)=\mathrm{gr}^W_3 H^3(W_{k,\alpha}, \QQ)$ --- as we do in Lemma~\ref{lem:ptcpt}.


\section{Proof of Theorem~\ref{thm:2a}}
\label{sec:proof}

In this Section we prove Theorem~\ref{thm:2a}, stated in the Introduction.
\smallskip

Fix $k>0$ integer and let $U_k=\CC^\times \setminus \{\alpha_{k,0}\}$,  as in the Introduction. We prove that for all $\alpha \in U_k$ there is an isomorphism of pure Hodge structures:
\begin{equation} \label{eq:stalks}
\mathrm{gr}_3^W H^3_c (W_{k,\alpha},  \QQ) (1)= H^1(Y_{k,\alpha}, \QQ)
\end{equation}

\paragraph{Notation}
For convenience in what follows we set $a=1/\alpha$. We write Equation \eqref{eq:2a} as
\begin{equation}  \label{eq:3-fold-a} 
\left( u_1+u_2+u_3+u_4-1 -a {u_1}^2{u_2}^{2k+1}{u_3}^{2k+1}{u_4}^{4k+1}=0 \right) \quad \subset \TT^4 \times \CC^\times
\end{equation} 
and Equation \eqref{eq:1a} as
\begin{equation} \label{eq:curve-a} \left(a^2y^2-a^3h_{k,\frac{1}{a}}(t_0,t_1)=0 \right) \subset  \PP(1,1,3k+2) \times \CC^{\times}
\end{equation} where $h_{k,\alpha}$ is as in Equation \eqref{eq:1b}.  We denote by  $W_{k,a}$  and $Y_{k,a}$ the fibres of \eqref{eq:3-fold-a} and \eqref{eq:curve-a} over $a$, so $W_{k, a}=W_{k,\alpha}$ and $Y_{k, a}=Y_{k,\alpha}$ where $\alpha=1/a$.
\smallskip

For the rest of this Section we fix $k >0$ and $a \neq a_{k,0}$, where
$a_{k, 0}=1/\alpha_{k,0}$, and we we omit all reference to $k$ and
$a$. Also, we write
$p_{\CC^{\times}} \colon Y_{\CC^{\times}}=p^{-1}(\CC^{\times}) \to
\CC^{\times}$
for the restriction of the $2:1$ cover $p \colon Y \to \PP^{1}$.

\paragraph{Proof of Theorem~\ref{thm:2a}} Let $W\subset \widehat{W}$
be the partial compactification of Equation~\eqref{eq:widehat_W} and
$\phi\colon \widehat{W} \to \CC^\times$ the projective degree~2 del
Pezzo fibration of Lemma~\ref{lem:dPstructure}.
Applying in sequence Lemmas~\ref{lem:ptcpt}, \ref{lem:onthebase}, \ref{lem:final} we see:
\[
\gr^W_3 H^3_c(W, \QQ)=\gr^W_3 H^3_c(\widehat{W}, \QQ)=\gr^W_3 H^1_c(\CC^\times, R^2\phi_\star \QQ)=
\left(\gr^W_1 H^1_c (\CC^\times, p_{\CC^\times} \QQ)\right)(-1)
\]
and the latter group is $H^1(Y, \QQ)(-1)$. \qed

\paragraph{The partial compactification} We construct a partial
compactification
$\widehat{W}\subset \PP(1,1,2,2)/_{\mu_2} \times \CC^\times$ of $W$
(where $\mu_2$ acts on $\PP(1,1,2,2)$ as described
below). Lemma~\ref{lem:ptcpt} states that
$\mathrm{gr}_3^W H^3_c(W, \QQ)=\mathrm{gr}_3^W H^3_c(\widehat{W},
\QQ)$
hence for our purpose we might as well work with $\widehat{W}$ in
place of $W$. The advantage of working with $\widehat{W}$ is that, as
stated in Lemma~\ref{lem:dPstructure}, the second projection
$\phi \colon \widehat{W}\to \CC^\times$ is a fibration with fibres del
Pezzo surfaces, and the point of the substitution~\eqref{eq:change} is
precisely to make this structure manifest. (In
Appendix~\ref{sec:appendix} we explain how we discovered the del Pezzo
fibration structure, and the substitution, by the methods of the
minimal model program.)

\smallskip

Consider the weighted projective space $\PP(1,1,2,2)$ with weighted homogeneous coordinates $x_1,x_2,y_1,y_2$, and the quotient $\PP(1,1,2,2)/ \mu_2$ where the group $\mu_2$ acts on the affine coordinates $x=x_2/x_1, y=y_1/x_1^2, z=y_2/x_1^2$ by
\begin{equation} \label{eq:action} (x,y,z) \mapsto (-x,-y,-z)
\end{equation}
Here and in what follows we write
\begin{equation} \label{eq:G}
G= \PP(1,1,2,2)/ \mu_2 \times \CC^{\times}
\end{equation}
and we denote by $t$ the coordinate on $\CC^{\times}$. Note that $G$ is a (noncompact) toric variety and the 4-dimensional torus $\TT_G\subset G$ is the locus $\left(x,y,z,t\neq 0 \right)$ in the affine piece $x_1\neq 0$.
The substitution
\begin{equation} 
 \label{eq:change}  
 u_1=x^{-1} y \quad u_2=x^3z^{-1}t \quad u_3=x^{-1}z^{-1} \quad u_4=x^{-1}z
\end{equation} 
identifies $W$ with 
\begin{equation} 
  \label{eq:W_new_equation}  
 \left( -at^{2k+1} y^2 +yz +z^2 +1 -xz +tx^4=0\right)/\mu_2 \subset \TT_G
\end{equation}
We denote by $\widehat{W}$ the closure of $W$ in $G$, given by the weighted homogeneous equation:
\begin{equation} 
 \label{eq:widehat_W} 
  \left( -at^{2k+1} y_1^2 +y_1y_2 +y_2^2 +x_1^4 -x_1x_2y_2 +tx_2^4=0 \right) / \mu_2 \subset G
\end{equation}
The \mbox{$3$-fold} $\widehat{W}$ is a partial compactification of $W$. 

\begin{rem}
  The \mbox{$3$-fold} $\widehat{W}$ is quasismooth and we think at it as a
  smooth orbifold.  More precisely $\widehat{W}$ has nonisolated
  quotient singularities: although it is singular, it is a rational
  homology manifold.  Because of this, for the purpose of
  cohomological computations, we can pretend that $\widehat{W}$ is
  smooth. Below we take the convention that the set of regular values
  of the map $\phi \colon \widehat{W} \to \CC^\times$ is the set of
  values $t \in \CC^\times$ such that the corresponding fibre
  $\widehat{W}_t$ is quasismooth.
\end{rem}

\begin{lem} 
 \label{lem:ptcpt}  
  There is an identity of pure Hodge structures:
  \begin{equation} \label{eq:gr_3}
    \mathrm{gr}_3^W H^3_c(W, \QQ)=\mathrm{gr}_3^W H^3_c(\widehat{W}, \QQ)
  \end{equation}  
  \end{lem}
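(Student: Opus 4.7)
The plan is to apply excision to the open embedding $j \colon W \hookrightarrow \widehat{W}$ with closed complement $i \colon D = \widehat{W}\setminus W \hookrightarrow \widehat{W}$. The distinguished triangle $j_!\QQ_W \to \QQ_{\widehat{W}} \to i_\star\QQ_D$ gives a long exact sequence of mixed Hodge structures on compactly supported cohomology
\[
\cdots \to H^{i-1}_c(D,\QQ) \to H^i_c(W,\QQ) \to H^i_c(\widehat{W},\QQ) \to H^i_c(D,\QQ) \to \cdots,
\]
and since $\mathrm{gr}^W_3$ is exact on mixed Hodge structures, identity \eqref{eq:gr_3} reduces to two statements: (a) $\mathrm{gr}^W_3 H^2_c(D,\QQ) = 0$, and (b) the restriction map $\mathrm{gr}^W_3 H^3_c(\widehat{W},\QQ) \to \mathrm{gr}^W_3 H^3_c(D,\QQ)$ is zero.

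For (a), the plan is to invoke Deligne's standard weight bound: for any complex algebraic variety $X$, the mixed Hodge structure on $H^i_c(X,\QQ)$ carries weights at most $i$ (proved via cohomological descent from a smooth simplicial resolution). Since $\dim D = 2$, applying this at $i=2$ gives weights at most $2$ on $H^2_c(D,\QQ)$, so the weight-three graded vanishes automatically.

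For (b), the plan is to unwind $D$ explicitly. From the substitution \eqref{eq:change}, the complement of the torus $\TT_G \subset G = \PP(1,1,2,2)/\mu_2 \times \CC^\times$ is the union of the four toric divisors $\{x_\nu=0\}$ and $\{y_\nu=0\}$ for $\nu=1,2$; accordingly $D = \bigcup_{\nu=1}^{4} D_\nu$ where each $D_\nu$ is a surface cut out in a toric threefold by setting one coordinate to zero in \eqref{eq:widehat_W}. The cohomological descent spectral sequence for this closed cover, combined with the same weight bound $H^i_c \le i$ used in (a), identifies the weight-three graded of $H^3_c(D)$ as a subquotient of $\bigoplus_\nu \mathrm{gr}^W_3 H^3_c(D_\nu)$, so (b) reduces to showing that the restriction $\mathrm{gr}^W_3 H^3_c(\widehat{W}) \to \mathrm{gr}^W_3 H^3_c(D_\nu)$ vanishes for each $\nu$ individually.

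The main obstacle is this last vanishing, which cannot be settled by a pure dimension count: a component such as $D_{y_1}$, which after setting $y_1=0$ in \eqref{eq:widehat_W} becomes a fibration of elliptic curves over $\CC^\times$, genuinely carries nontrivial weight-three classes in its $H^3_c$. The plan for handling it is to exploit the del Pezzo fibration $\phi \colon \widehat{W} \to \CC^\times$ of Lemma \ref{lem:dPstructure}: via the Leray spectral sequence for $\phi$, the weight-three graded $\mathrm{gr}^W_3 H^3_c(\widehat{W})$ is concentrated in the piece coming from $H^1_c(\CC^\times, R^2\phi_\star\QQ)$, and one then checks by a direct transversality argument at the generic fibre that the restriction of such relative classes to each boundary surface $D_\nu$ factors through classes that vanish identically, giving the required zero map.
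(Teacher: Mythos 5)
Your setup for step (a) is exactly the paper's: the long exact sequence of compactly supported cohomology attached to the decomposition $W = \widehat{W}\setminus D$, combined with Deligne's weight bound that the weights of $H^i_c$ are at most $i$ (so $H^2_c(D,\QQ)$ has weights $\le 2$). Your reduction of (b) to the individual components $D_\nu$ via the simplicial resolution of the simple normal crossing divisor $D$ also matches the paper.

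But your treatment of (b) rests on a false claim, and everything downstream of it collapses. You assert that $D_{y_1}$ (the paper's $D_3 = \widehat{W}\cap(y_1=0)$), being a fibration of genus-one curves over $\CC^\times$, ``genuinely carries nontrivial weight-three classes in its $H^3_c$.'' It does not. The paper proves the cleaner and stronger statement $\mathrm{gr}_3^W H^3_c(D_\nu,\QQ)=0$ for each $\nu$, so there is no restriction map to kill and hence no need for the unspecified ``direct transversality argument'' you are gesturing at. Three of the four components are isomorphic to $\PP^1\times\CC^\times$, whose $H^3_c$ is pure of weight $2$. For $D_3$ the genus-one fibres are a red herring: the weight-three part of $H^3_c$ of a surface is controlled by the global geometry of the total space, not by the fibres. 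Concretely, after completing the square in $y_2$ and taking the $\mu_2$-quotient, $D_3$ sits as an open subset of a surface $\overline{D}_3$ which is a double cover of $\PP^1\times\PP^1$ branched along a divisor of bidegree $(4,2)$, hence a rational surface; therefore $H^3(\overline{D}_3,\QQ)=0$, and the long exact sequence of the pair $(\overline{D}_3,D_3)$ forces $H^3_c(D_3,\QQ)$ to have weights $\le 2$. (Equivalently: $\mathrm{gr}_3^W H^3_c$ of a smooth open surface is, up to duality and Tate twist, the weight-one part of $H^1$, which injects into the $H^1$ of any smooth compactification --- a birational invariant that vanishes for rational surfaces.) This rationality observation is the one genuinely nontrivial ingredient in the proof, and your proposal does not contain it; instead it posits the opposite and then tries to argue around a problem that is not actually there.
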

\begin{proof}
Note first that the \mbox{$3$-fold} $W$ is nonsingular but noncompact, thus $H^3_c (W, \QQ)$ is a mixed Hodge structure with weights $\leq 3$, and so is $H^3_c (\widehat{W}, \QQ)$. Consider the divisor $D= \widehat{W} \setminus W$, and denote by $i\colon D \hookrightarrow \widehat{W}$  and $j\colon W  \hookrightarrow  \widehat{W}$ the inclusions. We have a  long exact sequence of mixed Hodge structures
    \begin{equation*} 
     \cdots \to H^2_c(D, \QQ)  \to H^3_c(W,\QQ) \to H^3_c(\widehat{W}, \QQ) \to H^3_c(D,\QQ)  \to \cdots 
\end{equation*}
To prove~\eqref{eq:gr_3}, we check that $\mathrm{gr}_3^W H^2_c(D, \QQ)=\mathrm{gr}_3^W  H^3_c(D,\QQ)=(0)$.
To this end, we study the geometry of the surface $D$; $D$ is the union $
D=\bigcup_{i=1}^4 \  D_i$, where: 
\[
D_1=\widehat{W}\cap \left (x_1=0\right), \quad  
D_2=\widehat{W}\cap \left (x_2=0\right), \quad 
D_3=\widehat{W}\cap \left (y_1=0\right), \quad 
D_4=\widehat{W}\cap \left (y_2=0\right)
\]
One can check that $D \subset \widehat{W}$ is (locally the quotient of) a simple normal crossing divisor with no $0$-dimensional stata. By setting 
\[
D^{[1]}= \bigsqcup_{i} D_{i} \quad \text{and} \quad D^{[2]}= \bigsqcup_{i < j} D_{ij}, \quad \text{with} \ D_{ij}= D_i \cap D_j
\] 
we get a strict simplicial resolution $D^{[2]} \rightrightarrows D^{[1]} \rightarrow D$ and the long exact sequence:
\begin{equation} 
\phantomsection \label{eq:ssr_sequence} 
 \cdots \to  \bigoplus\limits_i H^{m-1}_c(D_i, \QQ)   \to \bigoplus\limits_{i <j} H^{m-1}_c(D_{ij}, \QQ)  \to H^m_c(D, \QQ)   \to    \bigoplus\limits_i H^{m}_c(D_i, \QQ) \to \cdots 
\end{equation} 
It follows that $H^2_c(D, \QQ)$ has weights $\leq 2.$ Now choose $m=3$ in \eqref{eq:ssr_sequence} and examine $H^3_c(D, \QQ)$. 
On the left hand side,  $\bigoplus_{i <j} H^{2}_c(D_{ij}, \QQ)$ has weights $\leq 2.$ On the right hand side, $\bigoplus_i H^{3}_c(D_i, \QQ)$ a priori has weights $\leq 3$. To conclude, we next show that, in fact, for $i=1,2,3,4,$ $H^{3}_c(D_i, \QQ)$ has weights $<3$. Consider first the surface $D_1$, given by
\begin{equation*} 
\left( -at^{2k+1} y_1^2 +y_2^2 +y_1y_2  +tx_2^4 =0\right)/ \mu_2  \subset \PP(1,2,2) / \mu_2 \times \CC^{\times}
\end{equation*}
This is the same as the surface 
\begin{equation} 
\label{eq:D_1new} 
\left( -at^{2k+1} y_1^2 +y_2^2 +y_1y_2  +tz_2^2 =0 \right)/\mu_2 \subset  \PP^2 / \mu_2\times \CC^\times 
\end{equation} 
where $z_2,y_1,y_2$ are homogeneous coordinates of $\PP^2$ and  $\mu_2$ acts as $(y_1,y_2)\mapsto (-y_1,-y_2)$ on the affine piece $\left(z_2=1\right)$. Note that the quotient  of  $\PP^2$ by the  $\mu_2$-action is the weighted projective space $\PP(1,1,2)$ with homogeneous coordinates $y_1,y_2, w_2=z_2^2$. In $\PP(1,1,2)$~\eqref{eq:D_1new} becomes
\begin{equation*} 
\left(  at^{2k+1} y_1^2 -y_2^2 -y_1y_2 = tw_2 \right) \subset \PP(1,1,2) 
\end{equation*}  
thus, since $t \neq 0$,  we conclude that $D_1 \simeq \PP^1 \times \CC^{\times}$ and then $H^{3}_c(D_1, \QQ)$ is a pure Hodge structure of weight $2$.
An almost identical argument holds for $D_2.$
The surface $D_4$ is given by
\begin{equation*} \left(-at^{2k+1} y_1^2 +x_1^4+tx_2^4 =0\right)/\mu_2 \subset \PP(1,1,2) / \mu_2 \times \CC^{\times}
\end{equation*} which  is the same as the surface
\begin{equation*} \left(-at^{2k+1} w_1 +z_1+tz_2 =0\right) \subset \PP^2 \times \CC^{\times}
\end{equation*} where $z_1,z_2,w_1$ are homogeoneous coordinates on $\PP^2,$ thus also $D_4 \simeq \PP^1 \times \CC^{\times}$ and $H^{3}_c(D_4, \QQ)$ is a pure Hodge structure of weight $2$. To conclude, consider the surface $D_3$, given by
\begin{equation*}
  \left(y_2^2 +x_1^4 -x_1x_2y_2 +tx_2^4 =0\right)/ \mu_2  \subset \PP(1,1,2) / \mu_2 \times \CC^{\times}
\end{equation*}
By means of the substitution $y_2\mapsto y_2 - x_1x_2 /2$ write this as
\begin{equation*}
  \left(y_2^2 +x_1^4 -\frac{x_1^2x_2^2}{4}+tx_2^4 =0\right)/ \mu_2  \subset \PP(1,1,2) / \mu_2 \times \CC^{\times}
\end{equation*}
and note  that this is the same as the surface
\begin{equation} \label{2:1coverD_3}
  \left(w_2^2 =z_1z_2 ( -z_1^2 +\frac{z_1z_2}{4}-tz_2^2 ) \right) \subset \PP(1,1,2) \times \CC^{\times}
\end{equation}
where $z_1=x_1^2,z_2=x_2^2,w_2=y_2^2$ are homogeneous coordinates on $\PP(1,1,2)$. A natural compatification of \eqref{2:1coverD_3} is the surface $\overline{D}_3$ given by
\begin{equation*} 
  \left(w_2^2 =z_1z_2(-t_0^2z_1^2 +t_0^2\frac{z_1z_2}{4}-t_0t_1z_2^2) \right) \subset \PP(1,1,2) \times \PP^1
\end{equation*}
Note that  $\overline{D}_3$ is a $2:1$ cover of $\PP^1\times \PP^1$ branched along a divisor in $|\mathcal{O}_{\PP^1 \times \PP^1}(4,2)|$, thus $\overline{D}_3$ is a rational surface and hence $H^3(\overline{D}_3, \QQ)=\left(0\right)$. Then $ H^3({D}_3, \QQ)$  has weights $\leq 2$, since, by setting $\Gamma=\overline{D}_3 \setminus D_3$, we have the long exact sequence of mixed Hodge structures
\begin{equation*}  
     \cdots \to H^{2}_c(\Gamma, \QQ)   \to  H^{3}_c(D_3, \QQ)  \to H^3(\overline{D}_3, \QQ)   \to    \cdots 
\end{equation*}  and $\mathrm{gr}_3^W H^2({\Gamma}, \QQ) =\left( 0 \right)$.
\end{proof}

\begin{lem}
  \label{lem:dPstructure}
  Let $\phi\colon \widehat{W} \rightarrow \CC^{\times}$ be the
  projection onto the second factor. Denote by $\Delta$ the set of
  critical values of $\phi$ and write
  $\Omega=\CC^\times \setminus \Delta$ for the set of regular
  values. Let $j \colon \Omega \hookrightarrow \CC^\times$ be the
  natural inclusion and denote by
  $\phi_{\Omega}: \widehat{W}_\Omega=\phi^{-1}(\Omega) \to \Omega$ the
  induced morphism. Let $\delta_1, \delta_2, \delta \in \CC[t]$ be the
  polynomials:
\begin{equation} \label{eq:delta_12}
    \delta_1(t)=4at^{2k+1}+1 \qquad \delta_2(t)=a^2t^{4k+2} -4t\left(4at^{2k+1}+1\right)^2  \qquad \delta= \delta_1 \cdot \delta_2
\end{equation} Write $\Omega_1=\CC^\times \setminus \{\delta_1=0\}$,
$\widehat{W}_{\Omega_1}=\phi^{-1}(\Omega_1)$. Denote by $K$ the
function field $k(\CC^\times)=\CC(t)$. Then:
\begin{enumerate}[(1)]
\item $\Delta = \{ \delta =0\}$. If $t$ is a root of $\delta_1$, $\widehat{W}_t$ has a unique non quasismooth point $p_t=(0:0:-2:1)$; if $t$ is a root of $\delta_2$, $\widehat{W}_t$ has a unique non quasismooth point
\begin{equation} \label{eq:q_t}
  q_t= \left(1:\sqrt{\frac{2 \delta_1(t)}{at^{2k+1}}}:\frac{1}{\delta_1(t)}: \frac{2at^{2k+1}}{\delta_1(t)}\right)
\end{equation}
In both cases the non quasismooth point is an ordinary double point. 
\item After the change of coordinates:
\begin{equation} \label{eq:newy}
  y_1 \mapsto \frac{y_1}{2}-\frac{x_1x_2}{2(4at^{2k+1}+1)} \quad
  \text{and} \quad y_2  
\mapsto \frac{y_1}{2}+y_2-\frac{x_1x_2}{2}
\end{equation} 
the equation of the fibre $\widehat{W}_t$ over $t \in \Omega_1$ is:
\begin{equation} \label{eq:notdelta_1}
\left( -(4at^{2k+1} +1)y_1^2 +y_2^2 +x_1^4 +tx_2^4-\frac{at^{2k+1}}{4at^{2k+1}+1} (x_1x_2)^2=0 \right)/\mu_2 \subset \PP(1,1,2,2) /\mu_2
\end{equation}   
\item
  For all $t \in \Omega$ the fibre  $\widehat{W}_t$ is a quasismooth del Pezzo surface of degree $2$ with two $1/4 \cdot (1,-1)$ points $p^{+}_t, p^{-}_t$, on $(x_1=x_2=0)$, and intersecting $(y_1=y_2=0)$ in two points $q^{+}_t$ and $q^{-}_t$.  In the coordinates of Equation \eqref{eq:notdelta_1},
  \begin{equation}  \label{eq:pm}  p_t^{\pm}=\left( 0:0:1: \pm \frac{\sqrt{\delta_1(t)}}{2} \right) 
  \quad \text{and}  \quad q_t^\pm=\left(\sqrt{\frac{at^{2k+1}\pm \sqrt{\delta_2(t)}}{2 \cdot \delta_1(t)}}:1:0:0 \right)
  \end{equation}
\item For all $t \in \Omega$ the fibre $\widehat{W}_t$ has Picard rank $r=h^2(\widehat{W}_t,\QQ)=2$. More specifically, $\widehat{W}_t$ contains a configuration of lines as pictured in Figure~\ref{fig:FibrePic}, and a basis of $\Pic (\widehat{W}_t)_{\QQ}$ is given the classes of the curves $C_{t,1}$ and $C_{t,2}$.
\item In the variables of Equation \eqref{eq:notdelta_1}, the
  restriction $\phi_1 \colon Y_1=\widehat{W} \cap  \left(
    x_1=x_2=0\right)  \to \CC^\times$ is a $2:1$ branched cover with branch locus $\{\delta_1=0\}$, and the restriction $\phi_2 \colon Y_2=\widehat{W}_{\Omega_1}  \cap  \left( y_1=y_2=0\right)  \to \Omega_1$ is a $2:1$ branched cover with branch locus $\{\delta_2=0\}$. In particular,
\begin{equation} \label{eq:preimage}
    \phi_1^{-1}(t)= \left\{ \begin{array}{ll}
p^{\pm}_t & \textrm{if $\delta_1(t) \neq 0$}\\
p_t & \textrm{if $\delta_1(t) = 0$}
                            \end{array} \right.
      \quad \text{and} \quad
\phi_2^{-1}(t)=\left\{ \begin{array}{ll}
q^{\pm}_t & \textrm{if $\delta_2(t) \neq 0$}\\
q_t & \textrm{if $\delta_2(t) = 0$}
 \end{array} \right.
   \end{equation}
                                             
\item The Picard rank of $\widehat{W}_{K}$ is $\rho=1$.
\item $R^3\phi_{\star}\QQ = R^1\phi_{\star} \QQ=(0)$. 
\item The natural homomorphism $
R^2 \phi_\star \QQ\to j_\star j^\star R^2 \phi_\star \QQ =  j_\star R^2 \phi_{\Omega\, \star} \QQ 
$
is an isomorphism.
 \end{enumerate}
\end{lem}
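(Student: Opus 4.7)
The plan is to work through items (1)--(8) in order, all by direct analysis of the defining equation~\eqref{eq:widehat_W}, with $\widehat{W}$ treated throughout as a quasismooth orbifold.

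\textbf{Computational backbone: (1), (2), (3), (5).} For (1), I compute the four partials of
\[
F = -at^{2k+1}y_1^2 + y_1y_2 + y_2^2 + x_1^4 - x_1x_2y_2 + tx_2^4
\]
and look for common zeros with $F=0$. The equations $\partial F/\partial y_1 = \partial F/\partial y_2 = 0$ yield $y_2 = 2at^{2k+1}y_1$ and $y_1\,\delta_1(t) = x_1 x_2$; combining with $\partial F/\partial x_1 = \partial F/\partial x_2 = 0$ and $F = 0$ splits into two subcases: $\delta_1(t)=0$ (forcing $x_1=x_2=0$ and giving $p_t$), or $\delta_1(t)\neq 0$ (forcing $\delta_2(t)=0$ and giving $q_t$). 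A Hessian check at each point confirms the ordinary double point claim. Part (2) follows by successively completing the square: $y_2 \mapsto y_2 + \tfrac12 x_1 x_2$ eliminates the cross term, and a subsequent linear substitution in $(y_1, y_2)$ diagonalises the resulting quadratic form. Part (3) follows from (1)--(2), noting that $\PP(1,1,2,2)/\mu_2$ has $\frac{1}{4}(1,-1)$ quotient singularities precisely along $(y_1=y_2=0)$, which $\widehat{W}_t$ meets transversely. Part (5) is immediate from substituting $x_1 = x_2 = 0$ or $y_1=y_2=0$ into~\eqref{eq:notdelta_1}.

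\textbf{Global data: (4) and (6).} The curves $C_{t,1}, C_{t,2}$ of Figure~\ref{fig:FibrePic} are linearly independent in $\Pic(\widehat{W}_t)_\QQ$. To show they span $H^2(\widehat{W}_t,\QQ)$, I compute $\chi_\text{top}(\widehat{W}_t)$ via its minimal resolution (two exceptional $\PP^1$'s over the $\frac{1}{4}(1,-1)$ points) and use the rationality of that resolution together with Hodge symmetry to conclude $h^{1,1}=2$ and $h^{0,2}=0$. For (6), the polynomials $\delta_1 = 4at^{2k+1}+1$ and $\delta_2 = a^2 t^{4k+2} - 4t(4at^{2k+1}+1)^2$ have odd degree in $t$, hence are not squares in $\CC[t]$; the associated double covers are connected, so the Galois group of $K^\text{alg}/K$ swaps $C_{t,1}\leftrightarrow C_{t,2}$, leaving only their sum defined over $K$.

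\textbf{Direct images: (7) and (8).} By proper base change, $R^i\phi_\star\QQ$ has stalk $H^i(\widehat{W}_t,\QQ)$. Every fibre (quasismooth or nodal) is a rational orbifold surface---its equation presents it as a conic bundle in $(y_1, y_2)$ over the $\PP^1$ with coordinates $(x_1, x_2)$---so $H^1=H^3=0$, giving (7). For (8), since $\widehat{W}$ is quasismooth and each degeneration is locally Morse-type (from the Hessian calculation in (1)), Deligne's local invariant cycle theorem yields that the specialisation map $H^2(\widehat{W}_{t_0},\QQ)\to H^2(\widehat{W}_\text{nearby},\QQ)^{T_{t_0}}$ is an isomorphism at each critical $t_0$, which is precisely the claim that the adjunction morphism is an isomorphism.

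\textbf{Main obstacle.} The most delicate step is the orbifold Picard rank computation in (4): one must carefully resolve the two $\frac{1}{4}(1,-1)$ singularities and check that the lifts of $C_{t,1}, C_{t,2}$ together with the two exceptional $\PP^1$'s span $H^2$ of the resolution, using Noether's formula to pin down $\chi_\text{top}$. Given (4), items (6) and (8) follow by standard Galois and local invariant cycle arguments respectively.
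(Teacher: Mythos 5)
Your items (1), (2), (5) and (7) follow essentially the paper's route and are fine, but there are genuine gaps in (3), (4) and (8). In (3) you never prove the substantive claim that $\widehat{W}_t$ is a del Pezzo surface of degree $2$: the paper does this by passing to the double cover $\widehat{V}_t=\sigma^{-1}(\widehat{W}_t)\subset\PP(1,1,2,2)$, a quasismooth quartic with $-K_{\widehat{V}_t}=\oo(2)$, $K^2_{\widehat{V}_t}=4$ and two $\frac{1}{2}(1,1)$ points, and then uses that $\sigma$ is $2:1$ and \'etale in codimension one to get $K^2_{\widehat{W}_t}=2$. You also misplace the orbifold locus of the ambient space: the transverse $\frac{1}{4}(1,-1)$ singularities of $\PP(1,1,2,2)/\mu_2$ lie along $(x_1=x_2=0)$ (which is where $p_t^\pm$ sit, as the statement says), not along $(y_1=y_2=0)$, which instead contains the two isolated $\frac{1}{2}(1,1,1)$ points that $\widehat{W}_t$ misses. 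More seriously, your argument for (4) rests on the claim that the minimal resolution has ``two exceptional $\PP^1$'s'' over the two $\frac{1}{4}(1,-1)$ points. A $\frac{1}{4}(1,-1)$ point is the $A_3$ Du Val singularity, whose minimal (crepant) resolution has a chain of three $(-2)$-curves, so there are \emph{six} exceptional curves in total; with your count the Euler-characteristic/Noether bookkeeping produces the wrong $h^2$ (and running Noether at all requires $K^2_{\widehat{W}_t}=2$, i.e.\ the part of (3) you skipped). The paper instead identifies $\widetilde{W}_t$ as a smooth weak del Pezzo of degree $2$, hence of Picard rank $8$ by Demazure, and concludes $\rho(\widehat{W}_t)=8-6=2$. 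Finally, you only assert that $C_{t,1},C_{t,2}$ are independent: that they form a \emph{basis} needs the relations $C_{t,1}\sim C_{t,4}$ and $C_{t,2}\sim C_{t,3}$, which the paper obtains from an explicit factorisation of \eqref{eq:notdelta_1} (giving the equations of the four lines in terms of $\sqrt{\delta_1(t)}$ and $\nu_\pm(t)$) and a conic bundle on $\widetilde{W}_t$. These relations, and the explicit Galois action on the four lines, are exactly what your sketch of (6) silently uses when you claim that the Galois group ``swaps $C_{t,1}\leftrightarrow C_{t,2}$''; as written, (6) hangs on data you have not established.

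For (8), the local invariant cycle theorem only gives \emph{surjectivity} of $H^2(\widehat{W}_s,\QQ)\to H^2(\widehat{W}_t,\QQ)^{T_s}$; you still need injectivity. The paper gets it from the triangle $\QQ\to\psi\,\QQ\to\varphi\,\QQ$: since the singular fibre has a single isolated (hyperquotient) ordinary double point, $\varphi\,\QQ$ is supported at that point and concentrated in degree $2$, so $H^1(\widehat{W}_s,\varphi\,\QQ)=0$ and $H^2(\widehat{W}_s,\QQ)\to H^2(\widehat{W}_t,\QQ)$ is injective. Your ``locally Morse-type'' remark gestures at this local structure but no injectivity argument is actually given, and one should also acknowledge that both the total space and the degeneration are only quasismooth/hyperquotient, which is why the orbifold (rational homology manifold) framework is invoked.
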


 \begin{figure}[!ht] \centering
   \begin{tikzpicture}[ scale=1] 
     \clip(-4,-3) rectangle (4,3);
  \draw[fill, ultra thick] (0,2) circle [radius=0.08]; 
  \draw[fill, ultra thick] (0,-2) circle [radius=0.08]; 
  \draw[] (-2,0) circle [radius=0.05]; 
  \draw[] (2,0) circle [radius=0.05]; 
  \node at (0.1,2.7) {$p_t^{+}$};
  \node at (0.1,-2.7) {$p_t^{-}$};
  \node at (2.7, 0) {$q_t^{+}$};
  \node at (-2.7,0) {$q_t^{-}$};
  \draw[thick] (-2.5,-0.5) -- (0.5,2.5); 
  \draw[thick]  (-0.5,2.5) -- (2.5,-0.5); 
  \draw[thick]  (-2.5,0.5) -- (0.5,-2.5); 
  \draw[thick](-0.5,-2.5) -- (2.5,0.5); 
   \draw[] (-0.8, 0.8) -- (-1.2,1.2);
   \draw[] (0.8, -0.8) -- (1.2,-1.2);
   \draw[] (-0.8, -0.8) -- (-1.2,-1.2);
  \draw[] (-0.8, -0.6) -- (-1.2,-1);
   \draw[] (0.8, 0.8) -- (1.2,1.2);
   \draw[] (0.8, 0.6) -- (1.2, 1);
   \node at  (-1.5,1.5) {$C_{t,1}$};
    \node at  (1.5,1.5) {$C_{t,2}$};
     \node at  (-1.5,-1.5) {$C_{t,3}$};
      \node at  (1.5,-1.5) {$C_{t,4}$};
 \end{tikzpicture}
 \caption{A quasismooth fibre $\widehat{W}_t$ and the four points $p_t^{\pm} , q_t^{\pm}$ in $\widehat{W}_t$. Numerically equivalent curves on $\widehat{W}_t$ are marked with the same symbol.} \label{fig:FibrePic}
\end{figure}
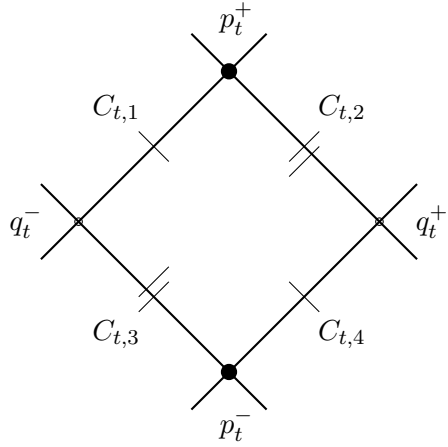

\begin{proof}
To prove~(1), fix $t \in \CC^\times$ and compute the Jacobian of the polynomial in Equation \eqref{eq:widehat_W}. To prove that for all $t \in \Delta$ the non quasismooth point of $\widehat{W}_t$ is an ordinary double point, one can check that the Hessian of \eqref{eq:widehat_W} at that point is invertible.  

Assertion~(2) is a simple substitution.  

To prove~(3), fix  $t \in \Omega$ and consider the quotient map $\sigma \colon \PP(1,1,2,2) \to \PP(1,1,2,2)/\mu_2$. Let $\widehat{V}_t= \sigma^{-1}(\widehat{W}_t)$ be the preimage. Note that:
\[
 -K_{\widehat{V}_t}=\sigma^{\star}\left(-K_{\widehat{W}_t}\right) \quad \text{and} \quad {K^2_{\widehat{V}_t}}=2 \cdot  {K^2_{\widehat{W}_t}}
\]
since $\sigma$ is $2:1$ and et\'ale in codimension 1. We have $-K_{\widehat{V}_t}=\mathcal{O}(2)$  and $ -K_{\widehat{V}_t}^2=4$, thus $\widehat{V}_t \subset \PP(1,1,2,2)$ is a del Pezzo surface of degree $4$. The orbifold points of $\widehat{V}_t$, of type $1/2 \cdot (1,1)$, are the two points $ P_t^{\pm}$ of $\widehat{V}_t$ on the line $\left(x_1=x_2=0\right)$; in the coordinates of Equation \eqref{eq:notdelta_1},   
\[
  P_t^{\pm}= \left(0:0:1: \pm \frac{\sqrt{\delta_1(t)}}{2}\right)
\]
Note that, by \eqref{eq:action}, on the affine piece $(y_1=1)$ the group $\mu_2$ acts by \[(x_1,x_2,y_2) \to (ix_1,-ix_2,y_2)\] Hence $\widehat{W}_t \subset \PP(1,1,2,2)/\mu_2$ is a del Pezzo surface of degree $2$ with two $1/4 \cdot (1,-1)$ points  $p^{\pm}_t=\sigma(P_t^{\pm})$, as in \eqref{eq:pm}; these are the only orbifold points of $\widehat{W}_t$ since $(1:0:0:0)$ and $(0:1:0:0)$ do not satisfy \eqref{eq:notdelta_1}. Setting $y_1=y_2=0$ in Equation \eqref{eq:notdelta_1}, one finds that $\widehat{W}_t \cap \left( y_1=y_2=0 \right)$ is given by the two points $q_t^{\pm}$ in \eqref{eq:pm}.

To prove~(4), note that the crepant resolution  $\widetilde{W}_t$ of $\widehat{W}_t$ is a smooth weak del Pezzo surface of degree $2$. By Demazure's Theorem this is the blow-up of $\PP^2$ in $7$ nongeneral points, thus it has Picard rank $r=8$. Then the surface $\widehat{W}_t$, obtained by blowing down $6$ exceptional curves on $\widetilde{W}_t$, has Picard rank $r=2$. 
We explain how the geometry of $\widehat{W}_t$ singles out a distinguished basis of generators of ${\mathrm{Pic}(W_t)}_{\QQ}$. Setting \[\nu_{\pm}(t)= \sqrt{\frac{at^{2k+1} \pm \sqrt{\delta_2(t)}}{ 2 \cdot \delta_1(t)}}\] on $\PP(1,1,2,2)$ Equation \eqref{eq:notdelta_1} factors as:
  \[(y_2 - \sqrt{\delta_1(t)}y_1) (y_2 +\sqrt{\delta_1(t)}y_1) +\left(x_1 -\nu_{+}(t)  x_2\right)\left(x_1 + \nu_{+}(t)  x_2\right)\left(x_1 - \nu_{-}(t)  x_2\right)\left(x_1 +\nu_{-}(t)  x_2\right)=0
  \]
This exhibits eight lines on $\widehat{V}_t$, four of which passing through $P_t^{+}$, the other four passing thorugh $P_t^{-}$, as pictured on the left of Figure~\ref{fig:3surfaces}.
The four lines on $\widehat{V}_t$ through $P_t^{+}$ correspond to two orbits under the $\mu_2$ action, and the same holds for the four lines through $P_t^{-}$. Namely, in $\PP(1,1,2,2) /\mu_2$  Equation \eqref{eq:notdelta_1} can only be factored as:
\[
  (y_2 - \sqrt{\delta_1(t)}y_1)(y_2 +\sqrt{\delta_1(t)}y_1) +\left(x^2_1 -\nu_{+}^2(t)  x^2_2\right)\left(x^2_1 - \nu_{-}^2(t)  x^2_2\right)=0
\]
This exhibits four lines on the surface $\widehat{W}_t$, as pictured in the middle of Figure~\ref{fig:3surfaces}, two of which passing through $p_t^{+}:$
\begin{equation} \label{eq:above}
 C_{t,1}\colon   \left(y_2 - \sqrt{\delta_1(t)}y_1=x^2_1 -\nu_{+}^2(t) x^2_2=0 \right)  \qquad C_{t,2}\colon \left( y_2 - \sqrt{\delta_1(t)}y_1=x^2_1 \nu_{-}^2(t) x^2_2=0\right)
\end{equation}
the other two passing through $p_t^{-}$:
\begin{equation} \label{eq:below}
C_{t,3} \colon \left(y_2 + \sqrt{\delta_1(t)}y_1=x^2_1 -\nu_{+}^2(t) x^2_2=0 \right) \qquad  \ C_{t,4} \colon \left( y_2 + \sqrt{\delta_1(t)}y_1=x^2_1 - \nu_{-}^2(t) x^2_2=0\right)
\end{equation} 
To see that $C_{t,1} \sim C_{t,4}$ note that, on $\widetilde{W}_t$,
the union of the strict transform $\widetilde{C}_{t,1}$ and the three
exceptional curves above $p_t^{+}$, as pictured on the right of
Figure~\ref{fig:3surfaces}, supports a fibre of a conic bundle where 
another fibre is supported on the union of the strict transform
$\widetilde{C}_{t,4}$ and the three exceptional curves above
$p_t^{-}$. Taking direct image this implies indeed that $C_{t,1} \sim
C_{t,4}$. Similarly, $C_{t,2} \sim C_{t,3}$. Then the two curves $C_{t,1}$ and $C_{t,2}$
form a basis of $\mathrm{Pic}(\widehat{W}_t) _{\mathbb{Q}}.$

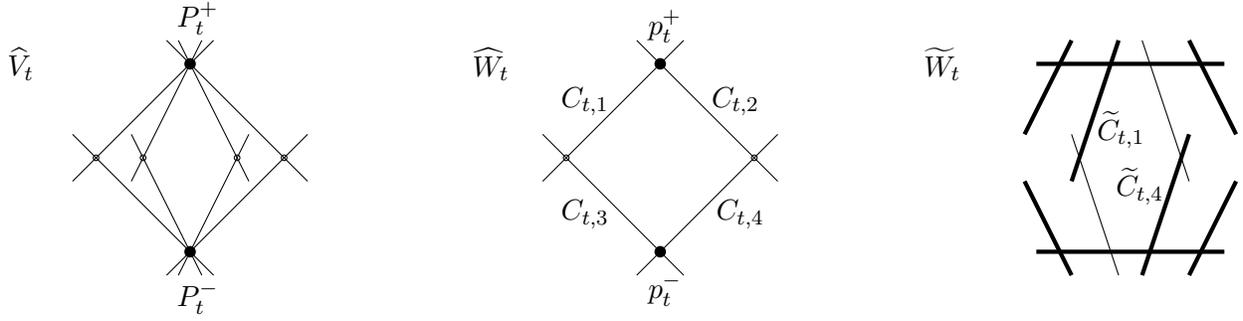
\begin{figure}[!ht] \centering 
\begin{tikzpicture} [scale=1.25]
\clip(-7,-2) rectangle (7,2);
  \draw[fill, thick] (0,1) circle [radius=0.05]; 
  \draw[fill, thick] (0,-1) circle [radius=0.05]; 
  \draw[] (-1,0) circle [radius=0.03]; 
  \draw[] (1,0) circle [radius=0.03]; 
  \node at (0.05,1.4) {$p_t^{+}$};
  \node at (0.05,-1.4) {$p_t^{-}$};
  
  \draw[] (-1.25,-0.25) -- (0.25,1.25); 
  \draw[] (-0.25,1.25) -- (1.25,-0.25); 
  \draw[]  (-1.25,0.25) -- (0.25,-1.25); 
  \draw[](-0.25,-1.25) -- (1.25,0.25); 

  \node at  (-0.8,0.6) {$C_{t,1}$};
    \node at  (0.8,0.6) {$C_{t,2}$};
     \node at  (-0.8,-0.6) {$C_{t,3}$};
      \node at  (0.85,-0.6) {$C_{t,4}$};
      \node at (-1.8, 1) {$\widehat{W}_t$};
      
  \draw[fill, thick] (-5,1) circle [radius=0.05]; 
  \draw[fill,  thick] (-5,-1) circle [radius=0.05]; 
   \draw[] (-4,0) circle [radius=0.03]; 
  \draw[] (-6,0) circle [radius=0.03]; 
  \draw[] (-5.5,0) circle [radius=0.03]; 
  \draw[] (-4.5,0) circle [radius=0.03]; 
   \node at (-4.92,1.49) {$P_t^{+}$};
  \node at (-4.92,-1.49) {$P_t^{-}$};

   \draw[] (-6.25,-0.25) -- (-4.75,1.25); 
  \draw[] (-5.25,1.25) -- (-3.75,-0.25); 
  \draw[]  (-3.75,0.25) -- (-5.25,-1.25); 
  \draw[](-4.75,-1.25) -- (-6.25,0.25); 
  \draw[] (-5.625,-0.25) -- (-4.875,1.25); 
  \draw[]  (-5.125,1.25) -- (-4.375,-0.25); 
  \draw[]  (-4.375,0.25) -- (-5.125,-1.25);  
  \draw[] (-5.625, 0.25) -- (-4.875, -1.25);
   \node at (-6.8,1) {$\widehat{V}_t$};
   
   \draw[ ultra thick] (4,1) -- (6,1); 
   \draw[ultra thick] (4,-1) -- (6,-1); 
   \draw[ ultra thick]  (4.375,1.25) -- (3.875,0.25); 
   \draw[ ultra thick]  (4.375,-1.25) -- (3.875,-0.25);
    \draw[ ultra thick]  (5.625,1.25) -- (6.125,0.25);
   \draw[ ultra thick]  (5.625,-1.25) -- (6.125,-0.25);

     \draw[ ultra thick]  (4.875,1.25) -- (4.375,-0.25); 
   \draw[]  (4.875,-1.25) -- (4.375,0.25);
    \draw[ ]  (5.125,1.25) -- (5.625,-0.25);
   \draw [ ultra thick]  (5.125,-1.25) -- (5.625,0.25);

 \node at (4.9,0.3) {$\widetilde{C}_{t,1}$};
  \node at (5.1,-0.3 ) {$\widetilde{C}_{t,4}$};
   \node at (3,1) {$\widetilde{W}_t$};
   \end{tikzpicture}
  \caption{The surfaces $\widehat{V}_t$, $\widehat{W}_t$ and $\widetilde{W}_t$. The eight lines on $\widehat{V}_t$ descend to four lines on  $\widehat{W}_t$. The two sets of bold lines on $\widetilde{W}_t$ correspond to two distinct fibers of a conic bundle over $\PP^1$.} \label{fig:3surfaces}
\end{figure}

Statement~(5) follows immediately from Equation \eqref{eq:notdelta_1}, and by~(1) and~(3). In particular, to prove ~\eqref{eq:preimage} one can check by \eqref{eq:newy} that, in the variables of Equation \eqref{eq:widehat_W}, if $t$ is a root of $\delta_1$ then $\phi_1^{-1}(t)=(0:0:2:-1)=p_t$, and if t is a root of $\delta_2$ then $\phi_2^{-1}(t)$ is given by
\[ \left(\sqrt{\frac{at^{2k+1}}{2 \delta_1(t)}}: 1  :\sqrt{\frac{at^{2k+1}}{2 {\delta^3_1(t)}}}: \sqrt{\frac{2{(at^{2k+1})}^3}{{\delta^3_1(t)}}}\right)
\]
which is the point $q_t$ in~\eqref{eq:q_t}.
  
We prove~(6) as follows. Note that the function fields of $Y_1$ and $Y_2$ are $k(Y_1)=K(\sqrt{\delta_1})$ and $k(Y_2)=K(\sqrt{\delta_2})$. Set $L=K(\sqrt{\delta_1},\sqrt{ \delta_2})$ and consider the lattice of Galois field extensions:
\begin{equation*}
\xymatrix{ &L & \\
K\left(\sqrt{\delta_1}\right)\ar@{-}[ur] & K\left(\sqrt{\delta}\right)\ar@{-}[u] & K\left(\sqrt{\delta_2}\right)\ar@{-}[ul] \\
&K\ar@{-}[ul]\ar@{-}[u]\ar@{-}[ur] & }
\end{equation*}
The Galois group $\mathrm{Gal}(L/K) \simeq C_2 \times C_2$ is generated by the two authomorphisms $\sigma_1$ and $\sigma_2$, where:
\[
  {\sigma_1}_{|K\left(\sqrt{\delta_1}\right)}=\mathrm{id} \quad  \sigma_1(\sqrt{\delta_2})=-\sqrt{\delta_2 }\quad \text{and} \quad   {\sigma_2}_{|K\left(\sqrt{\delta_2}\right)}=\mathrm{id} \quad \sigma_2(\sqrt{\delta_1})=-\sqrt{\delta_1}
\]
By~(4), $\mathrm{Pic}(\widehat{W}_L)$ is generated by the classes of the curves $C_1$ and $C_2$, of Equation \eqref{eq:above}. Then by Galois descent $\mathrm{Pic}(\widehat{W}_K)$ has only one generator, given by the class of the curve $C_1+C_2+C_3+C_4 \subset \widehat{W}_K$, since $C_1+C_2+C_3+C_4$ is the only curve in $\widehat{W}_L$ which is invariant with respect to the induced action of $\mathrm{Gal}(L/K)$.

Assertion~(7) follows from the fact that, by~$(1)$ and~$(4)$, $\forall \ t \in \CC^{\times}$ $H^1(\widehat{W}_t, \QQ)=H^3(\widehat{W}_t, \QQ)=(0)$.

To prove~(8), consider a singular value $s\in \Delta$ and the fibre $\widehat{W}_s$. To check that $R^2 \phi_\star \QQ \to j_\star R^2 \phi_{\Omega\, \star} \QQ$ is an isomorphism in a neighbourhood of $s$, it is enough to show that the natural homomorphism 
\[
H^2(\widehat{W}_s, \QQ) \to H^2(\widehat{W}_t, \QQ)^{T_s}
\]
is an isomorphism, where $t\in \CC^\times$ is near $s$, $T_s\colon H^2(\widehat{W}_t, \QQ) \to  H^2(\widehat{W}_t, \QQ)$ 
is the local monodromy operator at $s$, and $H^2(\widehat{W}_t, \QQ)^{T_s}$ denotes the group of monodromy invariants. Indeed on the one hand, by the proper base change theorem, $H^2(\widehat{W}_s, \QQ)$ is the fibre at $s$ of $R^2\phi_\star \QQ$, and on the other hand $H^2(\widehat{W}_t, \QQ)^{T_s}$ is the fibre at $s$ of $j_\star R^2 \phi_{\Omega \, \star}\QQ$.

We have an exact triangle of constructible complexes on $\widehat{W}_s$:
\[
\QQ\to \psi \, \QQ \to \varphi \, \QQ \xrightarrow{+1}
\]
where $\psi$ and $\varphi$ are the nearby and vanishing cycle functors
\cite[Expos\'e~I]{MR0354656}. Since $\widehat{W}_s$ has isolated
hyperquotient singularities --- in fact by (4) it only has one non
quasismooth point --- $\varphi \, \QQ$ is supported at the non
quasismooth point of $\widehat{W}_s$ and is concentrated in
degree~$2$~\cite{MR0239612}. Thus from the exact sequence:
\[
(0)=H^1(\widehat{W}_s, \varphi \, \QQ) \to H^2 (\widehat{W}_s, \QQ)\to H^2(\widehat{W}_s, \psi \, \QQ) =H^2(\widehat{W}_t,\QQ)\to \cdots
\]
we conclude that the natural homomorphism $H^2(\widehat{W}_s, \QQ) \to H^2(\widehat{W}_t,\QQ)$ is injective. By the local invariant cycle Theorem~\cite[Theorem~1.4.1]{2015arXiv150603642D} then $H^2(\widehat{W}_s, \QQ)$ is the group of monodromy invariant cycles in $H^2(\widehat{W}_t,\QQ)$.
\end{proof}

\begin{lem} \label{lem:onthebase}  There is an identity of  mixed Hodge structures: 
\begin{equation} \label{eq:H3-H1} H^3_c (\widehat{W}, \QQ)= H^1 _c (\CC^{\times}, R^2{\phi}_{\star} \QQ)
\end{equation}
  \end{lem}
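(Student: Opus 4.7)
The strategy is to apply the Leray spectral sequence with compact supports for the projective morphism $\phi \colon \widehat{W} \to \CC^\times$. Since $\phi$ is proper we have $R\phi_! = R\phi_\star$, so there is a spectral sequence of mixed Hodge structures
\[
E_2^{p,q} = H^p_c\bigl(\CC^\times, R^q\phi_\star \QQ\bigr) \Longrightarrow H^{p+q}_c(\widehat{W}, \QQ).
\]
(That this is a spectral sequence of mixed Hodge structures follows from Saito's theory, applied to the projective morphism $\phi$.)

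I would then inspect the diagonal $p+q=3$. The base $\CC^\times$ has real dimension $2$, so $H^p_c(\CC^\times,-)$ vanishes for $p\geq 3$; in particular $E_2^{3,0}=0$. By Lemma~\ref{lem:dPstructure}(7), $R^1\phi_\star \QQ = R^3\phi_\star \QQ = 0$, so $E_2^{2,1}=0$ and $E_2^{0,3}=0$. The only potentially nonzero term on this diagonal is
\[
E_2^{1,2}= H^1_c\bigl(\CC^\times, R^2\phi_\star \QQ\bigr).
\]

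Next I would check that no differential enters or leaves $E_r^{1,2}$ for $r\geq 2$. Incoming differentials originate at $E_r^{1-r,\,2+r-1}$, whose first index is negative, hence these groups vanish. Outgoing differentials land in $E_r^{1+r,\,3-r}$. For $r=2$ this is $E_2^{3,1}$, which vanishes because $R^1\phi_\star \QQ=0$. For $r\geq 3$ either $3-r<0$ or $1+r\geq 4>2=\dim_\RR \CC^\times$, so these targets vanish as well. Consequently $E_\infty^{1,2}=E_2^{1,2}$ and all other $E_\infty^{p,q}$ with $p+q=3$ are zero, giving the claimed identity
\[
H^3_c(\widehat{W}, \QQ) = H^1_c\bigl(\CC^\times, R^2\phi_\star \QQ\bigr)
\]
as mixed Hodge structures.

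The proof is essentially a bookkeeping exercise; the only nontrivial input is Lemma~\ref{lem:dPstructure}(7), which eliminates the off-diagonal contributions. The mild subtlety is to make sure the identification respects the mixed Hodge structure, but since $\phi$ is projective this is immediate from the functoriality of Saito's mixed Hodge modules, and no further geometric input is needed.
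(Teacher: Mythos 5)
Your proposal is correct and follows essentially the same route as the paper: the compactly supported Leray spectral sequence for the proper morphism $\phi$, with Lemma~\ref{lem:dPstructure}(7) killing the terms $E_2^{0,3}$ and $E_2^{2,1}$. The only cosmetic difference is that the paper invokes degeneration of the spectral sequence at the second page, whereas you verify directly that no differentials enter or leave $E_r^{1,2}$ --- the same bookkeeping either way.
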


\begin{proof}
Note that the two functors  $\phi_{\star}$ and $\phi_{!}$ concide, since the map $\phi$ is proper.
Consider the Leray spectral sequence of the morphism $\phi$ with second page $E_2^{p,q}=H^p_c( R^q\phi_\star \QQ) \Longrightarrow  H_c^{p+q}(\widehat{W}, \QQ)$, which is known to degenerate at the second page.
The groups contributing to $H^3_c(\widehat{W},\QQ)$ are:
\[
H^0_c(\CC^{\times},  R^3 {\phi}_{\star} \QQ), \  H^1_c(\CC^{\times}, R^2 {\phi}_{\star} \QQ), \ H^2_c(\CC^{\times},R^1 {\phi}_{\star} \QQ)  
\]
Identity \eqref{eq:H3-H1} holds if and only if $H^0_c(\CC^{\times},  R^3 {\phi}_{\star} \QQ)= H^2_c(\CC^{\times}, R^1 {\phi}_{\star} \QQ)=(0)$, and indeed this is so, by Lemma~\ref{lem:dPstructure}$(7)$.
\end{proof}

\begin{lem} 
\label{lem:final}
 There is an isomorphism of mixed sheaves on $\CC^{\times}$: 
\begin{equation} 
\label{fasci} 
p_{\CC^{\times} \star} \QQ_{Y_{\CC^{\times}}} \to R^2 \phi_{\star} \QQ_{\widehat{W}}  (1) 
\end{equation} 
\end{lem}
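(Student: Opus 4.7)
The goal is to construct an algebraic cycle $Z\in\CH_2(Y\times_{\CC^\times}\widehat{W})$ whose induced correspondence realises the isomorphism~\eqref{fasci}.

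\emph{Construction of $Z$.} By Lemma~\ref{lem:dPstructure}(4), each quasismooth fibre $\widehat{W}_t$ ($t\in\Omega$) carries four curves $C_{t,1},\dots,C_{t,4}$, labelled by the pair of signs $(\epsilon_1,\epsilon_2)\in\{\pm\}^2$ in the choices of $\sqrt{\delta_1(t)}$ and $\sqrt{\delta_2(t)}$, with linear equivalences $C_{t,1}\sim C_{t,4}$ and $C_{t,2}\sim C_{t,3}$. Consequently the unordered pair $\{C_{t,1},C_{t,4}\}$ is distinguished from $\{C_{t,2},C_{t,3}\}$ precisely by the product of signs $\epsilon_1\epsilon_2$, equivalently by the value of $\sqrt{\delta_1(t)\delta_2(t)}=\sqrt{\delta(t)}$ --- and a point of $Y$ records exactly this choice. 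To exploit this, pass to the $4{:}1$ common refinement $\widetilde{Y}=Y_1\times_{\CC^\times}Y_2\to\CC^\times$, over which both $\sqrt{\delta_i}$ are global regular functions. Equations~\eqref{eq:above}--\eqref{eq:below}, with $\sqrt{\delta_i}$ replaced by these regular functions, cut out a divisor $\mathcal{U}\subset\widetilde{Y}\times_{\CC^\times}\widehat{W}$ whose fibre over the $i$-th sheet of $\widetilde{Y}$ is the curve $C_{t,i}$. The natural $2{:}1$ morphism $\pi\colon\widetilde{Y}\times_{\CC^\times}\widehat{W}\to Y\times_{\CC^\times}\widehat{W}$ restricts to a birational map on $\mathcal{U}$, and I set $Z=\pi_\ast\mathcal{U}\in\CH_2(Y\times_{\CC^\times}\widehat{W})$. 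By design, the fibre of $Z$ over $(t,\eta^+)\in Y$ (with $\eta^+=+\sqrt{\delta(t)}$) is $C_{t,1}\cup C_{t,4}$, and over $(t,\eta^-)$ it is $C_{t,2}\cup C_{t,3}$.

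\emph{The correspondence is an isomorphism over $\Omega$.} Because $\phi$ is proper and $p_{\CC^\times}$ is finite, the cycle class $[Z]$ induces by pull--push a morphism of constructible sheaves
\[
[Z]_\ast\colon p_{\CC^\times\,\star}\QQ_{Y_{\CC^\times}}\longrightarrow R^2\phi_\star\QQ_{\widehat{W}}(1),
\]
automatically compatible with the weight filtrations. The stalk of the source at $t\in\Omega$ is $\QQ\langle[\eta^+]\rangle\oplus\QQ\langle[\eta^-]\rangle$, while Lemma~\ref{lem:dPstructure}(4) identifies the stalk of the target with $\Pic(\widehat{W}_t)_\QQ=\QQ\langle C_{t,1}\rangle\oplus\QQ\langle C_{t,2}\rangle$. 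By construction $[Z]_\ast[\eta^+]=[C_{t,1}]+[C_{t,4}]=2[C_{t,1}]$ and $[Z]_\ast[\eta^-]=[C_{t,2}]+[C_{t,3}]=2[C_{t,2}]$, so $[Z]_\ast$ is a fibrewise isomorphism over $\Omega$.

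\emph{Extension to $\CC^\times$.} The branch locus of $p_{\CC^\times}$ equals $\Delta=\{\delta_1\delta_2=0\}\cap\CC^\times$, so $\Omega=\CC^\times\setminus\Delta$ is a common open set of regular values. A local computation around $s\in\Delta$ --- the monodromy of the hyperelliptic double cover about $s$ is a transposition, with rank-one invariants matching the rank-one stalk of $p_{\CC^\times\,\star}\QQ$ at $s$ --- shows $p_{\CC^\times\,\star}\QQ=j_\star\bigl(p_{\CC^\times\,\star}\QQ|_\Omega\bigr)$, and Lemma~\ref{lem:dPstructure}(8) is the analogous statement for the right-hand side. Since a morphism between two $j_\star$-extensions is an isomorphism as soon as its restriction to $\Omega$ is, \eqref{fasci} follows.

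The hardest part is the construction of $Z$: no individual $C_{t,i}$ descends from $\widetilde{Y}$ to $Y$, but the specific combination $C_{t,1}\cup C_{t,4}$ does, because $\{C_{t,1},C_{t,4}\}$ is invariant under the Galois involution of $\widetilde{Y}\to Y$ that simultaneously flips both $\sqrt{\delta_i}$. This match is exactly why the hyperelliptic cover $Y$, rather than the larger $\widetilde{Y}$, is the correct mirror curve.
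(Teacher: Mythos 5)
Your proposal is correct and is essentially the paper's own proof: the same correspondence cycle supported on the curves $C_{t,1},\dots,C_{t,4}$, split into the two blocks $C_{t,1}+C_{t,4}$ and $C_{t,2}+C_{t,3}$ labelled by the sheets of the double cover $Y_{\CC^\times}\to\CC^\times$, the same fibrewise check via Lemma~\ref{lem:dPstructure}(4) that these blocks form a basis of $\Pic(\widehat{W}_t)_\QQ$, and the same extension across $\Delta$ by $j_\star$ using Lemma~\ref{lem:dPstructure}(8); your pushforward of $\mathcal{U}$ from $\widetilde{Y}=Y_1\times_{\CC^\times}Y_2$ is just the geometric formulation of the paper's Galois-descent argument, where $Y_\Omega\times_\Omega\overline{Z}$ splits into two components because the generator of $\mathrm{Gal}\bigl(K(\sqrt{\delta_1},\sqrt{\delta_2})/K(\sqrt{\delta})\bigr)$ exchanges $C_1\leftrightarrow C_4$ and $C_2\leftrightarrow C_3$. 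The only step you assert rather than verify is that the branch locus of $p_{\CC^\times}$ is exactly $\{\delta_1\delta_2=0\}$, i.e.\ that $k(Y_{\CC^\times})=\CC(t)\bigl(\sqrt{\delta_1\delta_2}\bigr)$; this is true but is precisely the point where the explicit equation \eqref{eq:curve-a} of the mirror curve enters, and the paper checks it by comparing the branch polynomial of $Y$ with $\delta_1\cdot\delta_2$ from \eqref{eq:delta_12}, so you should include that one-line computation.
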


\begin{proof}
The sketch of the proof is as follow: let $\Omega \subset \CC^{\times}$ and $j \colon \Omega \hookrightarrow \CC^{\times}$ be as in Lemma~\ref{lem:dPstructure} and let $p_{\Omega} \colon Y_{\Omega}=p^{-1}(\Omega) \to \Omega$ be the induced morphism; we first construct a homomorphism 
\begin{equation}
  \label{eq:homomorphism}
z \colon p_{\Omega\, \star} \QQ_{Y_\Omega}\to R^2\phi_{\Omega \, \star} \QQ_{\widehat{W}_\Omega} (1)
\end{equation}
and then we show that it is an isomorphism. This concludes the proof since on the one hand it is obvious that $j_\star p_{\Omega\, \star} \QQ_{Y_\Omega} = p_{\CC^{\times} \star} \QQ_{Y_{\CC^{\times}}}$, on the other hand by Lemma~\ref{lem:dPstructure}(8)  $j_\star R^2\phi_{\Omega \, \star} \QQ_{\widehat{W}_\Omega}=R^2\phi_\star \QQ_{\widehat{W}}$.

Below we construct a cycle 
\[
Z\in \CH_2\left(Y_\Omega \times_\Omega \widehat{W}_\Omega\right)
\]
inducing the homomorphism $z$ via the natural maps:
\begin{multline*}
\CH_2 \left(Y_\Omega \times_\Omega \widehat{W}_\Omega\right) \to 
H^{BM}_4 \left(Y_\Omega \times_\Omega \widehat{W}_\Omega\right)
=\\
=\Hom_{D^b_{cc}} \left(Rp_{\Omega\,\star} \QQ_{Y_\Omega},  R\phi_{\Omega\,\star} \QQ_{\widehat{W}_\Omega}[2](1)\right)\to 
\Hom \left(p_{\Omega\,\star} \QQ_{Y_\Omega},  R^2\phi_{\Omega\,\star} \QQ_{\widehat{W}_\Omega}(1)\right)  
\end{multline*}

Consider the diagram
\[
\xymatrix{
   &Y_\Omega \times_\Omega \widehat{W}_\Omega\ar[dr]^{p_\Omega^\prime}\ar[dl] & \\
  Y_\Omega\ar[dr]_{p_\Omega} & & \widehat{W}_\Omega\ar[dl]^{\phi_\Omega}\\
   & \Omega& }
\]
Let $\overline{Z}\subset \widehat{W}_\Omega$ be the union over all $t\in \Omega$ of the four curves $C_{t,1}, \dots, C_{t,4}$ in the fibre $\widehat{W}_t$ described in Lemma~\ref{lem:dPstructure}, that is:
\[
  \overline{Z}= \left(-(4at^{2k+1}+1)y_1^2 +y_2^2= x_1^4 -\frac{at^{2k+1}}{4at^{2k+1}+1} {(x_1x_2)}^2 +tx_2^4=0\right) /\mu_2  \subset \widehat{W}_\Omega
\]
We have that:

\[
Y_\Omega \times_\Omega \overline{Z} = Z_1+Z_2 \subset Y_\Omega \times_\Omega \widehat{W}_\Omega
\]
is the sum of two irreducible components and we take $Z$ to be one of these components. In order to see this, let $K=\CC(t)$ and $L=K(\sqrt{\delta_1}, \sqrt{\delta_2})$, as in the proof of Lemma \ref{lem:dPstructure}, and consider the field extensions $K \subset K(\sqrt{\delta}) \subset L$.
The key point is to notice that $Y_{\Omega}$ is a $2:1$ cover of $\Omega$ with function field $k(Y_\Omega)=K(\sqrt{\delta})$. Indeed, by \eqref{eq:curve-a} $Y$ is a $2:1$ cover of $\PP^1$ branched at the $6k+4$ roots of the polynomial
\[
t \ (4at^{2k+1}+1) \left( -64a^2t^{2k+1}+a^2t^{4k+1}-32at^{2k+1}-4\right) 
\]
By \eqref{eq:delta_12} this is the polynomial $\delta(t)=\delta_1(t) \cdot \delta_2(t)$. Then, since the generator of the Galois group $\mathrm{Gal}(L/K(\sqrt{\delta}))$ exchanges $C_1$ with $C_4$ and $C_2$ with $C_3$ on $\widehat{W}_L$, by Galois descent the cycle $C_1+C_2+C_3+C_4$ on $\widehat{W}_{k(Y_\Omega)}$ splits into two components $C_1+C_4$ and $C_2+C_3$, each defined over $k(Y_\Omega)$ and corresponding to the two irreducible components $Z_1$ and $Z_2$. Note also that $p^\prime_{\Omega\,\star }Z=\overline{Z}$.

It remains to show that the induced homomorphism $z$ is an isomorphism. This can be checked at the generic point $\eta$, or indeed at any point $t\in \Omega$. This is precisely the statement in Lemma~\ref{lem:dPstructure}$(4)$ that the set $\{C_1+C_4, C_2+C_3\}$ is a basis of $\Pic(\widehat{W}_t)_{\QQ}=H^2(\widehat{W}_t,\QQ)$.
\end{proof}


\appendix 
\section{Toric MMP}
\label{sec:appendix}

Let $W$ and $\widehat{W}$ be the fibre of \eqref{eq:3-fold-a} over $a$
and its partial compactification~\eqref{eq:widehat_W} in
$G=\PP(1,1,2,2)/_{\mu_2} \times \CC^{\times}$, as in Section
\eqref{sec:proof}.
\medskip

In this Appendix, we explain how we discovered the del Pezzo fibration
$\phi \colon \widehat{W} \to \CC^{\times}$, and the
substitution~\eqref{eq:change}, by the methods of the minimal model
program for toric hypersurfaces.

The space $\widehat{W}$ has canonical but not terminal singularities. In Section~\ref{sec:conic-bundle-struct} we construct a birational model $W^\prime$ with terminal singularities and a Mori fibration
$\psi \colon W^\prime \to \FF_1$. 

\medskip

As in Section~\eqref{sec:proof}, we fix once and for all an integer
$k > 0$ and a value $a \neq a_{k,0}$, and we omit all reference to
$k$ and $a$ in what follows.
\smallskip

\paragraph{Notation} We set up our notation for toric varieties. For a lattice $L$, we denote by $L_{\RR}=L \otimes_{\ZZ} \RR$ the
associated real vector space. For a torus $\TT $, we denote by
$M=\mathrm{Hom}(\TT, \CC^\times)$ the character lattice and let
$N=\mathrm{Hom}(M, \ZZ)$.

For a fan $\Sigma \subset N_R$, we denote by $F_{\Sigma}$ the
associated toric variety. We denote by $\rho_i$ the primitive
generators of the $1$-dimensional cones of $\Sigma$, and by
$D_i \subset F_{\Sigma}$ the corresponding divisors.

If $F$ is a proper toric variety and $D=\sum a_i D_i$ is a Weil
divisor on $F$, then we denote by
\begin{equation*} \label{eq:polytope_of_D}
P_D=\bigcap_{i} \{ m \colon \langle \rho_i, m \rangle \geq -a_i  \} \subset M_\RR  
\end{equation*}
the polytope of $D$; it is well known that $P_D\cap M$ is a basis of $H^0(F, D)$.

Given a rational polytope $P\subset M_\RR$, we denote by $\Sigma_P$
the normal fan of $P$. If $P$ is not full dimensional $\Sigma_P$ is a
generalised fan, that is, all the cones of $\Sigma_P$ contain a fixed
vector subspace $\sigma_0$ with associated lattice
$N_0=N\cap \sigma_0$. We denote by $F_P$ the toric variety for the fan
$\Sigma_P/\sigma_0 \subset (N/N_0)_\RR$.
\medskip

We recall two well-known facts about toric varieties which we use repeatedly:

\paragraph{Fact 1}

There is a 1-to-1 correspondence between:
\begin{enumerate}[(I)]
\item the set of polarised toric varieties $(F, D)$, that is, pairs of
  a proper toric variety $F$ and (torus invariant) ample divisor $D$,
  and
\item the set of full-dimensional rational polytopes $P\subset M_{\RR}$.
\end{enumerate}
Given a pair $(F,D)$, the corresponding polyhedron is
$P=P_D$. Conversely, given $P$, we let $F=F_P$ and $D=\sum b_iD_i$
where
$P=\bigcap_{i} \{ m \colon \langle \rho_i, m \rangle \geq -b_i \}$ is
the unique facet presentation of $P$.

\paragraph{Fact 2}

Suppose that $D$ is a nef divisor on a proper toric variety $F$. Let
$P=P_D$ be the polytope of $D$ and $(F_P, D_P)$ the corresponding
polarised pair. Then $(F_P, D_P)$ is an \emph{ample model} of
$(F, D)$, in other words:
\begin{enumerate}[(i)]
\item There is a proper toric morphism $f\colon F\to F_P$, and
\item $D=f^\star D_P$.
\end{enumerate}
\smallskip

\subsection{The toric variety associated to \texorpdfstring{$W$}{W}} Let
$M= \mathrm{Hom}(\TT^4, \CC^{\times})$ be the group of characters of
the torus $\TT^4$ in Equation \eqref{eq:2a} and let $N$ be its dual
lattice. Denote by $\{e_1, e_2, e_3, e_4\}$ the basis of $N$ dual to
the basis of $M$ consisting of the coordinate functions
$u_1, u_2, u_3,u_4$ on $\TT^4$.

In this Section we study the Newton polytope
$P \subset M_{\RR}$ of the polynomial in Equation \eqref{eq:3-fold-a},
defining the variety $W \subset \TT^4$, and the compactification of $W$ in the toric variety
$F=F_P$.

\medskip

The polytope $P$ is a 4-dimensional lattice polytope with six vertices
$u_0=\underline{0}$, $u_i$ for $i \in \{1,2,3,4\}$ and
$u_5=(2,2k+1,2k+1,4k+1)$, and facet presentation:
\begin{equation*}
  \label{eq:polytopeP} P=\bigcap_{i=1}^{8} \{ m \in M_{\RR} \colon \langle \rho_i, m \rangle \geq -a_i\} \subset M_{\RR}
\end{equation*}  where 
$(\rho_i, a_i) \in N \times \ZZ$ are:
\begin{equation}  \label{rho and a}
  \begin{split}
 \left\{ \begin{array}{ll}  
(\rho_i, a_i)=& (e_i,0) \qquad \text{for} \quad  i=1,2,3,4 \\
(\rho_5,a_5) =& \left((4k+1,-1,-1,-1), 1\right)\\
(\rho_6, a_6)=& \left((-1,3,-1,-1), 1\right) \\
(\rho_7, a_7)=& \left((-1,-1,3,-1), 1\right)\\
(\rho_8, a_8)= &\left((-(4k+1),-(4k+1),-(4k+1), 4k+3)),(4k+1)\right)
         \end{array} \right.
     \end{split}
   \end{equation}
   Hence the fan $\Sigma=\Sigma_P$ is a complete fan with six $4$-dimensional
   cones and eight rays generated by the vectors $\rho_i$, and $F$
   is a compact $4$-dimensional toric variety. Note that $P$ does not
   depend on $a$ and so neither do $\Sigma$ and $F$.

   The compactification of $W$ in $F$ corresponds to an element of the linear system
   $|O_{F}(D)|,$ where $D= \sum_{i=1}^{8} a_i D_i$ is the ample divisor of $F$ associated to $P$ (as in Fact $1$ above).
   \smallskip

\begin{figure}[!ht] \centering
\begin{tikzpicture}[scale=0.8][line cap=round,line join=round,>=triangle 45,x=1cm,y=1cm]
   \clip(-9.5,-3) rectangle (16,4);
  \node at (2.7,-0.3) {\small{$\rho_1$}};   
  \node at (4.3,-0.9) {\small{$\rho_2$}};
  \node at(6.26,-0.45) {\small{$\rho_3$}};
  \node at (5.76,-2.74) {\small{$\rho_6$}};
  \node at ((8.7,-0.86) {\small{$\rho_7$}};
  \node at  (0.8,-1.28) {\small{$\rho_5$}}; 
  \node at  (3.74, 1.4) {\small{$\rho_4$}}; 
  \node at  (4.12, 3.5) {\small{$\rho_8$}}; 
   \node at (-6.85,-1.26) {\small{$u_1$}};   
  \node at (-3.4,-2.75){\small{$u_2$}};
 \node at (-1.15,-1.3) {\small{$u_3$}};
 \node at (-4.38,3.45){\small{$u_5$}};
  \node at  (-4.6,0.74)  {\small{$u_4$}};
  \node at   (-4.55,-0.4)  {\small{$u_0$}}; 

\draw [line width=1pt,color=black] (3.06,-0.34)-- (4.6,-0.7);
\draw [line width=1pt,color=black]  (4.6,-0.7)-- (6.26,-0.14);
\draw [line width=1pt,dash pattern=on 1pt off 1pt,color=black] (6.26,-0.14)-- (3.06,-0.34);
\draw [line width=1pt,color=black]  (4.16,1.28)-- (4.6,-0.7);
\draw [line width=1pt,color=black] (4.16,1.28)-- (3.06,-0.34);
\draw [line width=1pt,color=black]  (4.16,1.28)-- (6.26,-0.14);
\draw[line width=1pt,color=black]  (5.76,-2.42)-- (4.6,-0.7);
\draw [line width=1pt,color=black] (6.26,-0.14)-- (8.16,-0.86);
\draw [line width=1pt,color=black]  (8.16,-0.86)-- (5.76,-2.42);
\draw [line width=1pt,color=black]  (3.06,-0.34)-- (1.22,-1.28);
\draw [line width=1pt,color=black]  (1.22,-1.28)-- (4.12,3.1);
\draw[line width=1pt,color=black] (4.12,3.1)-- (4.16,1.28);
\draw [line width=1pt,color=black]  (4.12,3.1)-- (5.76,-2.42);
\draw [line width=1pt,color=black]  (5.76,-2.42)-- (1.22,-1.28);
\draw[line width=1pt,color=black]  (8.16,-0.86)-- (4.12,3.1);
\draw [line width=1pt,dash pattern=on 1pt off 1pt,color=black] (1.22,-1.28)-- (8.16,-0.86);
\draw[line width=1pt,color=black]  (-4.22,-0.56)-- (-6.48,-1.26);
\draw[line width=1pt,color=black]  (-4.22,-0.56)-- (-3.4,-2.42);
\draw [line width=1pt,color=black]  (-4.22,-0.56)-- (-1.6,-1.22);
\draw [line width=1pt,color=black]  (-4.22,-0.56)-- (-4.23,0.7);
\draw[line width=1pt,color=black]  (-4.23,0.7)-- (-4.38,3.1);
\draw [line width=1pt,color=black]  (-4.38,3.1)-- (-6.48,-1.26);
\draw [line width=1pt,dash pattern=on 1pt off 1pt,color=black] (-6.48,-1.26)-- (-1.6,-1.22);
\draw[line width=1pt,color=black] (-1.6,-1.22)-- (-3.4,-2.42);
\draw [line width=1pt,color=black]  (-3.4,-2.42)-- (-6.48,-1.26);
\draw [line width=1pt,color=black]  (-6.48,-1.26)-- (-4.23,0.7);
\draw [line width=1pt,color=black]  (-4.23,0.7)-- (-1.6,-1.22);
\draw[line width=1pt,color=black]  (-1.6,-1.22)-- (-4.377,3.1);
\draw [line width=1pt,color=black]  (-4.35,3.1)-- (-3.37,-2.42);
\draw [line width=1pt,color=black] (-4.23,0.7)-- (-3.40,-2.42);
\begin{scriptsize}
\draw [fill=black] (3.06,-0.34) circle (2.5pt);
\draw [fill=black](4.6,-0.7) circle (2.5pt);
\draw [fill=black] (6.26,-0.14) circle (2.5pt);
\draw[fill=black] (4.16,1.28) circle (2.5pt);
\draw [fill=black] (5.8,-2.42) circle (2.5pt);
\draw [fill=black] (8.16,-0.86) circle (2.5pt);
\draw [fill=black](1.22,-1.28) circle (2.5pt);
\draw [fill=black](4.12,3.1) circle (2.5pt);
\draw[fill=black] (-4.22,-0.56) circle (2.5pt);
\draw [fill=black] (-6.48,-1.26) circle (2.5pt);
\draw [fill=black] (-3.4,-2.42) circle (2.5pt);
\draw [fill=black] (-1.6,-1.22) circle (2.5pt);
\draw [fill=black] (-4.23,0.7) circle (2.5pt);
\draw [fill=black] (-4.377,3.1) circle (2.5pt);
\end{scriptsize}
\end{tikzpicture}
\caption{On the left, a picture of the polytope $P \subset M_{\RR}$;
 on the right, a picture of the normal fan $\Sigma \subset N_\RR$ with rays $\rho_i$ in $N_\RR$.}
\end{figure}
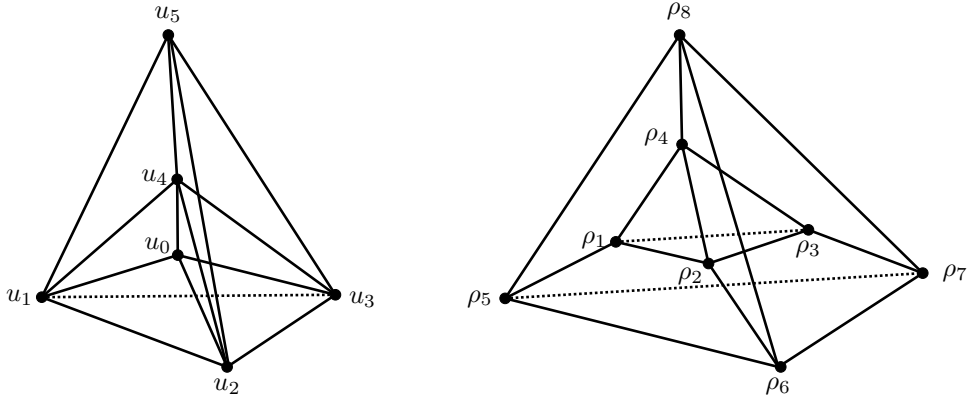

\begin{rem}
  The variety $F$ is not $\QQ$-Gorenstein. Let $F^\prime\to F$ be any
  small birational morphism such that $K_{F^\prime}$ is
  $\QQ$-Cartier. Then $F^\prime$ has noncanonical
  singularities. Indeed, consider for instance the affine open subsets
  $F^\prime_{\tau_1}$ and $F^\prime_{\tau_2}$ corresponding to the
  cones
 \begin{equation*}
   \tau_1 = \langle \rho_5, \rho_8\rangle_+
\quad \text{and} \quad
 \tau_2= 
\langle \rho_4, \rho_8\rangle_+
 \end{equation*}
 of the fan of $F^\prime$. Then 
\begin{equation*}
  F^\prime_{\tau_1} \cong \Gm^2 \times \frac1{4k+2}(1, 1)
\quad \text{and} \quad
  F^\prime_{\tau_2} \cong \Gm^2 \times \frac1{4k+1}(1, 2k)
\end{equation*}
The key point here is that the surface quotient singularities $\frac1{4k+2}(1, 1)$
and $\frac1{4k+1}(1, 2k)$ are not canonical:
the vectors $\frac1{4k+2}(1, 1)$ and $\frac1{4k+1}(1, 2k)$
correspond to valuations with discrepancies $-\frac{4k}{4k+2}$ and
$-\frac{2k}{4k+1}$. The weighted blow ups with weights $\frac1{4k+2}(1, 1)$
and $\frac1{4k+1}(1, 2k)$ are the minimal
canonical partial resolutions of these singularities.

Our aim is to construct a birational model for $F$ with canonical
singularities, as final product of a variant of the minimal model
program. 
\end{rem}

\subsection{A Mori fibre space structure} 

\paragraph{The plan}
In this Section our plan is to: 
\begin{enumerate}[(i)]
\item choose and construct a $\QQ$-Gorenstein partial resolution
  $f\colon \widetilde{F}\to F$ of $F$;
\item run the minimal model
  program with scaling for $(\widetilde{F},\widetilde{D})$, where $\widetilde{D}=f^\star (D)$;
\item study the singularities of the final product $(\overline{F},
  \overline{D})$ and the equation of $W$ in $\overline{F}$. 
\end{enumerate}

\subsubsection{The $\QQ$-Gorenstein partial resolution}


Set $\rho_9 = \frac1{4k+2}(\rho_5+\rho_8)=(0,-1,-1,1)$ and $a_9=1$.
(For $i \in \{1, \dots, 8\}$ let $a_i$ be as in~\eqref{rho and a}.) For
$\varepsilon \geq 0$ consider the polytope:
\begin{equation} \label{def-polytopes}
  P(\varepsilon)=\bigcap_{i=1}^{9} \{ m \in
  M_{\mathbb{R}}: \ \langle \rho_i, m_i\rangle \geq - a_i+
  \varepsilon\} \subset M_{\RR}
\end{equation} 
For $0<\varepsilon \ll 1$, $P(\varepsilon)$ is $4$-dimensional and the presentation in
Equation~\eqref{def-polytopes} is the facet presentation of
$P(\varepsilon)$, and the normal fan of $P(\varepsilon)$ is
independent of $\varepsilon$; denote it by  $\widetilde{\Sigma}$ and let
$\widetilde{F}=F_{P(\varepsilon)}$ be the corresponding toric variety. Let $\widetilde{D}=\sum_{i=1}^{9}a_i D_i$; by Fact~1 above the divisor
\begin{equation} \label{eq:def-divisors} \widetilde{D}(\varepsilon)=
  \sum_{i=1}^{9} (-\varepsilon +a_i) D_i =\varepsilon
  K_{\widetilde{F}} + \widetilde{D}
\end{equation}
is ample on $\widetilde{F}$.

\begin{lem}
  \label{lem:qGpartialresolution}
  \begin{enumerate}[(a)]
  \item $\widetilde{F}$ is $\QQ$-Gorenstein and the obvious map
    $f\colon \widetilde{F}\dasharrow  F$ is a morphism;  
  \item $\widetilde{D}=f^\star (D)$. 
  \end{enumerate}
\end{lem}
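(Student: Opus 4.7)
The plan is to use Fact~2 to produce the morphism $f$ and to identify the pullback $f^\star D = \widetilde{D}$ in one step, after first establishing that $\widetilde{F}$ is $\QQ$-Gorenstein from the ampleness of $\widetilde{D}(\varepsilon)$.

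I would begin by verifying that $P_{\widetilde{D}} = P$. By definition $P_{\widetilde{D}} = \bigcap_{i=1}^{9}\{m \in M_{\RR} : \langle \rho_i, m\rangle \geq -a_i\}$; the first eight inequalities are the defining inequalities of $P$, and the ninth, using $(4k+2)\rho_9 = \rho_5 + \rho_8$ together with $(4k+2)\,a_9 = 4k+2 = a_5 + a_8$, is $(4k+2)^{-1}$ times the sum of the fifth and eighth and hence redundant. For the $\QQ$-Gorenstein property, since~\eqref{def-polytopes} is the facet presentation of $P(\varepsilon)$ for $0 < \varepsilon \ll 1$, Fact~1 shows that $\widetilde{D}(\varepsilon)$ is ample, hence $\QQ$-Cartier, on $\widetilde{F}$. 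On each maximal cone $\sigma$ of $\widetilde{\Sigma}$ there is a unique $m_\sigma(\varepsilon) \in M_\QQ$ with $\langle \rho_i, m_\sigma(\varepsilon)\rangle = -a_i + \varepsilon$ for every ray $\rho_i$ of $\sigma$ (uniqueness because the rays of a maximal cone in a complete fan span $N_\QQ$). Since the defining equations are linear in $\varepsilon$, writing $m_\sigma(\varepsilon) = m_\sigma(0) + \varepsilon\,\delta_\sigma$ yields elements $m_\sigma(0), \delta_\sigma \in M_\QQ$ with $\langle \rho_i, m_\sigma(0)\rangle = -a_i$ and $\langle \rho_i, \delta_\sigma\rangle = 1$, which witness the $\QQ$-Cartier property of $\widetilde{D}$ and of $K_{\widetilde{F}} = -\sum_i D_i$ on $\sigma$, respectively; hence $\widetilde{F}$ is $\QQ$-Gorenstein.

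With $\widetilde{D}$ now known to be $\QQ$-Cartier, the identity $\widetilde{D} = \widetilde{D}(\varepsilon) - \varepsilon K_{\widetilde{F}}$ exhibits $\widetilde{D}$ as a limit in $N^1(\widetilde{F})_\QQ$ of ample classes as $\varepsilon \to 0^+$, so $\widetilde{D}$ is nef. Fact~2 then produces a toric morphism $f\colon \widetilde{F} \to F_{P_{\widetilde{D}}}$ with $\widetilde{D} = f^\star D_{P_{\widetilde{D}}}$; by the polytope computation $F_{P_{\widetilde{D}}} = F_P = F$ and $D_{P_{\widetilde{D}}} = D$, and we obtain simultaneously the morphism required in~(a) and the identification $f^\star D = \widetilde{D}$ in~(b). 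The main delicate point is the separation argument in the $\QQ$-Gorenstein step: one must check that the $\varepsilon$-family $m_\sigma(\varepsilon)$ is genuinely linear in $\varepsilon$, which rests on each maximal cone of $\widetilde{\Sigma}$ (including those inherited from, or created by subdividing, the non-simplicial cones of $\Sigma$) having rays that span $N_\QQ$; this is automatic because $\widetilde{\Sigma}$ is the normal fan of a full-dimensional polytope.
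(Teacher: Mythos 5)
Your argument is correct and follows essentially the same route as the paper's sketch: both rest on the observation that the ninth inequality is redundant, so that $P(0)=P$, and then invoke Fact~2 to obtain the morphism $f$ together with $\widetilde{D}=f^\star D$ in one stroke. The only difference is at the points the paper leaves as assertions (``$\widetilde{D}$ is nef'' and ``one can check explicitly that $\widetilde{F}$ is $\QQ$-Gorenstein''), where your limit-of-ample-classes remark and the affine-in-$\varepsilon$ Cartier data $m_\sigma(\varepsilon)=m_\sigma(0)+\varepsilon\,\delta_\sigma$ supply clean justifications consistent with the paper's intent.
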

\begin{proof}[Sketch of proof]
Note that $\widetilde{D}$ is nef and $\widetilde{D}=\widetilde{D}(0)$.
For $\varepsilon=0$ we have that 
\[P(0)=\bigcap_{i=1}^{9} \{ m \in M_{\mathbb{R}}: \ \langle \rho_i,
m_i\rangle \geq - a_i\}= \bigcap_{i=1}^{8} \{ m \in M_{\mathbb{R}}: \
\langle \rho_i, m_i\rangle \geq - a_i\}= P
\]
where the second equality above follows from the choice $a_9=1$; thus
by Fact~2 above $(F,D)$ is the ample model of
$(\widetilde{F},\widetilde{D})$: in other words there is a morphism
$f\colon \widetilde{F}\to F$ and $\widetilde{D}= f^{\star}(D_P)$.
Since for $0<\varepsilon \ll 1$ the divisor
$\widetilde{D}(\varepsilon)=\varepsilon K_{\widetilde{F}} +
\widetilde{D}$
is ample on $\widetilde{F}$,
$K_{\widetilde{F}}\sim_f 1/\varepsilon \widetilde{D}(\varepsilon)$ is
$f$-ample. Finally, one can check explicitly that $\widetilde{F}$ is
$\QQ$-Gorenstein.
\end{proof}

\subsubsection{The minimal model program}

For $0< \varepsilon \ll 1$ the divisors $\widetilde{D}(\varepsilon)$
are ample on $\widetilde{F}$, since $K_{\tilde{F}}$ is $f$-ample and
$D \subset F$ is ample. 
We recover $\widetilde{F}$
as the toric variety whose spanning fan is the normal fan of the
polytopes $P(\epsilon)$ for small values of $0<\varepsilon$.

The minimal model program with scaling consists of an inductively
defined finite sequence of toric varieties $F_j$, divisors $D_j$, and a
strictly increasing sequence of rational numbers $\varepsilon_j$, and
birational maps:
\[
 F_0 \DashedArrow[densely dashed  ] \cdots \DashedArrow[densely dashed  ] F_j
\overset{t_j}{\DashedArrow[densely dashed  ]} F_{j+1}\DashedArrow[densely dashed  ] \cdots \DashedArrow[densely dashed  ] F_r=\overline{F}
\]
where:
\begin{enumerate}
\item We start with $F_0=\widetilde{F}$, $D_0=\widetilde{D}$, and $\varepsilon_0=
\max \{\varepsilon>0 \mid \varepsilon K_{\widetilde{F}}+\widetilde{D}
\; \text{is nef on $\widetilde{F}$}\}$;
\item For $j\geq 0$, $t_j\colon F_j \dasharrow F_{j+1}$ is the divisorial contraction
or flip of the face $R_j\subset \mathrm{NE} (F_j)$ with
$(\varepsilon_jK_j+D_j)_{|R_j} =0$ and $(K_j+D_j)_{|R_j}<0$;
\item For $j>0$ $D_j$ is the proper transform of $D_{j-1}$ on $F_j$,
  $K_j=K_{F_j}$, and:
\[
\varepsilon_j=\max \{\varepsilon>\varepsilon_{j-1} \mid \varepsilon
K_j+D_j \; \text{is nef on $F_j$}\}
\]
\item The program ends at $F_r=\overline{F}$ where \textbf{either}:
  \begin{description}
  \item[The pair $(\overline{F}, \overline{D})$ is a minimal model] that is
    $\overline{\varepsilon}=1$ and $K_{\overline{F}} + \overline{D}$ is nef; \textbf{or}
  \item[The pair $(\overline{F},\overline{D})$ is a Mori fibre space (Mfs)]
    that is $\overline{\varepsilon}<1$, the 
    contraction $\psi \colon \overline{F} \to S$ of $R_r$ has relative
    dimension $>0$ and $K_{\overline{F}}+\overline{D}$ is $\psi$-ample. 
    In this case $\overline{\varepsilon}$ is the quasi-effective threshold of the pair
    $(\widetilde{F}, \widetilde{D})$:
\[
\overline{\varepsilon}=\sup \{t \mid tK_{\widetilde{F}}+\widetilde{D} \in
\Effbar (\widetilde{F})\}
\]
  \end{description}
\end{enumerate}

By Fact~1, we recover $F_j$ as the toric variety whose spanning fan is the normal
fan of the polytopes $P(\varepsilon)$ for $\varepsilon \in
(\varepsilon_{j-1}, \varepsilon_j)$, since the divisors  $\varepsilon K_j +D_j$ are ample on $F_j$; the threshold values $\cdots
<\varepsilon_j<\varepsilon_{j+1}<\cdots$ are those where the polytope
$P(\varepsilon)$ changes shape. 

\subsubsection{The final product}
Returning to the polytopes of Equation~\eqref{def-polytopes}, $P(1)=
\emptyset$, thus the minimal model program just described will end with a Mfs. The sequence of threshold values is\footnote{We computed the threshold
  values as follows. Consider the
$5$-dimensional lattice $N^\prime=N \oplus \ZZ$ and the dual lattice
$M^\prime$, whose elements are pairs $(m, r)$ with $m\in M$ and $r\in
\ZZ$; also denote by $q \colon M^\prime\to \ZZ$ the projection to the
second factor. Let $\rho'_i=(\rho_i,-1) \in N'$, $i=1, \dots, 10$ and
let $P^\prime$ be the $5$-dimensional polyhedron:
\[
P'=\bigcap_{i=1}^{10}\{ m' \in {(M')}_{\RR} : \langle m',
\rho'_i\rangle \geq -a_i\} \subset {(M')}_{\RR}
\]
Then $\forall \varepsilon$ $P(\varepsilon)=q_{\RR}^{-1}(\varepsilon)\cap P^\prime$ and
the threshold values occur at the vertices of $P^\prime$.  
}:
\[
\frac1{2}, \frac{4k+1}{6k+2}, \frac{2}{3}
\]
Thus $\overline{\varepsilon}=2/3$, 
\[
P\left(\frac{2}{3}\right)=\left[\left(\frac{2}{3},\frac{2}{3},\frac{2}{3},1\right),
 \left(\frac{2}{3},\frac{2k+1}{3},\frac{2k+1}{3},\frac{4k+1}{3}\right) \right]
\]
and $S=\PP^1$. More precisely, the normal fan of  $P\left(2/3 \right)$
has two maximal cones:
\[
  C_1= \{ n \in N_\RR: \ n_2+n_3+2n_4\geq 0\} 
\quad \text{and} \quad 
C_2=\{ n \in N_\RR: \ n_2+n_3+2n_4\leq 0\}
\]
intersecting along the hyperplane $C_0=\{ n \in N_\RR: \ n_2+n_3+2n_4=0\} \subset N_{\RR}$.

For all $\varepsilon \in \left((2k+1)/(3k+2), 2/3\right)$,
$P(\varepsilon)$ is a $4$-dimensional polytope with $9$ vertices and
$7$ facets, whose normals are generated by the vectors $\rho_1, \rho_2, \rho_3, \rho_5, \rho_6, \rho_7,\rho_{10}$;
thus its normal fan $\overline{\Sigma}$ has $7$ rays and $9$ maximal cones, which are listed in Figure \ref{fig:overline-Sigma}.
One can check that the $4$-dimensional toric variety $\overline{F}$ is
$\QQ$-Gorenstein but not $\QQ$-factorial, and has canonical
singularities.

The toric morphism $\psi \colon \overline{F} \to \PP^1$ is induced by the quotient map $N \to N/N_0$, where $N_0=N\cap C_0$, sending
\[(n_1,n_2,n_3,n_4) \mapsto n_2+n_3+2n_4
\]

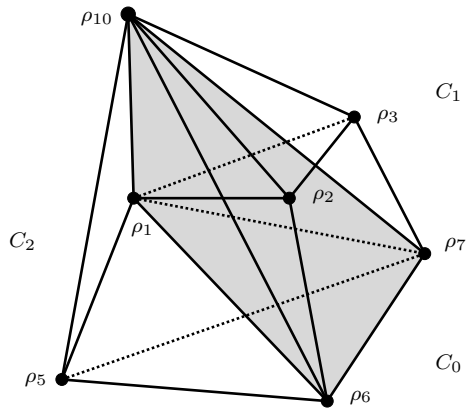
\begin{figure}[!ht] \centering
  \begin{tikzpicture}[scale=0.9][line cap=round,line
    join=round,>=triangle 45]
    \clip(-4,-2.2) rectangle (4,4.8);
    \begin{scriptsize}
      \draw [fill=black] (-6.5,0.5) node {$ \langle \rho_{1},  \rho_{5},  \rho_{6}, \rho_{10} \rangle_{+}$}; 
      \draw [fill=black] (-6.5,0) node {$\langle \rho_{1},   \rho_{5},  \rho_{7}, \rho_{10} \rangle_{+}$};
      \draw [fill=black] (-6.5,-0.5) node {$ \langle \rho_{5}, \rho_{6}, \rho_{7}, \rho_{10} \rangle_{+}$};
      \draw [fill=black] (-6.55,-1) node {$\langle \rho_{1},\rho_{5}, \rho_{6}, \rho_{7}\rangle_{+}$};
      \end{scriptsize}
    
\definecolor{uququq}{rgb}{0.25098039215686274,0.25098039215686274,0.25098039215686274}
\clip(-9,-2.08) rectangle (14,4.9);
\fill[line width=0pt,color=uququq,fill=uququq,fill opacity=0.2] (-1.66,1.44) -- (1.2,-1.56) -- (2.64,0.62) -- (-1.74,4.16) -- cycle; 
\draw[line width=1pt,color=black] (-1.66,1.44)-- (0.64,1.44); 
\draw[line width=1pt,color=black] (0.64,1.44)-- (1.6,2.64);
\draw[line width=1pt,color=black] (1.6,2.64)-- (-1.74,4.16);
\draw[line width=1pt,color=black] (-1.74,4.16)-- (-1.66,1.44);
\draw[line width=1pt,color=black] (-1.74,4.16)-- (-2.72,-1.24);
\draw[line width=1pt,color=black] (-2.72,-1.24)-- (1.2,-1.56);
\draw[line width=1pt,color=black] (1.2,-1.56)-- (2.64,0.62);
\draw[line width=1pt,color=black] (2.64,0.62)-- (-1.74,4.16);
\draw[line width=1pt,color=black] (0.64,1.44)-- (-1.74,4.16);
\draw[line width=1pt,color=black] (-1.74,4.16)-- (1.2,-1.56);
\draw[line width=1pt,color=black] (-1.66,1.44)-- (-2.72,-1.24);
\draw [line width=1pt,color=black, dash pattern=on 1pt off 1pt] (-2.72,-1.24)-- (2.64,0.62);
\draw[line width=1pt,color=black] (0.64,1.44)-- (1.2,-1.56);
\draw [line width=1pt, color=black, dash pattern=on 1pt off 1pt] (-1.66,1.44)-- (2.64,0.62);
\draw[line width=1pt,color=black](1.6,2.64)-- (2.64,0.62);
\draw [line width=1pt, color=black, dash pattern=on 1pt off 1pt](-1.66,1.44)-- (1.6,2.64);
\draw[line width=1pt,color=black] (-1.66,1.44)-- (1.2,-1.56);
\begin{scriptsize} 
\draw [fill=black] (-1.66,1.44) circle (2.5pt);
\draw [fill=black] (-1.53,0.98) node {$\rho_1$};
\draw [fill=black] (0.64,1.44) circle (2.5pt);
\draw [fill=black] (1.15,1.47) node {$\rho_2$};
\draw [fill=black] (1.6,2.64) circle (2.5pt);
\draw [fill=black] (2.1,2.64) node {$\rho_3$};

\draw [fill=black] (-1.74,4.16) circle (3pt);
\draw [fill=black] (-2.2,4.1) node {$\rho_{10}$};
\draw [fill=black] (-2.72,-1.24) circle (2.5pt);
\draw [fill=black] (-3.1,-1.24) node {$\rho_5$};
\draw [fill=black] (1.2,-1.56) circle (2.5pt);
\draw [fill=black] (1.7,-1.5) node {$\rho_6$};
\draw [fill=black] (2.64,0.62) circle (2.5pt);
\draw [fill=black] (3.1,0.75) node {$\rho_7$};

\draw [fill=black] (-3.3,0.8) node {$C_2$};

\draw [fill=black] (3,-1) node {$C_0$};

\draw [fill=black] ((3,3) node {$C_1$};
\end{scriptsize}

\begin{scriptsize} 
   \draw [fill=black] (6.68,3) node {$  \langle \rho_{1}, \rho_{2}, \rho_{3},  \rho_{6}, \rho_{7}\rangle_{+} $};
   \draw [fill=black] (6.75,2.5) node {$ \langle  \rho_{2}, \rho_{3},  \rho_{6}, \rho_{7}, \rho_{10}\rangle_{+}$};
   \draw [fill=black] (6.5,2) node {$\langle \rho_{1},  \rho_{3},  \rho_{7}, \rho_{10}  \rangle_{+}$};
   \draw [fill=black] (6.5,1.5) node {$  \langle \rho_{1}, \rho_{2},  \rho_{6},  \rho_{10}\rangle_{+}$};
   \draw [fill=black] (6.5,1) node {$ \langle \rho_{1}, \rho_{2}, \rho_{3}, \rho_{10}\rangle_{+}$};
  \draw [fill=black] (11,-1) node {};
\end{scriptsize}
\end{tikzpicture}
\caption{A picture of the fan $\overline{\Sigma} \subset N_\RR$ and of the cones $C_1 \ni \rho_2,\rho_3$ and $C_2 \ni \rho_5$. The vectors $\rho_1,\rho_6, \rho_7, \rho_{10}$ span the hyperplane $C_0 \subset N_\RR$, represented by the area coloured in gray. The cone $C_1$ is the union of the $5$ maximal cones of $\overline{\Sigma}$ containing $\rho_2$ or/and $\rho_3$, listed on the right of the picture. The cone $C_2$ is the union of the $4$ maximal cones of $\overline{\Sigma}$ containing $\rho_5$, listed on the left.} \label{fig:overline-Sigma}
\end{figure}

\medskip

The preimage via $\psi \colon \overline{F} \to \PP^1$ of the torus
$\CC^{\times} \subset \PP^1$ is the non-compact toric variety $G$
associated to the subfan $\Delta \subset \overline{\Sigma}$ given by the
cones of $\overline{\Sigma}$ contained in  $C_0$. The fan $\Delta$ has four $3$-dimensional
cones (\emph{see} Figure \ref{fig:overline-Sigma}):
\begin{equation*}
 \langle \rho_6, \rho_1, \rho_{10}
  \rangle_{+} \quad \langle \rho_7, \rho_1,
  \rho_{10}\rangle_{+} \quad \langle \rho_6, \rho_7, \rho_1
  \rangle_{+} \quad  \langle \rho_6, \rho_7, \rho_{10}
  \rangle_{+}
\end{equation*}
Since the quotient $N/N_0$ is torsion-free, we may choose a splitting
$N=N_0 \oplus N_1$ and regard $\Delta$ as a fan $\Delta_0$ in $N_0$;
we have $\Delta=\Delta_0\oplus \{0\}$, thus $
  G = G_{0} \times \CC^{\times}$,  where $G_{0}$ is the $3$-dimensional
toric variety with spanning fan $\Delta_0 \subset N_0$. To determine
$G_0$, note that the vectors $\rho_1, \rho_6, \rho_7, \rho_{10}$ spanning
the hyperplane $C_0$ satisfy the linear relation: $\rho_6+\rho_7 +2\rho_1+2\rho_{10}=0$.
However  $G_0$ is not the weighted projective space $\PP(1,1,2,2)$, as, for instance,  $1/2 \cdot (\rho_6+\rho_1 +\rho_{10}) \in N_0$.
Then write:
\begin{equation*}
N_{00}=\mathbb{Z} \rho_6 + \mathbb{Z} \rho_1 + \mathbb{Z} \rho_{10}
  \quad \text{and} \quad N_0=N_{00} + 1/2 \cdot (1,1,1) \mathbb{Z}
\end{equation*}
We have
$M_{00} \supset M_0=\{m \in M_{00}: \ m_1+m_2+m_3 =0 \ \mathrm{mod} \
2\}$
and $\CC[M_0]$ is the ring of invariants ${\CC[M_{00}]}^{\mu_2}$,
where the group where $\mu_2$ acts on $\CC[M_{00}]$ by
$\xi \cdot \chi^m= \xi^{m_1+m_2+m_3} \chi^m$.  Equivalently $G_0$ is the
quotient variety:
\begin{equation*}
  G_0 =  \PP(1,1,2,2) / \mu_2   
\end{equation*}
Denoting by $x_1,x_2,y_1,y_2$ the weighted homogeneous coordinates of
$\PP(1,1,2,2)$, the $\mu_2$ action has weights $1/2(1,1,1)$ on the two
smooth charts $\{x_1=1\}$ and $\{x_2=1\}$, and $1/4(1,-1,0)$ on the two
singular charts $\{y_1=1\}$ and $\{y_2=1\}$.
Thus $G$ is the toric variety  of Equation~\eqref{eq:G}.
\smallskip

To determine the equation of $W \subset \TT^4$ in $G=\PP(1,1,2,2)/\mu_2
\times \CC^{\times}$, we  choose the basis of $N_\RR$ given by the four vectors
$\rho_6, \rho_1, \rho_{10}, e_2$ and denote the dual
basis by $x$, $y$, $z$, $t$. In terms of $u_1$, $u_2$, $u_3$, $u_4$:
\begin{equation*}  
x=\left(0,0,-\frac{1}{2},-\frac{1}{2}\right) \quad 
y=\left(1,0,-\frac{1}{2},-\frac{1}{2}\right) \quad 
z=\left(0,0,-\frac{1}{2},\frac{1}{2}\right) \quad  
t=\left(0,1,1,2\right)
\end{equation*}
hence 
\[u_1=x^{-1} y \quad  
u_2=x^3z^{-1}t\quad   
u_3=x^{-1}z^{-1}\quad  
u_4=x^{-1}z\]
This is the substitution~\eqref{eq:change}, thus $W$ identifies with the \mbox{$3$-fold} of Equation \eqref{eq:W_new_equation} in $\TT_G$. The del Pezzo fibration $\phi \colon \widehat{W} \to \CC^{\times}$ of Section \ref{sec:proof} is the restriction of the toric morphism $\varphi \colon \overline{F} \to \PP^1$.

\subsection{A conic bundle structure}
\label{sec:conic-bundle-struct}

The del Pezzo fibration $\phi \colon \widehat{W} \to \Gm$ is not a
Mori fibre space in the Mori category: $\widehat{W}$ has strictly canonical
singularities along two sections of $\phi$ corresponding to the
orbifold points $p_t^\pm \in \widehat{W}_t$ (all $t$) of Lemma~\ref{lem:dPstructure}(3).

In this Section we construct a birational model $W^\prime$ with terminal singularities and a
Mori fibration $\psi \colon W^\prime \to \FF_1$ in the Mori
category. In our view, $\widehat{W}$ is a better model in which to
find the group $H^1(Y, \QQ)$.  

\paragraph{Construction}
Consider the toric variety $G^\prime$ with weight matrix\footnote{This
matrix defines an action of $\GG_m^3$ on $\AA^7$ and $G^\prime$ is a
GIT quotient.}:
\[
\begin{array}{ccccccc}
t_1 & t_2 & v_1 & v_2 & z_1 & z_2 & z \\
\hline
1 & 1 & 0 &-1& 0&-k-1&-k \\
0 & 0 & 1 & 1 & 0 & 0   &-2 \\
0 & 0 & 0 & 0 & 1 & 1   & 1
\end{array}
\]
and irrelevant ideal $(t_1, t_2)(v_1,v_2)(z,z_1,z_2)$. Note the morphism $G^\prime \to \FF_1$. Set \[\widetilde{\delta}_1(t_1,t_2)=4at_1^{2k+1} +t_2^{2k+1}\] and consider the hypersurface $W^\prime \subset G^\prime$ given by 
\begin{equation} \label{eq:conicbundle}
-z_1^2+t_2\widetilde{\delta}_1 z_2^2
+v_1v_2z^2\left(\widetilde{\delta}_1 v_1^2-at_1^{2k+1}t_2v_1v_2+t_1t_2\widetilde{\delta}_1 v_2^2\right)=0
\end{equation}

\begin{thm}
  \label{thm:conic_bundle}
  \begin{enumerate}[(A)]
 \item $W^\prime$ has terminal singularities and the morphism $\psi \colon W^\prime \to \FF_1$ is a
  conic bundle Mfs;
   \item There is a commutative diagram:
\[
\xymatrix{\widehat{W}\ar[d]_\phi  \ar@{-->}[r] & W^\prime \ar[d]^{\psi} \\
 \Gm & \FF_1 \ar@{-->}[l]}
\]
  \end{enumerate}
\end{thm}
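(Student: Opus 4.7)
The plan is to read off (A) directly from the equation of $W^\prime$ on the smooth toric ambient $G^\prime$, and to establish (B) by an explicit rational identification between the two toric ambients $G$ and $G^\prime$.

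For part (A), I first observe that the three rows of the weight matrix make $G^\prime$ a $\PP^2$-bundle over $\FF_1$: the columns for $(t_1,t_2,v_1,v_2)$ describe $\FF_1$, while $(z_1,z_2,z)$ are relative coordinates in a weighted $\PP^2$. The defining equation \eqref{eq:conicbundle} is manifestly of degree $2$ in $(z_1,z_2,z)$, so $\psi\colon W^\prime\to\FF_1$ is a conic bundle, and its generic fibre is a smooth $\PP^1$. A direct Jacobian calculation along every toric stratum of $G^\prime$ shows that $W^\prime$ is quasi-smooth, and that wherever the conic fibre degenerates (along the discriminant, which factors into $t_2$, $\widetilde{\delta}_1$, $v_1$, $v_2$ and the quartic cubic factor) the fibre acquires only a node; thus $W^\prime$ has isolated $cA_1$, hence terminal, singularities. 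The Mori fibre space axioms then follow: $-K_{W^\prime}$ restricts to $\oo_{\PP^1}(2)$ on a general fibre, so is $\psi$-ample, and the general fibre being a smooth irreducible conic gives relative Picard number one.

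For part (B), I would exhibit the required birational map by matching the equations under the substitution $t=t_1/t_2$, completing a commutative square over the rational projection $\FF_1\to \Gm$ (forget one $\PP^1$ factor of a birational model of $\FF_1$ and restrict to the open orbit). On the chart $t_2\neq 0$ one has $\widetilde{\delta}_1 = t_2^{2k+1}\delta_1(t)$, so the ``quartic coefficient'' of $v_1v_2z^2$ in \eqref{eq:conicbundle} becomes proportional to the polynomial $\delta_1(t) v_1^2 - at^{2k+1}v_1v_2 + t\delta_1(t)v_2^2$. Setting $v_1=x_1^{2}$, $v_2=x_2^{2}$ converts this into precisely the quartic $\delta_1 x_1^4 - at^{2k+1}x_1^2x_2^2 + t\delta_1 x_2^4$ that appears (up to a factor of $\delta_1$) in the defining equation of $\widehat{W}$ in the coordinates of Lemma \ref{lem:dPstructure}(2); the identifications $z_1\leftrightarrow \delta_1 y_1$, $z_2\leftrightarrow y_2$ (up to the required weighted homogenisation and absorption of the $\mu_2$-quotient into the squaring of the $x$-coordinates) then make the two equations proportional, producing the desired birational map $\widehat{W}\dashrightarrow W^\prime$ compatible with both base projections.

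The main obstacle I expect is the verification of terminality in (A): although the nodal form of the singularities is clear from the shape of \eqref{eq:conicbundle}, one must check this in charts of $G^\prime$ near each stratum of the discriminant, in particular where two components of the discriminant meet, and confirm that no worse singularity appears on $W^\prime$ there. A secondary subtlety in (B) is the bookkeeping for the $\mu_2$-quotient in $G=\PP(1,1,2,2)/\mu_2\times\Gm$, which is absorbed on the $W^\prime$-side by the passage from $(x_1,x_2,y_1,y_2)$ to $(v_1,v_2,z_1,z_2,z)$ via the squaring of the $x$-coordinates; keeping the weights and GIT stability conditions consistent throughout is where the main care is needed.
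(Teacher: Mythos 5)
Your part (B) is essentially the paper's own argument: it works on the charts $(t_2=v_2=z=1)\subset G^\prime$ and $(x_2=1)\subset G$ and maps $v_1,z_1,z_2\mapsto x_1^2,\ \delta_1x_1y_1,\ x_1y_2$. Note the extra factor of $x_1$ compared with your $z_1\leftrightarrow\delta_1y_1$, $z_2\leftrightarrow y_2$: it is exactly what makes these expressions $\mu_2$-invariant functions on the chart of $G$, and what makes the equations match --- with your substitution as written a stray factor $x_1^2x_2^2$ multiplies the quartic part, and it is removed precisely by rescaling $(z_1,z_2,z)$ by $x_1x_2$ using the last row of the weight matrix. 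This is the bookkeeping you flag, and it does work out, so (B) is sound in approach.

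The genuine gap is in (A). First, $G^\prime$ is a \emph{smooth} toric $4$-fold (a $\PP^2$-bundle over $\FF_1$), so quasi-smoothness of $W^\prime$ would mean smoothness; but $W^\prime$ is genuinely singular --- there are singular points above the loci $\widetilde{\delta}_1=v_j=0$, $t_1=v_1=0$, $t_2=v_j=0$ --- so your claim that a Jacobian computation shows $W^\prime$ is quasi-smooth cannot be correct. Second, the inference ``the fibres over the discriminant are nodal, hence $W^\prime$ has isolated $cA_1$ points'' is wrong in both directions: a nodal (rank $2$) fibre over a smooth point of the reduced discriminant gives a \emph{smooth} total space, so nodal degeneration by itself produces no singularity at all; the singular points of $W^\prime$ sit above the points where components of the discriminant $t_2\,\widetilde{\delta}_1\,v_1v_2\,Q$ meet, and there the fibre need not be a node (over $t_2=v_1=0$ the conic drops to the double line $z_1^2=0$), while the singularity type of $W^\prime$ is governed by the local equation of the discriminant at such a point, so a priori one gets $cA_n$ with $n>1$, not automatically $cA_1$. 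What is actually needed --- and what the paper does, tersely --- is a chart-by-chart check that all these points are isolated compound $A$ (hence terminal, since isolated cDV points are terminal); your intermediate claims misstate where the singularities are and why they are mild, so the terminality step is not established. Third, ``the general fibre is a smooth irreducible conic, so the relative Picard number is one'' is not a valid inference (blowing up a section of a conic bundle leaves the general fibre unchanged but raises the relative Picard rank); you need an actual computation, e.g.\ that $\rho(W^\prime)=\rho(G^\prime)=3$, so that $\rho(W^\prime/\FF_1)=1$ together with the $\psi$-ampleness of $-K_{W^\prime}$ gives the Mori fibre space structure.
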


\begin{proof} (A) It is possible, and not too hard, to verify explicitly that $W^\prime$
has isolated $\text{cA}$ singularities.

  (B) We start by exibiting an explicit birational map from a
  chart of $G$ to a chart of $G^\prime$. 

 Consider the chart
  $(t_2=v_2=z=1)\subset G^\prime$; this is isomorphic to $\CC^4$ with
  coordinate functions $t_1, v_1, z_1, z_2$. In this chart,
  $W^\prime$ is given by the equation:
\[
-z_1^2+\delta_1 z_2^2
+v_1\left(\delta_1 v_1^2-at_1^{2k+1} v_1+t_1 \delta_1 \right)=0
\]

Next consider the chart $(x_2=1)\subset G$; this is isomorphic to
$\GG_m \times \frac1{2}(1,1,1)$ with coordinate ring
\[
\CC[t]\otimes \CC[x_1,y_1,y_2]^{\mu_2}
\]
generated by the functions $t, x_1^2, x_1y_1$, \&c. In this
chart, $\widehat{W}$ is given by the equation:
\[
-\delta_1 y_1^2 + y_2^2 + x_1^4-\frac{at^{2k+1}}{\delta_1} x_1^2 +t =0
\]

We define a rational map from the chart in $G$ to the chart in $G^\prime$ by:
\[
v_1, z_1, z_2 \mapsto x_1^2, \delta_1 x_1 y_1, x_1y_2
\]
This map is in fact birational with inverse given by:
\[
x_1^2, x_1y_1, y_1^2, x_1y_2, y_1y_2, y_2^2 \mapsto v_1, \frac{z_1}{\delta_1},
\frac{z_1^2}{v_1 \delta_1^2}, z_2, \frac{z_1z_2}{v_1\delta_1}, \frac{z_2^2}{v_1}
\]
It is easy to see that the equation for $\widehat{W}$ is
transformed into the equation for $W^\prime$.
\end{proof}

\begin{rem}
  The variety $W^\prime$ has many singular points (more
    precisely, above $\widetilde{\delta}_1=v_j=0$, $t_1=v_1=0$,
    $t_2=v_j=0$, $j=1,2$); thus the conic bundle $\psi \colon W^\prime\to \FF_1$,
  although it is a Mori fibre space, is not standard (in
    the birational geometry literature, a \mbox{3-fold} Mfs $f\colon X
    \to S$ is a \emph{standard conic bundle} if $\dim S=2$ and $X$ is
    nonsingular, which implies that $S$ is also nonsingular, all fibres
  are conics, and the discriminant is normal crossing). Hence the
  conjectural rationality criteria
  \cite{MR1404188,MR1098841,MR1181206,MR899398} do not directly apply, though they  suggest that $W^\prime$ is nonrational (and possibly even
  birationally rigid). It would be interesting to study the geometry of
  $W^\prime$ further. 
   \end{rem}

  \begin{rem}
    The conic bundle $\psi \colon W^\prime \to \FF_1$ is an analog of
    a construction that in the singularity theory and mirror symmetry
    literature is called a ``double suspension.'' It is possible, of
    course, to find $H^1(Y,\QQ)$ in $H^3(W^\prime, \QQ)$, but it is
    easier to find it in $H^3(\widehat{W}, \QQ)$.
  \end{rem}


\bibliography{bib_gg}

\end{document}